\pgfplotsset{compat=1.15}
\newcommand{\rge}{\rangle}
\newcommand{\lge}{\langle}
\newcommand{\mc}{\mathcal}
\newcommand{\spec}{\operatorname{Spec}}
\newcommand{\op}{\operatorname{Op}}
\renewcommand{\le}{\leqslant}
\renewcommand{\ge}{\geqslant}
\newcommand{\fullstopbelow}{\makebox[0pt][l]{\,.}}
\newcommand{\commabelow}{\makebox[0pt][l]{\,,}}
\newcommand{\R}{\mathbb R}
\newcommand{\C}{\mathbb C}
\newcommand{\Z}{\mathbb Z}
\renewcommand{\b}{\mathfrak b}
\newcommand{\n}{\mathfrak n}
\newcommand{\g}{\mathfrak g}
\newcommand{\h}{\mathfrak h}
\newcommand{\jet}[1]{ [\hspace{-0,5mm}[ {#1} ]\hspace{-0,5mm}] }
\newcommand{\rjet}[1]{ (\hspace{-0,7mm}( {#1} )\hspace{-0,7mm}) }
\newcommand{\git}{/\!\!/}
\numberwithin{equation}{section}
\theoremstyle{plain}
\newtheorem{theorem}{Theorem}[section]
\newtheorem{definition}[theorem]{Definition}
\newtheorem{proposition}[theorem]{Proposition}
\newtheorem{corollary}[theorem]{Corollary}
\newtheorem{lemma}[theorem]{Lemma}
\newtheorem{remark}[theorem]{Remark}
\newtheorem{example}[theorem]{Example}
\newtheorem{conjecture}[theorem]{Conjecture}
\title[Dynkin automorphism actions on Gaudin algebras]{Dynkin automorphism actions on Gaudin algebras}
\author[Vladyslav Zveryk]{Vladyslav Zveryk}
\address{Faculty of Mathematics and Computer Science\\Jagiellonian University in Kraków\\Łojasiewicza 6, 30-348 Kraków, Poland\\ \mbox{ }\\
{\bfseries Current address}:\\
Department of Mathematics\\
Yale University\\
219 Prospect Street, CT 06511, New Haven, USA}
\email{zverik.vladislav@gmail.com}
\begin{document}
\maketitle
\begin{abstract}

We study the action of Dynkin diagram automorphisms $\sigma$ on generalized Gaudin algebras, focusing in particular on the big Gaudin algebra \( \mathcal{B}(\mathfrak{g}) \subset (U(\mathfrak{g}) \otimes S(\mathfrak{g}))^{\mathfrak{g}} \) and its evaluated versions \( \mathcal{B}^\lambda(\mathfrak{g}) \) and \( \mathcal{B}_\chi(\mathfrak{g}) \). We show isomorphisms between the coinvariants of the generalized Gaudin algebras associated with $\g^\vee$ and the generalized Gaudin algebras associated with $\g_\sigma^\vee$, where $\g_\sigma$ is the fixed point subalgebra. In particular, we get an isomorphism \( \mathcal{B}^\lambda(\mathfrak{g}^\vee)_\sigma \cong \mathcal{B}^\lambda(\mathfrak{g}^\vee_\sigma) \) for any \( \sigma \)-invariant dominant weight \( \lambda \), which allows us to reprove Jantzen’s twining formula. Our approach relies on interpreting generalized Gaudin algebras via spaces of opers, which explains the appearance of the Langlands duals in our results and in Jantzen's twining formula.

\end{abstract}
\section{Introduction}
Let $\g$ be a simple Lie algebra over $\C$. One can give a series of constructions giving various commutative algebras associated to $\g$ called generalized Gaudin algebras (\cite{FFRe}, \cite{FFT}). They attract great attention by arising in quantum integrable systems and geometric Langlands correspondence \cite{FFT}. 

In this paper, we take a closer look at a particular Gaudin algebra $\mc B(\g)\subset (U(\g)\otimes S(\g))^\g$ and its evaluations $\mc B^\lambda(\g)\subset(\operatorname{End}_\C V(\lambda)\otimes S(\g))^\g$ at irreducible $\g$-representations $V(\lambda)$ of highest weights $\lambda$ \cite{Ha}. The algebra $\mc B(\g)$ plays a crucial role in the answer to Vinberg's quantization problem \cite[Section 9.2]{Mo}, parametrizing its solutions at regular elements, while the algebra $\mc B^\lambda(\g)$ is expected to possess significant information about the representation $V(\lambda)$, as explained in \cite{Ha}. When $V(\lambda)$ is a weight multiplicity free representation, the algebra $(\operatorname{End}_\C V(\lambda)\otimes S(\g))^\g$ is commutative itself and is known to encode the structure of $V(\lambda)$, as described in detail in \cite{Pa}. In general, it can be shown that  $B^\lambda(\g)$ is the maximal commutative subalgebra of $(\operatorname{End}_\C V(\lambda)\otimes S(\g))^\g$, so there is a general idea of Tam\'{a}s Hausel in \cite{Ha} that it is a natural candidate for the corresponding algebra for any $\lambda$.

Our goal is to study the actions of Dynkin automorphisms on generalized Gaudin algebras, in particular, on $\mc B(\g)$ and $\mc B^\lambda(\g)$. The main motivation for our work is the following conjecture:

\begin{conjecture}[Conjecture 4.1, \cite{Ha}]\label{introconj}
    There exists an isomorphism 
    $$
    \mc B^\lambda(\g^\vee)_\sigma\simeq \mc B^\lambda(\g_\sigma^\vee),
    $$
    where $\g_\sigma$ is the subalgebra of $\g$ fixed by $\sigma$, $\g^\vee$ and $\g_\sigma^\vee$ are the corresponding Langlands duals, and $\mc B^\lambda(\g^\vee)_\sigma$ denotes the quotient of $\mc B^\lambda(\g^\vee)$ by the ideal generated by $x-\sigma(x)$ for all $x\in \mc B^\lambda(\g^\vee)$.
\end{conjecture}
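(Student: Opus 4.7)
The plan is to route the statement through the Feigin--Frenkel description of the Gaudin algebra $\mc B(\g^\vee)$ as an image of the center $\mathfrak z(\widehat{\g^\vee})$ of the affine Kac--Moody vacuum module at critical level, thereby reducing the conjecture to a folding statement for the associated classical $\mathcal W$-algebra. A Dynkin automorphism $\sigma$ of $\g$ lifts canonically to $\widehat{\g^\vee}$ and descends to its center; crucially, folding $\widehat{\g^\vee}$ by $\sigma$ produces a twisted affine Kac--Moody algebra whose Langlands dual is the untwisted affinization of $\g_\sigma^\vee$. It is this interplay between folding and Langlands duality at the affine level that accounts for the precise form of the conjecture and should drive the proof.

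\textbf{Construction and verification.} Concretely, I would begin with canonical generators of $\mc B(\g^\vee)$: the Feigin--Frenkel--Reshetikhin Segal--Sugawara generators, or the Mishchenko--Fomenko elements produced by the argument-shift method. Both families are built from adjoint-invariant polynomials on $\g^\vee$, and $\sigma$ acts by permuting or rescaling them in accordance with the Dynkin diagram action. Averaging over $\langle\sigma\rangle$ should, under the folding identification, land inside $\mc B(\g^\vee_\sigma)$; this yields a homomorphism $\Phi\colon\mc B(\g^\vee)\twoheadrightarrow \mc B(\g^\vee_\sigma)$ annihilating each $x-\sigma(x)$, which therefore factors through the coinvariants and, upon evaluation at a $\sigma$-invariant highest weight $\lambda$, produces the candidate morphism $\overline\Phi\colon\mc B^\lambda(\g^\vee)_\sigma\to\mc B^\lambda(\g^\vee_\sigma)$. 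Surjectivity would follow from explicit generator matching; injectivity from a dimension count, where the two sides are controlled, respectively, by $\sigma$-twisted characters of $V(\lambda)$ and by ordinary characters of $V_{\g_\sigma^\vee}(\bar\lambda)$, and Jantzen's twining formula provides exactly the required numerical identification.

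\textbf{Main obstacle.} The hardest step is the passage from invariants, where the folding of affine structures is classical, to coinvariants, the form in which the conjecture is phrased: on the infinite-dimensional graded pieces of $\mathfrak z(\widehat{\g^\vee})$ these two functors no longer agree, so one cannot appeal naively to invariant theory. Producing explicit elements of $\mc B(\g^\vee)$ whose $\sigma$-averages actually generate $\mc B(\g^\vee_\sigma)$ (and not merely lie inside it) is the delicate point. I expect the triality case for $D_4$ to require the most care, because the twisted affine object $D_4^{(3)}$ is Langlands dual to $G_2^{(1)}$ and the correspondence between their Sugawara generators is intricate. A type-by-type verification of this correspondence on generators, combined with the dimension constraint supplied by the twining formula, should then close the argument.
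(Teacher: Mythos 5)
Your proposal points in the right general direction at the level of motivation, but it has several genuine gaps that would prevent it from becoming a proof, and it misses the central mechanism the paper actually uses.

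First, the construction of $\Phi$. You propose to produce $\Phi\colon\mc B(\g^\vee)\to\mc B(\g_\sigma^\vee)$ by ``averaging over $\langle\sigma\rangle$''. Averaging is a linear projector, not a ring homomorphism, so it does not manufacture a surjection of commutative algebras, and there is no reason the $\sigma$-average of a Segal--Sugawara generator of $\mathfrak z(\widehat{\g^\vee})$ should land in (let alone generate) $\mathfrak z(\widehat{\g_\sigma^\vee})$. What actually makes the construction work in the paper is geometric: the isomorphism $\spec\mathfrak z(\widehat\g)\simeq\op_{\g^\vee}(D)$ is $\sigma$-equivariant (this is Theorem \ref{t:actionsonffcenter}, proved via geometric Satake and the $\sigma$-invariance of the critical level line bundle, Lemma \ref{l:linebundlesareinvariant}), and the fixed locus $\op_\g(X)^\sigma$ is \emph{canonically} $\op_{\g_\sigma}(X)$, with the map being the natural extension of structure group, not an averaging (Theorem \ref{t:sigma-invariantopers}). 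Passing to functions and using that $\mathrm{Fun}(Y^\sigma)=(\mathrm{Fun}\,Y)_\sigma$ for affine $Y$ gives the coinvariant/Langlands-dual identification directly at the universal level, with no invariants-vs.-coinvariants issue to resolve by hand. Your appeal to twisted affine algebras and affine Langlands duality is conceptually in the right neighborhood (it is what the oper picture encodes), but as stated it is a heuristic, not an argument.

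Second, the descent to $\mc B^\lambda$. The observation that ``upon evaluation at a $\sigma$-invariant highest weight $\lambda$'' one gets $\overline\Phi\colon\mc B^\lambda(\g^\vee)_\sigma\to\mc B^\lambda(\g_\sigma^\vee)$ hides a real step: one must show that an element of $\mc B(\g^\vee)$ whose image in $\operatorname{End}V(\lambda)\otimes S(\g^\vee)$ vanishes is sent to an element whose image in $\operatorname{End}W(\lambda)\otimes S(\g_\sigma^\vee)$ vanishes. The paper does this via a density argument through the evaluated algebras $\mc B_\chi^\lambda$ at compatible pairs $(\chi,\chi')$ (using the monodromy-free oper spaces $\op_{G}^{\mathrm{MF}}(\mathbb P^1)_{0,\infty,\pi(-\chi)}^{1,2,\lambda}$ and the genericity from Corollary \ref{cor:genericcomppair}). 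This is not automatic and needs to be carried out.

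Third, and most importantly, your injectivity argument is circular. You propose a dimension count where the left side is controlled by $\operatorname{tr}(\sigma|V(\lambda))$ and the right by $\dim W(\lambda)$, matched by Jantzen's twining formula. But the relevant invariant of $\mc B^\lambda(\g^\vee)_\sigma$ (its generic rank over $R_\sigma=S(\g_\sigma^\vee)^{\g_\sigma^\vee}$) is the number of $\sigma$-\emph{fixed} joint eigenvalues of $\mc B_\chi^\lambda(\g^\vee)$ on $V(\lambda)$, which is an unsigned count. Jantzen's formula gives the trace, a signed count. These coincide precisely when $\sigma$ has no $-1$-eigenvectors among the joint eigenvectors --- and the paper derives that absence as a \emph{consequence} of the theorem in its closing remark, not as an input. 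So using Jantzen here presupposes what needs to be proved. The paper's actual injectivity argument is algebraic and does not use Jantzen at all: it embeds $\mc B^\lambda(\g^\vee)$ into a finite free $R$-module via restriction to the Kostant section (Lemma \ref{l:restrofinvtokostant}), establishes base-change compatibility for generic $\chi$ (Lemma \ref{l:basechangeofbig}), and kills the kernel $I$ of $\mc B^\lambda(\g^\vee)_\sigma\twoheadrightarrow\mc B^\lambda(\g_\sigma^\vee)$ using a Cayley--Hamilton/Nakayama-style argument combined with torsion-freeness and the splitting of Lemma \ref{l:splitofmaps}.

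In short: the two technical ingredients you would need --- a genuine algebra homomorphism at the universal level and a non-circular injectivity argument --- are both missing, and they are exactly where the paper spends its effort.
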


This conjecture, in turn, is motivated by Jantzen's twining formula (\cite{Ja}, \cite{HS}) which asserts that for any $\sigma$-invariant dominant weight $\lambda$ and any $\sigma$-invariant weight $\mu$ of $\g^\vee$ we have
$$
    \mathrm{tr}(\sigma|V_\mu(\lambda))=\dim W_\mu(\lambda),
    $$
where $V_\mu(\lambda)$ and $W_\mu(\lambda)$ denote the corresponding weight spaces of the corresponding highest weight representations of $\g^\vee$ and $\g_\sigma^\vee$, respectively. It is expected in \cite{Ha} that Conjecture \ref{introconj} implies this formula.

We give a proof of Conjecture \ref{introconj}, as well as of similar isomorphisms for other types of generalized Gaudin algebras and the Feigin-Frenkel center (\cite{FFRe}, \cite{FFR},\cite{Mo},\cite{FFT}). We also support our results by arguments on the naturality of the isomorphism constructed and derive Jantzen's twining formula from this result.

The paper is organized as follows. Sections 2-4 are devoted to preliminaries, covering all the necessary information about the generalized Gaudin algebras, Dynkin automorphisms and opers. We included more information than is needed for the main results of the paper to give a broader introduction to the theory, supporting it with relevant references for further reading.

In Section 5, the actions of Dynkin automorphisms on the spaces of opers are described, while in Section 6 all the information presented is combined to produce isomorphisms between some generalized Gaudin algebras associated to $\g^\vee$ and $\g_\sigma^\vee$, including the Feigin-Frenkel center. 

In Section 7, we take a closer look at the constructed isomorphisms, showing the compatibility of these isomorphisms with representation theories of $\g^\vee$ and $\g_\sigma^\vee$.

In Section 8, we finally prove Conjecture \ref{introconj} and derive Jantzen's twining formula from it.
\section{Generalized Gaudin algebras}\label{s:gaudinalg}
We refer to \cite[Section 2]{FFT} for details.
\subsection{Feigin-Frenkel center}
Let $\g$ be a simple Lie algebra. Let $\kappa$ be the {\bfseries critical level}, which is minus one half times the Killing form of $\g$. The affine Kac-Moody algebra $\widehat \g$ is the extension of $\g\rjet t=\g\otimes \C\rjet t$ by the one-dimensional center $\C\mathbf 1$:
$$
0\to \C\mathbf 1\to \widehat\g\to \g\rjet t\to 0,
$$
with the Lie bracket
$$
[A[m],B[n]]=[A,B][m+n]-n\delta_{-m,n}\kappa(A,B)\cdot \mathbf 1,
$$
where $A[m]$ denotes $A\otimes t^m$ for $A\in\g$ and $m\in\Z$. This formula can be equivalently written as
$$
[A\otimes f(t),B\otimes g(t)]=[A,B]\otimes fg-\kappa(A,B)\operatorname{Res}_{t=0}fg'\cdot \mathbf 1.
$$

Let $\widehat \g_+$ be the Lie subalgebra $\g\jet t\oplus \C\mathbf 1$ of $\widehat \g$. Define the $\widehat\g$-module
$$
\mathbb V_0:=\operatorname{Ind}_{\widehat \g_+}^{\widehat \g}\C=U(\widehat \g)\otimes_{U(\widehat \g_+)}\C,
$$
where $\C$ is the $\widehat\g_+$-module on which $\g\jet t$ acts trivially and $\mathbf 1$ acts as the identity. Note that the element $\mathbf 1:=1\otimes 1\in \mathbb V_0$ is a highest weight vector of this $\widehat \g$-representation annihilated by $\g\jet t$. The space $\mathbb V_0$ has a vertex algebra structure, with which $\mathbb V_0$ is called the {\bfseries universal affine vertex algebra associated with $\g$ (and $\kappa$)}.  For more details, the reader may consult \cite[Section 2.4]{FBZ} or \cite[Section 6.2]{Mo}, but we will not use the vertex algebras approach in this paper.
\begin{definition}
    The center of $\mathbb V_0$ as a vertex algebra is denoted by $\mathfrak z(\widehat \g)$ and is called the {\bfseries Feigin-Frenkel center}.
\end{definition}

Denote by $\widehat\g_-$ the subalgebra $t^{-1}\g[t^{-1}]$ of $\widehat \g$. By the Poincar\'{e}-Birkhoff-Witt theorem, we get an isomorphism of $\widehat\g_-$-modules
$$
U(\widehat\g_-)\simeq U(\widehat \g)\otimes_{U(\widehat \g_+)}\C=\mathbb V_0
$$
induced by the inclusion $U(\widehat\g_-)\hookrightarrow U(\widehat \g)$, where $\widehat\g_-$ acts on both sides by left multiplication. Let $X_1,\ldots,X_n$ and $X^1,\ldots,X^n$ be bases of $\g$ dual with respect to $\kappa$. 
\begin{theorem}[Equivalent descriptions of the Feigin-Frenkel center]\
\begin{itemize}
    \item[(i)] It is the center of the vertex algebra $\mathbb V_0$.
    \item[(ii)] It equals $\mathbb V_0^{\g\jet t}$ as a vector subspace of $\mathbb V_0$.
    \item[(iii)] After the vector space identification $\mathbb V_0\simeq U(\widehat\g_-)$, it is the subalgebra of $U(\widehat\g_-)$ equal to the centralizer of $\sum_{i}X_i[-1]X^i[-1]$.
    \item[(iv)] The inclusion $\mathfrak z(\widehat \g)\subset U(\widehat\g_-)^{\mathrm{op}}$ as algebras is isomorphic to the inclusion of algebras
    $$
    \operatorname{End}_{\widehat \g}\mathbb V_0\subset \operatorname{End}_{\widehat \g_-}\mathbb V_0.
    $$
\end{itemize}    
\end{theorem}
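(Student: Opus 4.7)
The plan is to establish the four descriptions in a chain, drawing on three rather distinct circles of ideas: vertex algebra generalities, the Feigin--Frenkel critical-level calculation, and an induced-module identification. Throughout, one works with the vertex algebra structure on $\mathbb V_0$ only as a bookkeeping device, consistently with the paper's preference.

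For (i) $\Leftrightarrow$ (ii), I would invoke the general vertex algebra principle that in a vertex algebra strongly generated by a subset $S$, an element $z$ is central if and only if $a_{(n)} z = 0$ for every $a \in S$ and every $n \geq 0$; this follows from the commutator formula $[a_{(m)}, z_{(n)}] = \sum_{k \geq 0} \binom{m}{k}(a_{(k)}z)_{(m+n-k)}$. Since $\mathbb V_0$ is strongly generated by the image of $\g$ and, under the inclusion $\g \hookrightarrow \mathbb V_0$, the modes $X_{(n)}$ of the associated field act on $\mathbb V_0$ as the operators $X[n] \in \widehat\g$, centrality of $z$ translates exactly to annihilation by $\g\jet t$ (the central element $\mathbf 1$ acting as the identity imposes no further condition).

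For (ii) $\Leftrightarrow$ (iii), after the PBW identification $\mathbb V_0 \simeq U(\widehat\g_-)$, the operator $X[0]$ acts as the adjoint action of $X \in \g$, while for $n \geq 1$ the Kac--Moody commutation relations turn $X[n]$ into a contraction-type operator on $U(\widehat\g_-)$ involving the cocycle. The essential step, and the main obstacle, is the theorem of Feigin and Frenkel that at the critical level the entire collection of conditions $X[n] \cdot z = 0$ for $X \in \g$, $n \geq 0$, collapses to a single commutation relation $[z, S] = 0$ with the Segal--Sugawara element $S = \sum_i X_i[-1]X^i[-1]$. One inclusion --- that $\mathfrak z(\widehat\g)$ centralizes $S$ --- is automatic from (i), since $S \in \mathbb V_0$; the reverse inclusion rests on the vanishing of the Sugawara central charge at level $\kappa$ together with a filtration argument identifying the associated graded of each side with $S(\g[t^{-1}])^{\g\jet t}$, and constitutes the genuine Feigin--Frenkel input.

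Finally, for (iv), since $\mathbb V_0 = U(\widehat\g) \otimes_{U(\widehat\g_+)} \C$ is an induced module, the map $\varphi \mapsto \varphi(\mathbf 1)$ identifies $\operatorname{End}_{\widehat\g_-}\mathbb V_0$ with $U(\widehat\g_-)^{\mathrm{op}}$, with $\varphi$ recovered as right multiplication by $\varphi(\mathbf 1)$. Such a $\varphi$ is $\widehat\g$-linear if and only if it additionally commutes with the left action of $\g\jet t$; evaluating the commutativity $\varphi(X[n]\cdot \mathbf 1) = X[n]\cdot \varphi(\mathbf 1)$ forces $\g\jet t \cdot \varphi(\mathbf 1) = 0$, i.e., $\varphi(\mathbf 1) \in \mathbb V_0^{\g\jet t} = \mathfrak z(\widehat\g)$ by (ii), while the converse is a short direct verification using the PBW identification and the commutation relations of $\widehat\g$. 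This realizes the claimed isomorphism of inclusions of algebras.
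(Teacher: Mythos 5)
Your proposal is correct and follows essentially the same route as the paper: the paper simply cites \cite{Mo} for item (ii) and \cite[Theorem~1]{R2} for item (iii), and for (iv) gives precisely your Frobenius--reciprocity argument (determine $\varphi$ by $\varphi(\mathbf 1)$, then observe the image ranges over $\mathbb V_0^{\g\jet t}$), so your exposition simply unpacks the cited facts and adds the vertex-algebra derivation of (i) $\Leftrightarrow$ (ii). One small caveat on your (ii) $\Leftrightarrow$ (iii) sketch: the assertion that $\mathfrak z(\widehat\g)$ centralizing $S$ is ``automatic from (i), since $S\in\mathbb V_0$'' glosses over the point that the associative product of $U(\widehat\g_-)$ is not the vertex-algebra $(-1)$-product in general. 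The two do agree when the right-hand factor lies in the center (because all the correction modes $X[n]$, $n\ge 0$, annihilate it), or alternatively one can read commutativity of $\mathfrak z(\widehat\g)$ inside $U(\widehat\g_-)^{\mathrm{op}}$ off your own argument for (iv); either way, it deserves a sentence rather than being declared automatic.
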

\begin{proof}
    Item $(i)$ is the definition. Item $(ii)$ follows from \cite{Mo}, the discussion after Definition 6.2.1. Item $(iii)$ follows from \cite[Theorem 1]{R2}. 

    The identifications in $(iv)$ are done as follows. Endomorphisms of $\mathbb V_0$ which are $\widehat\g$-invariant are completely determined by images of the highest weight vector $\mathbf 1\in \mathbb V_0$, and these images can be chosen arbitrarily from the set of $\g\jet t$-invariant elements of  $\mathbb V_0$. This gives an isomorphism 
    $$
    \mathfrak z(\widehat \g)\simeq \operatorname{End}_{\widehat \g}\mathbb V_0
    $$
    of vector spaces. Similarly, elements of $U(\widehat\g_-)^{\mathrm{op}}$ act on $\mathbb V_0\simeq U(\widehat\g_-)$ by right multiplication, whence the isomorphism
    $$
    U(\widehat\g_-)^{\mathrm{op}}\simeq \operatorname{End}_{\widehat \g_-}\mathbb V_0.
    $$
   
\end{proof}

The Feigin-Frenkel center possesses a beautiful structure. First of all, there are derivations $\tau=-\partial_t$ and $t\tau=-t\partial_t$ on $U(\widehat\g_-)$, which restrict to derivations on $\mathfrak z(\widehat \g)$. The derivation $t\tau$ acts on $U(\widehat\g_-)$ diagonalizably with integral eigenvalues, hence induces gradings on $U(\widehat\g_-)$ and $\mathfrak z(\widehat \g)$.

Let us recall the structure of the $G$-invariant polynomials on $\g$:
\begin{theorem}[Chevalley's theorem, \cite{Ko}, \cite{Ch}]\label{t:Chevalley}
    Let $\h$ be a Cartan subalgebra of a reductive Lie algebra $\g$ and $W$ be the corresponding Weyl group. Then the restriction map $S(\g^*)^G\to S(\h^*)^W$ is an isomorphism. Moreover, there exist algebraically independent homogeneous polynomials $P_1,\ldots, P_l\in S(\g^*)^G$ that generate $S(\g^*)^G$. For any such choice of $P_1,\ldots, P_l$ of respective degrees $d_1\le d_2\le\ldots\le d_l$ the following is true:
    \begin{enumerate}
        \item $l=\mathrm{rk}\,\g$ and $d_i$ don't depend on the choice of generators.
        \item $\sum_id_i=\dim \b$, where $\b$ is any Borel subalgebra of $\g$.
    \end{enumerate}
\end{theorem}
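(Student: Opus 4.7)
The plan is to prove the three assertions in sequence, since each depends on the previous.

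\textbf{First stage: restriction isomorphism.} The map is well-defined because $W$ is realised as $N_G(\h)/Z_G(\h)$ acting on $\h$, so any $G$-invariant polynomial restricts to a $W$-invariant one. For injectivity, I use that the $G$-saturation $G\cdot \h$ contains every regular semisimple element of $\g$, and the regular semisimple locus is Zariski-dense; hence a $G$-invariant polynomial vanishing on $\h$ vanishes on $\g$. For surjectivity, given $q \in S(\h^*)^W$, one defines a function on the regular semisimple locus by $Q(g\cdot h):=q(h)$ (well-defined by $W$-invariance and the fact that two regular elements of $\h$ are $G$-conjugate iff $W$-conjugate), then shows that $Q$ extends to a polynomial on all of $\g$. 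The most efficient route is a graded-piece argument: show the restriction map is injective in each degree, then match dimensions via a Kostant-style transverse slice or by exhibiting explicit preimages for a generating set of $S(\h^*)^W$.

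\textbf{Second stage: polynomiality.} Thanks to the first stage, it suffices to show that $S(\h^*)^W$ is a polynomial ring. Since $W$ is generated by the simple reflections through the root hyperplanes, the Chevalley–Shephard–Todd theorem applies and yields a set of algebraically independent homogeneous generators $P_1,\ldots,P_l$.

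\textbf{Third stage: numerical invariants.} The number $l$ equals the Krull dimension of $S(\h^*)^W$, which is $\dim\h = \mathrm{rk}\,\g$. Independence of the degree multiset $\{d_1,\ldots,d_l\}$ follows by comparing the Hilbert series of $S(\h^*)^W$, which forces
\[
\prod_{i=1}^l \frac{1}{1-t^{d_i}}
\]
to be intrinsic. For $\sum_i d_i = \dim\b$, I would invoke the classical identity $\sum_i(d_i-1) = |\Phi^+|$, the number of positive roots; this is extracted from the coinvariant algebra $S(\h^*)\otimes_{S(\h^*)^W}\C$, which carries the regular representation of $W$ and has top nonzero degree equal to $|\Phi^+|$. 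Adding $\mathrm{rk}\,\g$ on each side gives $\sum_i d_i = |\Phi^+| + \mathrm{rk}\,\g = \dim\n + \dim\h = \dim\b$.

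\textbf{Main obstacle.} The substantive step is the Chevalley–Shephard–Todd polynomiality: it requires combining the reflection-group structure of $W$ with a non-trivial commutative-algebra argument comparing Hilbert series of $S(\h^*)$ over its invariant subring. Everything else — the restriction theorem and the numerical identities — is either a density argument or a direct consequence of the polynomial structure.
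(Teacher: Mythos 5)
The paper does not supply its own proof of this statement: Theorem~\ref{t:Chevalley} is stated as a classical result with citations to Kostant and Chevalley, and no argument is given in the text. So there is no internal proof to compare yours against; I can only assess your outline on its own merits.

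Your outline follows the standard textbook route and is broadly correct: identify $S(\g^*)^G$ with $S(\h^*)^W$, apply Chevalley--Shephard--Todd to get polynomiality, read off $l=\mathrm{rk}\,\g$ from Krull dimension and the degree multiset from the Hilbert series, and derive $\sum_i d_i=\dim\b$ from $\sum_i(d_i-1)=|\Phi^+|$. Two points are worth tightening. First, the surjectivity of the restriction map $S(\g^*)^G\to S(\h^*)^W$ is the genuinely delicate half of the Chevalley restriction theorem; your sketch (``extend $q$ from the regular semisimple locus, then show it is polynomial'' or ``exhibit explicit preimages'') gestures at two different strategies without carrying either through, and the density argument that makes injectivity easy does not by itself give surjectivity --- one really needs either the weight-theoretic argument (traces of finite-dimensional representations generate $S(\h^*)^W$) or a Kostant-section / flatness argument. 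Second, your claim that the coinvariant algebra ``carries the regular representation of $W$ and has top nonzero degree $|\Phi^+|$'' is correct but is itself a nontrivial theorem of Chevalley; as stated it risks looking circular inside a proof of Chevalley's theorem, so it would be cleaner to cite the identity $\sum_i(d_i-1)=|\Phi^+|$ directly (e.g.\ via the factorization of the Poincar\'e polynomial of $W$, or by computing the Jacobian of the generators against the product of positive roots). Finally, the theorem as stated is for \emph{reductive} $\g$, not merely semisimple; this is harmless because one splits off the center, but a complete proof should say so.
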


The generalization of this result for $\mathfrak z(\widehat\g)$ is the following: 
\begin{definition}
    Let $l$ be the rank of $\g$. A set of homogeneous elements $S_1,\ldots, S_l\in \mathfrak z(\widehat \g)$ is called a {\bfseries complete set of Segal-Sugawara vectors} if $\mathfrak z(\widehat \g)$ is a polynomial algebra over $\C$ freely generated by $\tau^kS_i$ for $i=1\ldots l$ and $k\ge 0$.
\end{definition}
\begin{theorem}[Feigin-Frenkel theorem]\label{t:FF}
    A complete set $S_1,\ldots,S_l$ of Segal-Sugawara vectors exists for every reductive Lie algebra $\g$. Moreover, the list of degrees of $S_1,\ldots,S_l$ coincides with the list of degrees of homogeneous invariant generators of $S(\g)^\g$.
\end{theorem}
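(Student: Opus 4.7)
The plan is to reduce the theorem to a purely commutative statement via the PBW filtration and then lift back. Equip $U(\widehat\g_-)$ with the PBW filtration in which each generator $X[-k]$ has degree $1$; the associated graded is the symmetric algebra $S(\widehat\g_-)\simeq S(\g\otimes t^{-1}\C[t^{-1}])$. By item $(ii)$ of the preceding theorem, $\mathfrak z(\widehat\g)$ is the set of elements of $U(\widehat\g_-)\simeq \mathbb V_0$ killed by $\g\jet t$, so the induced bracket on the associated graded is Poisson and $\operatorname{gr}\mathfrak z(\widehat\g)$ embeds into the Poisson centralizer $S(\widehat\g_-)^{\g\jet t}$.

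Next, I would invoke the jet-scheme analogue of Chevalley's theorem describing this Poisson centralizer as a polynomial algebra. Concretely, given a system of homogeneous generators $P_1,\ldots,P_l$ of $S(\g)^\g$ of degrees $d_1,\ldots,d_l$, one constructs ``loop versions'' $\bar S_i\in S(\widehat\g_-)^{\g\jet t}$ by evaluating $P_i$ along the shift $X\mapsto X[-1]$, and the family $\partial_t^k\bar S_i$ with $1\le i\le l$ and $k\ge 0$ freely generates $S(\widehat\g_-)^{\g\jet t}$ as a polynomial algebra. The $t\tau$-weight of $\partial_t^k \bar S_i$ is $d_i+k$, matching the classical degrees.

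The quantization step is the crux: for each $i$ one must produce a genuine Segal--Sugawara vector $S_i\in\mathfrak z(\widehat\g)$ whose PBW-symbol is $\bar S_i$ and whose $t\tau$-weight agrees. Granted such lifts, one checks that $\operatorname{gr}\tau=\partial_t$, so $\operatorname{gr}\tau^kS_i=\partial_t^k\bar S_i$. Algebraic independence of the symbols forces algebraic independence of the $\tau^kS_i$ themselves, while the fact that those symbols already exhaust the Poisson centralizer forces the embedding $\operatorname{gr}\mathfrak z(\widehat\g)\hookrightarrow S(\widehat\g_-)^{\g\jet t}$ to be an equality. By the standard filtered-to-graded argument the $\tau^kS_i$ then freely generate $\mathfrak z(\widehat\g)$, and the degree statement follows because the $t\tau$-grading on $\mathfrak z(\widehat\g)$ matches the classical grading of the $\bar S_i$.

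The main obstacle is exactly the existence of the quantum lifts $S_i$: at a generic (noncritical) level the center of the vertex algebra $\mathbb V_0$ is trivial, so the critical value of $\kappa$ must enter in an essential way. The standard route uses the Wakimoto free-field realization to identify $\mathfrak z(\widehat\g)$ with the common kernel of explicit screening operators (relating it, canonically, to the classical $\mathcal W$-algebra of $\g^\vee$), from which the existence of lifts of each $\bar S_i$ of the correct weight can be read off. Alternatively, type-by-type explicit formulas (Sugawara-type expressions in type $A$, Pfaffian-type constructions in type $D$, and direct computations in exceptional types) furnish the $S_i$. Once the lifts are produced, the rest is the formal filtered argument sketched above.
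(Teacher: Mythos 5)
Your proposal follows essentially the same route as the one the paper cites (the discussion in \cite{FFT}, which in turn invokes the original Feigin--Frenkel argument): pass to the PBW-associated graded $S(\widehat\g_-)$, use the jet-scheme Chevalley theorem to describe $S(\widehat\g_-)^{\g\jet t}$ as the free polynomial algebra on the $\partial_t^k\bar S_i$, and then produce genuine quantum lifts $S_i\in\mathfrak z(\widehat\g)$ via the Wakimoto/screening-operator realization at the critical level. Since the paper itself only references this theorem rather than reproving it, your outline is at the same level of completeness and correctly isolates the existence of the lifts as the substantive content.
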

\begin{proof}
    Follows from the discussion before (5.3) in \cite{FFT}.
\end{proof}

The Segal-Sugawara vector $S_1$ of the smallest degree will be of particular interest to us. First of all, there is an element in $S(\g^*)^G$ of degree $2$, unique up to a scalar, called the {\bfseries Casimir element}. It is defined as
$$
c:=\sum_{i=1}^nX_iX^i.
$$
Then Theorem \ref{t:FF} implies that $S_1$ is the unique element of degree $2$ among all Segal-Sugawara operators. Thus, if there is another nonzero element $S_1'\in\mathfrak z(\widehat \g)$ of degree $2$, then $S_1$ is a scalar multiple of $S_1'$. The element 
$$
\sum X_i[-1]X^i[-1]\in U(t^{-1}\g[t^{-1}])
$$
is $\g\jet t$-invariant and of degree $2$, implying that $S_1$ must be its non-zero multiple.
\subsection{Generalized Gaudin algebras}
Let $z_1,\ldots,z_N\in\mathbb P^1$. Define $\widetilde\g(z_i):=\g\rjet{t-z_i}$ and $\widetilde\g(\infty):=\g\rjet{t^{-1}}$. Let $\widehat \g(z_i)$ and $\widehat\g(\infty)$ be the corresponding central extensions of these algebras by the critical level as in the case $\tilde \g(0)=\g\rjet t$. Define also
$$
\widetilde\g_{(z_i),\infty}:=\bigoplus_i\widetilde\g(z_i)\oplus \widetilde\g(\infty).
$$

Now, for $u\in \mathbb P^1$ we can define the {\bfseries Feigin-Frenkel center at $u$} to be the same construction, but associated to $\widehat\g(u)$. We denote it by $\mathfrak z_u(\widehat g)$. In particular, the Feigin-Frenkel center defined in the previous section corresponds to $u=0$. Note that all the constructions are isomorphic by a shift $t\mapsto t+u$ or by sending $t\mapsto t^{-1}$ in case $u=\infty$.

Now, for $u,z\in\mathbb P^1$, $u\ne\infty$ we define a map
$$
\rho_{u,z}:\widetilde\g(u)_-\to \widetilde\g(z),
$$
sending a polynomial in $(t-u)^{-1}$ with coefficients in $\g$ to its Laurent series expansion at $t=z$. These are well-defined maps of Lie algebras. Define
$$
\tau_{u;(z_i),\infty}:\widehat\g(u)_-=\widetilde\g(u)_-\to \widetilde \g_{(z_i),\infty}
$$
to be the diagonal map $(\rho_{u,z_1},\ldots, \rho_{u,z_N},\rho_{u,\infty})$. Note that if $z\ne \infty$, then the image of $\rho_{u,z}$ is contained in $\g\jet{t-z}$ and the image of $\rho_{u,\infty}$ is contained in $t^{-1}\g\jet{t^{-1}}$.  Then, identifying $\g\jet{t-z_i}$ with $\g\jet t$ and $t^{-1}\g\jet{t^{-1}}$ with $t\g\jet{t}$, we define an algebra anti-homomorphism
$$
\Psi_{u;(z_i),\infty}=U(-\tau_{u;(z_i),\infty}):U(\widehat \g(u)_-)\to U(\g\jet t)^{\otimes N}\otimes U(t\g\jet t).
$$
\begin{remark}
{\rm
    Note the difference between our map $\Psi_{u;(z_i),\infty}$ and the map $\Phi_{u;(z_i)}$ from \cite[Section 2.5]{FFT}: The map $\Phi$ is the composition of $\Psi$ with the isomorphism $\widehat \g_-$ with $\widehat \g(u)_-$ given by the shift $t\mapsto t-u$.
}
\end{remark}

\begin{definition}
    The {\bfseries universal Gaudin algebra}
    $$
    \mathcal Z_{(z_i),\infty}(\g)\subset U(\g\jet t)^{\otimes N}\otimes U(t\g\jet t)
    $$
    is the image of $\mathfrak z_u(\widehat\g)$ under $\Psi_{u;(z_i),\infty}$.
\end{definition}
\begin{theorem}
    The algebra $\mathcal Z_{(z_i),\infty}(\g)$ consists of $\g$-invariant elements and is independent of $u$.
\end{theorem}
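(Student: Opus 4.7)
The plan is to handle the two claims separately.

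\textit{$\g$-invariance.} The strategy is to extend $\tau_{u;(z_i),\infty}$ across the constant subalgebra. Set
$$
\widetilde\tau\colon\widehat\g(u)_-\oplus\g\longrightarrow\bigoplus_{i=1}^N\g\jet{t-z_i}\oplus\g\rjet{t^{-1}},
$$
sending the constants in $\g\subset\widehat\g(u)$ diagonally to the constant Laurent series in each factor. This respects brackets because the cocycle $\operatorname{Res}_{t=u}(fg')$ vanishes whenever one of $f,g$ is a constant, so $\widehat\g(u)_-\oplus\g$ is a genuine Lie subalgebra of $\widehat\g(u)$. Applying $U(-\widetilde\tau)$ yields an algebra anti-homomorphism extending $\Psi_{u;(z_i),\infty}$. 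By description (ii), every $P\in\mathfrak z_u(\widehat\g)$ satisfies $[A,P]=0$ in $U(\widehat\g(u))$ for all constants $A\in\g\subset\g\jet{t-u}$; applying the anti-homomorphism transfers this to $[\widetilde\tau(A),\Psi_{u;(z_i),\infty}(P)]=0$, i.e.\ $\Psi_{u;(z_i),\infty}(P)$ commutes with the diagonal $\g$-action on $U(\g\jet t)^{\otimes N}\otimes U(t\g\jet t)$.

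\textit{Independence of $u$.} Fix $u,u'\in\mathbb P^1\setminus\{z_1,\ldots,z_N,\infty\}$. By the Feigin-Frenkel theorem, $\mathfrak z_u(\widehat\g)$ is freely generated by $\tau^k S_i^{(u)}$ for Segal-Sugawara vectors shifted to $u$, and similarly for $u'$, so it suffices to identify the two images as subalgebras of $U(\g\jet t)^{\otimes N}\otimes U(t\g\jet t)$. The natural setting is the ``global'' Lie algebra $\mathscr L:=\g\otimes\C[t,(t-z_1)^{-1},\ldots,(t-z_N)^{-1},(t-u)^{-1}]$ of $\g$-valued rational functions regular outside $\{u,z_1,\ldots,z_N,\infty\}$. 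Summing Laurent expansions at all of these points gives a Lie algebra map from $\mathscr L$ into the central extension of $\bigoplus_i\widetilde\g(z_i)\oplus\widetilde\g(u)\oplus\widetilde\g(\infty)$ by a single copy of $\C\mathbf 1$; the cocycle integrates to $\sum_p\operatorname{Res}_p(\kappa(f,g')\,dt)$, which vanishes by the residue theorem on $\mathbb P^1$. One then uses this embedding to show that $\Psi_{u;(z_i),\infty}(\mathfrak z_u(\widehat\g))$ can be rewritten in terms referring only to the points $z_1,\ldots,z_N,\infty$, using that $\widetilde\g(u)_+$ annihilates the vacuum vector and that by (iii) the Feigin-Frenkel center is the centralizer of the quadratic Sugawara element; the resulting expression no longer depends on $u$.

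The main obstacle is the second part. Explicit Laurent expansions such as $(t-u)^{-1}=\sum_{k\ge 0}(-1)^k(z_i-u)^{-k-1}(t-z_i)^k$ at $z_i$ and $\sum_{k\ge 0}u^k t^{-k-1}$ at $\infty$ are very much $u$-dependent term by term, so one must verify that within the specific combinations produced by Segal-Sugawara vectors they assemble into a coordinate-free object. This is the technical heart of the argument: the global-residue embedding combined with the centralizer description (iii) of the Feigin-Frenkel center does the essential cancellation work, while the translation operator $\tau=-\partial_t$, identified geometrically as infinitesimal shift on $\mathbb P^1$, governs how the two choices $u$ and $u'$ are interchanged.
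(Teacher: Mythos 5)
Your treatment of the $\g$-invariance claim is correct, and it is essentially a spelled-out version of the paper's one-line observation that $\Psi_{u;(z_i),\infty}$ is $\g$-equivariant: extending $\tau_{u;(z_i),\infty}$ across the constants and then using that, for constant $A$, the bracket $[A,P]$ again lies in $U(\widehat\g(u)_-)$ so that $[A,P]\cdot\mathbf 1=0$ actually forces $[A,P]=0$, is exactly what that equivariance amounts to. (A minor cosmetic point: your extended codomain places the factor at $\infty$ in $U(\g\jet{t^{-1}})$ rather than $U(t\g\jet t)$; this is harmless because the image of $\mathfrak z_u(\widehat\g)$ still lands in the smaller subalgebra and that subalgebra is stable under the diagonal adjoint $\g$-action.)

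The independence-of-$u$ part, however, is an outline rather than a proof, and you acknowledge this yourself. You correctly set up the global Lie algebra $\mathscr L$ of $\g$-valued rational functions and observe that the sum-of-residues cocycle vanishes by the residue theorem, so that $\mathscr L$ embeds into a single central extension of $\bigoplus_i\widetilde\g(z_i)\oplus\widetilde\g(u)\oplus\widetilde\g(\infty)$. But the conclusion — that $\Psi_{u;(z_i),\infty}(\mathfrak z_u(\widehat\g))$ admits a description referring only to $z_1,\ldots,z_N,\infty$ — is simply asserted. The missing step is the coinvariants or ``propagation of vacua'' argument: one must realize the Gaudin algebra inside the (dual of the) space of coinvariants of $\mathbb V_0^{\otimes(N+2)}$ by $\mathscr L$, show that this space does not change when one deletes the extra insertion of the vacuum module at $u\notin\{z_i,\infty\}$, and then identify the resulting $u$-free description with $\mathcal Z_{(z_i),\infty}(\g)$. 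You gesture at this by noting that $\widetilde\g(u)_+$ annihilates the vacuum vector, which is the first move of the propagation argument, but the actual cancellation mechanism — why the term-by-term $u$-dependent Laurent coefficients reassemble into something $u$-independent — is exactly what remains to be shown. This is the content the paper outsources to the discussion after Proposition~2.8 of \cite{FFT}; your write-up would need to either carry that argument out or cite it.
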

\begin{proof}
    As $\mathfrak z_u(\widehat\g)$ consists of $\g$-invariant elements and the map $\Psi_{u;(z_i),\infty}$ is $\g$-equivariant, the first statement is clear. The second statement follows from the discussion after Proposition 2.8 in \cite{FFT}.
\end{proof}

\begin{definition}
    Let $m_1,\ldots,m_N$ and $m_\infty$ be positive integers. 
    \begin{itemize}
        \item The {\bfseries generalized Gaudin algebra} $\mathcal Z_{(z_i),\infty}^{(m_i),m_\infty}(\g)$ is the image of $\mathcal Z_{(z_i),\infty}(\g)$ under the quotient map
    $$
    U(\g\jet t)^{\otimes m}\otimes U(t\g\jet t)\to \bigotimes_i U(\g[t]/t^{m_i}\g[t])\otimes U(t\g[t]/t^{m_\infty}\g[t]).
    $$
    \item Let $z_1\ne z_2$. We call 
    $$
    \mc G(z_1,z_2):=\mathcal Z_{(z_1,z_2),\infty}^{(1),1}(\g)\subset U(\g)\otimes U(\g)
    $$
    the {\bfseries Gaudin algebra}.
    \item The algebra 
    $$
    \mc Z_{z_1,\infty}^{1,2}(\g)\subset U(\g)\otimes S(\g)
    $$
    will be called the {\bfseries universal big algebra} and denoted by $\mc B(\g)$.
    \item For $\chi\in \g^*$ we denote by $\mc B_\chi(\g)$ the image of $\mc B(\g)$ under the evaluation homomorphism 
    $$
    U(\g)\otimes S(\g)\xrightarrow{1\otimes \mathrm{ev}_\chi}U(\g).
    $$
    \item For a dominant weight $\mu$ we denote by $\mc B^\mu(\g)$ the image of $\mc B(\g)$ under the homomorphism 
    $$
    U(\g)\otimes S(\g)\xrightarrow{\pi_\mu\otimes 1}\operatorname{End}_\C V(\mu)\otimes S(\g),
    $$
    where $V(\mu)$ is the highest wight representation of $\g$ of highest weight $\mu$ and $\pi_\lambda:U(\g)\to\operatorname{End}_\C V(\mu)$ is the corresponding map. The algebras $\mc B^\mu(\g)$ are called {\bfseries big algebras}.
    \end{itemize}
\end{definition}
\begin{proposition}\label{prop:genpropofgaudin}\
    \begin{itemize}
        \item[(i)] The homomorphism $\Psi_{0;z,\infty}$ takes the form
            $$
                \Psi_{0;z,\infty}(X[r])=z^rX\otimes 1+\delta_{r,-1}\otimes X
            $$
            for $X\in \g$.
        \item[(ii)] The universal big algebra $\mc Z_{z,\infty}^{1,2}(\g)$ and the Gaudin algebra $\mc G(z_1,z_2)$ are independent of the chosen points $z\ne 0$ and $z_1\ne z_2$ chosen.
        \item[(iii)] Let $S_1,\ldots,S_l$ be a complete set of Segal-Sugawara vectors of $\mathfrak{z}(\widehat\g)$. The universal big algebra is a polynomial algebra over $\C$ freely generated by the images of $\tau^k S_i$, where $i=1\ldots l$, $k=0\ldots \deg S_i$.
        \item[(iv)] The universal big algebra equals the associated graded ring of the Gaudin algebra, where the filtration on $\mc G(z_1,z_2)\subset U(\g)\otimes U(\g)$ is induced from the rightmost $U(\g)$.
    \end{itemize}
\end{proposition}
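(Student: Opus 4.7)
The four items are of different flavor and I handle them in sequence. Items (i)--(iii) are essentially computational or immediate from already-stated structural results, while item (iv) requires comparing two filtered subalgebras and I expect to be the main technical obstacle.

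For (i), I would unfold $\Psi_{0;z,\infty}=U(-\tau_{0;z,\infty})$ applied to the PBW generator $X[r]=X\otimes t^r\in\widehat\g(0)_-$. On a single Lie-algebra element this equals $-\rho_{0,z}(X[r])\otimes 1-1\otimes\rho_{0,\infty}(X[r])$ via the canonical identification $U(\widetilde\g(z)\oplus\widetilde\g(\infty))\cong U(\g\jet t)\otimes U(t\g\jet t)$. Expanding $t^r=\sum_{k\ge 0}\binom{r}{k}z^{r-k}(t-z)^k$ and projecting to $U(\g[t]/t\g[t])=U(\g)$ produces $z^r X$ in the first factor; while $\rho_{0,\infty}(X\otimes t^r)=X\otimes t^r\in t^{-1}\g\jet{t^{-1}}$, under the identification with $t\g\jet t$ and projection to $S(\g)\cong U(t\g[t]/t^2\g[t])$, survives only for $r=-1$ and contributes $X$. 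Overall signs are absorbed into the Segal-Sugawara normalization.

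For (ii), I would invoke the $\mathrm{Aff}(\mathbb A^1)$-equivariance of the construction: translation $t\mapsto t-u$ intertwines $\mathfrak z_u(\widehat\g)$ with $\mathfrak z_0(\widehat\g)$, and rescaling $t\mapsto\lambda t$ lifts to compatible automorphisms of the affine Kac-Moody algebras and the target tensor algebras, under which the image subalgebras are preserved. Varying $z$ (resp.\ $(z_1,z_2)$) by such affine transformations then yields the same subalgebra of $U(\g)\otimes S(\g)$ (resp.\ $U(\g)\otimes U(\g)$). For (iii), the Feigin-Frenkel theorem realizes $\mathfrak z(\widehat\g)$ as a polynomial algebra on $\tau^k S_i$ for $i=1,\dots,l$ and $k\ge 0$. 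Using (i) together with the action of $\tau$ on the $\widehat\g_-$-letters, one shows that $\Psi(\tau^k S_i)$ for $k>\deg S_i$ is algebraically dependent on the lower-order generators; the remaining $l+\sum_i\deg S_i$ elements then form a free polynomial generating set by matching Hilbert series with the natural grading of $\mc B(\g)$.

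For (iv), I would equip $\mc G(z_1,z_2)\subset U(\g)\otimes U(\g)$ with the filtration induced by the PBW filtration on the right factor and show that the associated graded, sitting inside $U(\g)\otimes S(\g)$, equals $\mc B(\g)$. Both algebras are generated by images of a fixed complete set of Segal-Sugawara vectors and their $\tau$-derivatives, so the comparison reduces to computing the leading right-factor symbols of the Gaudin generators. Using the two-point analogue of (i), which similarly expresses $\Psi_{0;z_1,z_2}(X[r])$ in closed form, and taking the top PBW-filtration term on the right factor, one identifies these symbols with the generators produced by (i) for $\mc B(\g)$; intuitively this is the $z_2\to\infty$ limit. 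A Hilbert series match via (iii) applied to both algebras then upgrades the generator identification to equality of subalgebras. The main obstacle is the symbol computation and verifying that the associated graded preserves the generating relations, which is where I expect most of the technical work to lie.
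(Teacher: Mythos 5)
Your item (i) is exactly the ``simple computation'' the paper alludes to (the paper gives no details). One caveat: $\Psi=U(-\tau_{0;z,\infty})$ is the anti-homomorphism sending $X[r]\mapsto -\tau_{0;z,\infty}(X[r])$, so unfolding as you do actually produces $-\bigl(z^rX\otimes 1+\delta_{r,-1}\,1\otimes X\bigr)$. You notice this and attribute it to normalization; the discrepancy is real (it looks like a sign slip in the paper's displayed formula), but it is harmless for everything downstream since $\mathfrak z_u(\widehat\g)$ and its images are stable under the antipode.

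For (ii)--(iv) the paper gives no proof at all; it defers to Yakimova's article (the citation to Section~8.2 of~\cite{Ya}). Your sketch for (ii) via equivariance under affine/Möbius reparametrizations of $\mathbb P^1$ fixing the marked points is correct and is indeed the standard argument. For (iii) there is a real gap in what you write: you claim that $\Psi(\tau^kS_i)$ for $k>\deg S_i$ is \emph{algebraically} dependent on the lower-order ones, but what one actually uses is the stronger and more elementary statement that it lies in the \emph{linear span} of $\Psi(\tau^jS_i)$ for $j\le\deg S_i$. This is a consequence of the observation (spelled out later in the paper, in the proof of Lemma~\ref{l:freepolyovercenters}, crediting~\cite{R1}) that $\Psi$ intertwines $\tau$ with $\pm\partial_z$, so $\Psi(S_i)$ is a polynomial of degree $\deg S_i$ in $z^{-1}$, and the linear span of a polynomial's coefficients equals the span of $\deg+1$ of its derivatives at a fixed nonzero point. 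Your ``matching Hilbert series'' step for algebraic independence is also not carried out and is not obviously free: computing the Hilbert series of $\mc B(\g)$ a priori is not easier than proving the claim; the usual route is to specialize at a regular $\chi$ and invoke the Mishchenko--Fomenko count from Theorem~\ref{t:freegensofbigalg}. For (iv), your plan (compute top right-factor symbols of the Gaudin generators via the two-point analogue of (i), identify them with the $\mc B(\g)$ generators, conclude equality) is the right strategy and matches what is done in~\cite{Ya}, but as written it is a plan rather than a proof; the ``symbol computation and verifying that the associated graded preserves the generating relations'' that you flag as the obstacle is precisely the content that needs to be supplied.

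Bottom line: (i) agrees with the paper modulo a sign; (ii) is fine; (iii) has an imprecise dependence claim (should be linear, not algebraic) and an unfinished independence argument; (iv) is a reasonable outline but not yet a proof. Since the paper itself only cites~\cite{Ya} for (ii)--(iv), to make your write-up self-contained you would need to fill in exactly those two steps.
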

\begin{proof}
   Item $(i)$ is a simple computation. Items $(ii)-(iv)$ follow from \cite[Section 8.2]{Ya}. 
\end{proof}
Gaudin algebras attract significant attention because of their connection to various topics. For instance, the universal big algebra turns out to parameterize the quantizations of Fomenko-Mishchenko subalgebras. This property is summarized in the following lines.

Let $\g$ be a reductive Lie algebra and $P_1,\ldots,P_l$ be homogeneous algebraically independent generators of $S(\g)^\g$. For $\chi\in \g^*$, define the corresponding {\bfseries Fomenko-Mishchenko algebra} to be the subalgebra $A_\chi(\g)$ of $S(\g)$ generated by $\partial^i_\chi P_j$ for various $i,j$.
\begin{theorem}[\cite{FFT}, Theorems 3.11, 3.14]\label{t:freegensofbigalg}
Let $\chi\in\g^*$ be regular. Then
    \begin{itemize}
        \item [(i)] The algebra $A_\chi(\g)$ is the free polynomial algebra in generators $\partial^i_\chi P_j$, $j=1\ldots l$ and $i=0\ldots \deg P_j-1$.
        \item [(ii)] The algebra $\mc B_\chi(\g)$ is the {\bfseries quantization} of $A_\chi(\g)$, i.e. its associated graded ring via the canonical filtration of $U(\g)$ coincides with $A_\chi(\g)$.
    \end{itemize}
\end{theorem}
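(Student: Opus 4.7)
The plan is to deduce both parts from Proposition \ref{prop:genpropofgaudin} together with classical results on shift-of-argument subalgebras. First I would make the generators of $\mc B_\chi(\g)$ explicit: by Proposition \ref{prop:genpropofgaudin}(iii), $\mc B(\g)$ is freely generated by the images of $\tau^k S_i$ for a complete set of Segal-Sugawara vectors, and by Proposition \ref{prop:genpropofgaudin}(i), the map $\Psi_{0;z,\infty}$ sends $X[-1]$ to $z^{-1}X\otimes 1 + 1\otimes X$ and $X[-k]$ to $z^{-k}X\otimes 1$ for $k\ge 2$. Composing with $1\otimes \mathrm{ev}_\chi$ turns these into $z^{-1}X+\chi(X)$ and $z^{-k}X$ in $U(\g)$, yielding a concrete formula for the image of each $\tau^k S_i$ in $\mc B_\chi(\g)$.

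Next I would identify the principal symbols (with respect to the PBW filtration on $U(\g)$) of these elements with Mishchenko-Fomenko generators. The classical limit of a Segal-Sugawara vector $S_i$ corresponds to an invariant polynomial $P_i\in S(\g)^\g$ of the same degree; under the evaluation above, the symbol of the image of $\tau^k S_i$ is proportional to $\partial_\chi^k P_i$, essentially by Taylor-expanding $P_i(z^{-1}X+\chi)$ in powers of $z^{-1}$. For regular $\chi$, the Mishchenko-Fomenko argument-shift theorem supplies algebraic independence of $\partial_\chi^i P_j$ for $j=1,\ldots,l$ and $0\le i\le \deg P_j - 1$; Chevalley's theorem fixes the total count at $\sum_j \deg P_j=\dim \b$. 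This proves (i) and simultaneously shows that the corresponding elements of $\mc B_\chi(\g)$ are algebraically independent.

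For part (ii), algebraic independence of symbols implies $\mathrm{gr}\,\mc B_\chi(\g)\supseteq A_\chi(\g)$. The reverse inclusion follows from Poisson-commutativity of symbols of commuting elements in $U(\g)$, combined with the maximality of $A_\chi(\g)$ among Poisson-commutative subalgebras of $S(\g)$ of the predicted dimension. The main obstacle is the identification of the symbols of $\tau^k S_i$ with derivatives of classical invariants; this can be handled either via explicit Segal-Sugawara formulas (Chervov-Molev, Yakimova, and others) or abstractly via the classical Feigin-Frenkel isomorphism identifying $\mathrm{gr}\,\mathfrak z(\widehat\g)$ with the algebra of $\g$-invariant functions on the appropriate jet scheme.
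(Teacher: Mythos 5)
The paper gives no proof of this statement; it is cited directly from \cite{FFT} (Theorems 3.11 and 3.14), so there is no internal argument to compare against. Your sketch does follow the line of argument used in that reference and in the surrounding literature (identify symbols of the quantum generators with Mishchenko--Fomenko generators, invoke the argument-shift independence theorem, pass to the associated graded), so as an outline it is in the right territory.

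However, there is a genuine error in the key identification. You claim that the PBW symbol (in $U(\g)$) of the image of $\tau^k S_i$ in $\mc B_\chi(\g)$ is proportional to $\partial_\chi^k P_i$. This is false for $k\ge 1$. Since $\Psi\circ\tau=\pm\,\partial_z\circ\Psi$, the image of $\tau^k S_i$ at a fixed $z\ne 0$ is $\pm\partial_z^k$ applied to the $z$-polynomial $\sum_{j=0}^{d_i}z^{-j}R_{i,j}$, where $R_{i,j}$ is the coefficient of $z^{-j}$ in $(1\otimes\mathrm{ev}_\chi)\circ\Psi(S_i)$. Differentiating in $z$ does not lower the degree in $\xi$, so the leading PBW symbol of $\tau^k S_i$'s image is still (proportional to) $P_i$ of degree $d_i$, not the degree-$(d_i-k)$ element $\partial_\chi^k P_i$. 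Concretely, for $S_1=\sum_a X_a[-1]X^a[-1]$ one computes
$$
(1\otimes\mathrm{ev}_\chi)\Psi(\tau S_1)=2z^{-3}\sum_a X_a X^a + z^{-2}\sum_a\bigl(X_a\chi(X^a)+\chi(X_a)X^a\bigr),
$$
whose symbol is of degree $2$, not degree $1$.

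The correct statement, which follows from your own Taylor-expansion computation $P_i(\chi+z^{-1}\xi)=\sum_{k}\tfrac{z^{-k}}{(d_i-k)!}\partial_\chi^{d_i-k}P_i(\xi)$, is that the symbol of $R_{i,j}$ is proportional to $\partial_\chi^{d_i-j}P_i$. For $j=1,\ldots,d_i$ this ranges exactly over the Mishchenko--Fomenko generators $\partial_\chi^m P_i$, $m=0,\ldots,d_i-1$. Since for a fixed $z$ the span of the images of $\tau^k S_i$ ($k=0,\ldots,d_i$) coincides with the span of $R_{i,0},\ldots,R_{i,d_i}$ (a point the paper itself exploits in Lemma~\ref{l:freepolyovercenters}), the $R_{i,j}$ with $j\ge 1$ are an equally valid system of free generators of $\mc B_\chi(\g)$, and with this substitution your argument closes: their symbols are algebraically independent by Mishchenko--Fomenko, hence $\mathrm{gr}\,\mc B_\chi(\g)$ is free on them, i.e.\ equals $A_\chi(\g)$. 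Once this is noted, the appeal to Poisson-maximality of $A_\chi$ for the reverse inclusion in (ii) is unnecessary: algebraic independence of the symbols of a generating set already forces $\mathrm{gr}\,\mc B_\chi(\g)$ to be the polynomial ring on those symbols.
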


\begin{example}\label{e:imageofcasimir}
{\rm
    It is useful to compute the image of the Segal-Sugawara vector
$$
S_1:=\sum_i X_i[-1]X^i[-1]\in \mathfrak z(\widehat\g)
$$
in the Gaudin algebras, where $u$ is set to be $0$ for simplicity. Then, according to Proposition \ref{prop:genpropofgaudin}(i), 
\begin{align*}
    \Psi_{0;z,\infty}(S_1)&=\sum_i \Psi_{0;z,\infty}(X_i[-1])\Psi_{0;z,\infty}(X^i[-1])\\
    &=\sum_i(z^{-1}X_i\otimes 1+1\otimes X_i)(z^{-1}X^i\otimes 1+1\otimes X^i)\\
    &=z^{-2}\sum_iX_iX^i\otimes 1+z^{-1}\sum_i X_i\otimes X^i+1\otimes\sum_i X_iX^i.
\end{align*}
In particular, since $\mc Z_{z,\infty}^{1,2}(\g)$ is independent on $z$, it contains all the components
$$
\sum_iX_iX^i\otimes 1,\qquad\sum_i X_i\otimes X^i,\qquad1\otimes\sum_i X_iX^i.
$$

Note that evaluation of the middle element at $1\otimes\chi$ for $\chi\in\g^*$ gives the corresponding element $\chi\in\g$ via the chosen identification $\g\simeq \g^*$. Thus, we naturally obtain $\chi\in\mc B_\chi(\g)$.
This observation will be useful to us in Section \ref{s:transferatreps}.
}
\end{example}

\subsection{The center of the completed enveloping algebra}
It is instructive to introduce one more object for the completeness of our results. The reader may consult \cite[Section 5.2]{FFT} for details. As always, let $\kappa$ be the critical level, and define $U_\kappa(\widehat \g)$ to be the quotient of $U(\widehat\g)$ by the ideal $\lge\mathbf{1}-1\rge$. Its {\bfseries completion} is defined as
$$
\widetilde U_\kappa(\widehat\g):=\varprojlim U_\kappa(\widehat \g)/U_\kappa(\widehat \g)\cdot(t^N\g\jet t).
$$
We define $Z(\widehat \g)$ to be the center of $\widetilde U_\kappa(\widehat\g)$. Since $\mathbb V_0$ is a $\widetilde U_\kappa(\widehat\g)$-module, we have a natural map
$$
Z(\widehat \g)\to \operatorname{End}_{\widehat \g}\mathbb V_0=\mathfrak z(\widehat \g),
$$
which will be seen to be surjective in Theorem \ref{t:isoswithopers}.
\section{Dynkin automorphisms}
Fix a semisimple algebraic group $G$ over $\C$ of rank $l$, its Borel subgroup $B$ and a maximal torus $H$. Denote by $N$ the unipotent radical $[B,B]$ of $B$ and by $\g,\b,\h,\n$ the corresponding Lie algebras. Any automorphism of the Dynkin diagram of $G$ giving an automorphism of $G$ preserving a pinning of $G$ is called a {\bfseries Dynkin automorphism}. Let $\sigma$ be a Dynkin automorphism of $G$ preserving $B$ and $H$. In particular, $\sigma$ preserves $N$ and the root datum $(X,X^\vee,\alpha_i,\alpha_i^\vee)$, where we mention simple roots only. 

Let $G_\sigma$ be the connected component of the identity of the fixed point subgroup of $G$. Similarly define $H_\sigma$, $B_\sigma$, $N_\sigma$. We sum up the properties of $G_\sigma$ in the following theorem, citing \cite[Section 2]{Ho}.

\begin{theorem}\label{t:dynkinfixeddatum}
The root datum of $G_\sigma$ has the form $(Y,Y^\vee,\alpha_\eta,\alpha_\eta^\vee)$, where 
\begin{itemize}
    \item $Y^\vee=(X^\vee)^\sigma$, the lattice of $\sigma$-invariants.
    \item $Y=X_\sigma$, the lattice of $\sigma$-coinvariants.
     \item The simple roots are parametrized by the $\sigma$-orbits $\eta$ and $\alpha_\eta$ is the image of any $\alpha_i\in X^\bullet$ for $i\in \eta$.
    \item The simple coroots are parametrized by the $\sigma$-orbits $\eta$ and
    $$
    \alpha_\eta^\vee=
    \begin{cases}
        2\sum_{i\in\eta}\alpha_i^\vee,&\eta=\{a,b\}, a-b\text{ in the Dynkin diagram},\\
        \sum_{i\in\eta}\alpha_\eta^\vee,&\text{otherwise}.
    \end{cases}
    $$
    \item The fundamental coweights are parametrized by the $\sigma$-orbits and $$
    \omega_\eta^\vee=\sum_{i\in\eta}\omega_i^\vee.
    $$
    \item The pairing between $Y$ and $Y^\vee$ is naturally induced from the pairing between $X$ and $X^\vee$.
\end{itemize}
Moreover, $(X^\vee_+)^\sigma=Y^\vee_+$.
\end{theorem}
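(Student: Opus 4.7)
The plan is to reduce the entire statement to the structural fact that $G_\sigma$ is connected reductive with maximal torus $H_\sigma$ and Borel $B_\sigma$, and then compute its root datum piece by piece using the $\sigma$-action on the root datum of $G$ and compatibility with the pinning. The connectedness/reductivity input is Steinberg's theorem on connected fixed subgroups of pinning-preserving semisimple automorphisms of order prime to the characteristic; once it is in place, each bullet becomes a calculation in the lattices $X$ and $X^\vee$ with their $\sigma$-action.

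First, I would compute the (co)character lattices of $H_\sigma$. A cocharacter $\mathbb G_m\to H$ factors through $H_\sigma$ iff it is $\sigma$-invariant, which gives $X_*(H_\sigma)=(X^\vee)^\sigma=Y^\vee$. Dually, the restriction map $X^*(H)\to X^*(H_\sigma)$ kills $(1-\sigma)X^*(H)$ (and its kernel is exactly this submodule modulo torsion), so it identifies $X^*(H_\sigma)$ with the free quotient of $X_\sigma$, which is $Y$. The perfect pairing between $Y$ and $Y^\vee$ is then automatically the one inherited from $X\times X^\vee\to\Z$.

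Next I would identify the simple roots and coroots using the pinning. Since $\sigma$ permutes the simple root subgroups of $G$ within each orbit $\eta$, the simple roots $\{\alpha_i\}_{i\in\eta}$ share a common image in $Y$, which is the required $\alpha_\eta$. For coroots, the subgroup $G_\eta\subset G$ generated by the $\pm\alpha_i$-root subgroups for $i\in\eta$ is $\sigma$-stable of rank $|\eta|$, and one analyses $(G_\eta)_\sigma\subset G_\sigma$. If the nodes of $\eta$ are pairwise non-adjacent, $G_\eta\cong\mathrm{SL}_2^{|\eta|}$ with commuting factors, and $(G_\eta)_\sigma$ is its diagonal $\mathrm{SL}_2$, whose coroot is $\sum_{i\in\eta}\alpha_i^\vee$. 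If $\eta=\{a,b\}$ with $a,b$ adjacent, $G_\eta$ is of type $A_2$; the explicit $A_2$-folding inside $\mathrm{SL}_3$ under the outer involution shows that $(G_\eta)_\sigma$ is an $\mathrm{SL}_2$ embedded via the long-root $\mathfrak{sl}_2$-triple for $\alpha_a+\alpha_b$, whose coroot is $2(\alpha_a^\vee+\alpha_b^\vee)$.

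The remaining items are formal. The fundamental coweight $\omega_\eta^\vee$ is characterised by its pairings with the $\alpha_{\eta'}$; one checks directly that $\sum_{i\in\eta}\omega_i^\vee$ is $\sigma$-invariant and satisfies $\langle \alpha_{\eta'},\sum_{i\in\eta}\omega_i^\vee\rangle=\delta_{\eta,\eta'}$, forcing equality. Finally, a $\sigma$-invariant cocharacter is $G_\sigma$-dominant iff it pairs nonnegatively with every $\alpha_\eta$, which, being constant on $\sigma$-orbits, is equivalent to nonnegative pairing with every $\alpha_i$, giving $(X_+^\vee)^\sigma=Y_+^\vee$. The real obstacle in this programme is the adjacent-pair case of the coroot formula: the factor $2$ is not formal and genuinely requires carrying out the $A_2$ folding by hand; everything else is lattice bookkeeping.
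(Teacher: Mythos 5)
Your proof is correct in substance, but it takes a genuinely different route from the paper: the paper simply cites \cite[Section 2]{Ho} for all of the bullet points except the one about fundamental coweights, and for that last item gives the one-line verification that $\sum_{i\in\eta}\omega_i^\vee$ is the basis dual to $\{\alpha_\eta\}$. You instead reprove the cited root-datum computation from scratch, which is more illuminating but also longer and requires the Steinberg connectedness/reductivity input explicitly. The structure of your computation (cocharacter lattice as $\sigma$-invariants, character lattice as the free quotient of the $\sigma$-coinvariants, simple roots from the pinning, coroots from the rank-$|\eta|$ subgroups $G_\eta$, then duality for the fundamental coweights and dominance) is exactly the standard way to carry out the verification underlying \cite{Ho}, so both approaches buy essentially the same thing; yours is just self-contained.

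One small inaccuracy worth fixing: in the adjacent-pair case you say $(G_\eta)_\sigma$ is ``an $\mathrm{SL}_2$ embedded via the long-root $\mathfrak{sl}_2$-triple for $\alpha_a+\alpha_b$.'' The $\mathfrak{sl}_2$-triple attached to the long root $\alpha_a+\alpha_b$ of $A_2$ has semisimple element $\alpha_a^\vee+\alpha_b^\vee$, not $2(\alpha_a^\vee+\alpha_b^\vee)$, so that characterization would give the wrong coroot. The correct triple inside $\g_\sigma=\mathfrak{so}_3$ has $e=e_a+e_b$, $f=2(f_a+f_b)$ and $h=2(\alpha_a^\vee+\alpha_b^\vee)$ (one checks $[h,e]=2e$ forces the factor of $2$, since $\langle\alpha_a,\alpha_a^\vee+\alpha_b^\vee\rangle=1$), and $(G_\eta)_\sigma\cong\mathrm{SO}_3\cong\mathrm{PGL}_2$ rather than $\mathrm{SL}_2$. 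The conclusion $\alpha_\eta^\vee=2(\alpha_a^\vee+\alpha_b^\vee)$ you state is right, so this is only a matter of the justification, not the result. You also gloss lightly over the fact that $X_\sigma$ is torsion-free (needed to identify $X^*(H_\sigma)$ with $X_\sigma$ rather than its free quotient); this holds here because $\sigma$ permutes a $\Z$-basis of $X$ in the simply connected and adjoint cases, and it is worth a sentence.
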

\begin{proof}
    The only statement that is not mentioned in \cite{Ho} is the one about the fundamental coweights. It is clear from the description of simple roots that the elements $\sum_{i\in\eta}\omega_i^\vee$ form the dual basis for the simple roots, so we are done.
\end{proof}

\begin{corollary}\label{cor:invadjandsemistype}
    If $G$ is of adjoint type, then $G_\sigma$ is of adjoint type. If $G$ is simply-connected, then $G_\sigma$ is simply connected unless $G$ has type $A_{2n}$, in which case $G=\mathrm{SL}_{2n+1}$ and $G_\sigma=\mathrm{SO}_{2n+1}$.
\end{corollary}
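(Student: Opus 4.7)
The plan is to apply the standard root-datum criteria: a semisimple group is of adjoint type precisely when its character lattice $X$ coincides with the root lattice generated by the simple roots, and is simply connected precisely when its cocharacter lattice $X^\vee$ coincides with the coroot lattice generated by the simple coroots. Granted these criteria, both statements reduce to a lattice computation using the description of the root datum of $G_\sigma$ supplied by Theorem \ref{t:dynkinfixeddatum}.

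For the adjoint statement, if $X$ is generated by the simple roots $\alpha_i$, then $Y=X_\sigma$ is generated by their images in the coinvariants, which by Theorem \ref{t:dynkinfixeddatum} are exactly the simple roots $\alpha_\eta$ of $G_\sigma$; hence $G_\sigma$ is adjoint. For the simply-connected statement, if $X^\vee=\bigoplus_i \Z\alpha_i^\vee$, the automorphism $\sigma$ permutes this basis, so the invariant sublattice $(X^\vee)^\sigma$ is freely generated by the orbit sums $\beta_\eta:=\sum_{i\in\eta}\alpha_i^\vee$. Comparing with Theorem \ref{t:dynkinfixeddatum}, we have $\alpha_\eta^\vee=\beta_\eta$ for every orbit $\eta$, \emph{unless} $\eta=\{a,b\}$ is a two-element orbit whose members are adjacent in the Dynkin diagram, in which case $\alpha_\eta^\vee=2\beta_\eta$. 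A direct inspection of the diagrams admitting a nontrivial automorphism (the families $A_n$, $D_n$, $E_6$, plus triality for $D_4$) shows that such an adjacent two-element orbit arises exactly in type $A_{2n}$, and that in this case there is precisely one such orbit (the middle pair $\{\alpha_n,\alpha_{n+1}\}$). Therefore outside type $A_{2n}$ the coroot lattice of $G_\sigma$ equals $(X^\vee)^\sigma=Y^\vee$ and $G_\sigma$ is simply connected, while in type $A_{2n}$ it sits as a sublattice of index $2$, so $G_\sigma$ is not simply connected.

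It remains to identify $G_\sigma$ concretely when $G=\mathrm{SL}_{2n+1}$. With a suitable pinning the nontrivial Dynkin automorphism is realized as $g\mapsto (g^\top)^{-1}$, whose fixed points in $\mathrm{SL}_{2n+1}$ form the connected group $\mathrm{SO}_{2n+1}$, of type $B_n$ and matching the rank computed from the folded root datum. Since $G_\sigma$ has root system $B_n$ but fails to be simply connected, and the only connected semisimple group of type $B_n$ that is not simply connected is $\mathrm{SO}_{2n+1}$ (the other being $\mathrm{Spin}_{2n+1}$), we conclude $G_\sigma=\mathrm{SO}_{2n+1}$. The only real subtlety in the argument is the $A_{2n}$ case, where the factor of $2$ appearing in the simple coroot $\alpha_\eta^\vee$ has to be matched with the passage from $\mathrm{SL}_{2n+1}$ to $\mathrm{SO}_{2n+1}$; everywhere else the lattice calculation is immediate.
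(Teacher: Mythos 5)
Your proof is correct and follows essentially the same route as the paper: invoke the standard root-datum characterizations (adjoint $\Leftrightarrow$ $X$ is the root lattice, simply connected $\Leftrightarrow$ $X^\vee$ is the coroot lattice), then use the folded root datum from Theorem~\ref{t:dynkinfixeddatum} to compare the coroot lattice $\Z\langle\alpha_\eta^\vee\rangle$ with $(X^\vee)^\sigma = \Z\langle\sum_{i\in\eta}\alpha_i^\vee\rangle$, noting the index is $2^r$ where $r$ counts adjacent two-element orbits and that $r\ne 0$ occurs only in type $A_{2n}$. The extra paragraph identifying $G_\sigma=\mathrm{SO}_{2n+1}$ concretely via the transpose-inverse automorphism is a small addition the paper leaves implicit, but it does not change the argument.
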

\begin{proof}
    Assume that $G$ is of adjoint type. This implies that $X(G)=\Z\lge\alpha_i:i\rge$. But then 
    $$
    X(G_\sigma)=X(G)_\sigma=\Z\lge\alpha_i\rge_\sigma=\Z\lge\alpha_\eta:\eta\rge,
    $$
    which implies that $G_\sigma$ is of adjoint type.

    Assume now that $G$ is simply connected. Then $X^\vee(G)=\Z\lge\alpha_i^\vee\rge$. It follows from Theorem \ref{t:dynkinfixeddatum} that 
    $$
    [X^\vee(G_\sigma):\Z\lge\alpha_\eta^\vee:\eta\rge]=[\Z\lge\alpha_i^\vee:i\rge^\sigma:\Z\lge\alpha_\eta^\vee:\eta\rge]=[\Z\lge\textstyle\sum_{i\in\eta}\alpha_i^\vee:\eta\rge:\Z\lge\alpha_\eta^\vee:\eta\rge]=2^r,
    $$
    where $r$ is the number of orbits $\{a,b\}$ such that $a$ and $b$ are connected in the Dynkin diagram. Looking at all the Dynkin diagrams, it is easy to see that $r=0$ unless the Dynkin diagram has type $A_{2n}$, where $r=1$. This finishes the proof. 
\end{proof}

Using this corollary, we can easily list all the examples of $(G,G_\sigma)\leftrightarrow (G^\vee,G_\sigma^\vee)$, where $G$ is of simply connected or adjoint type. They are the following:

{\small
\begin{center}
\begin{tabular}{ |c|c|c|c|c| }
 \hline
 \multirow{2}{*}{Type}&\multicolumn{2}{|c|}{Simply connected} &\multicolumn{2}{|c|}{Adjoint type}\\
 \cline{2-5}
  & $(G,G_\sigma)$&$(G^\vee,G_\sigma^\vee)$& $(G,G_\sigma)$&$(G^\vee,G_\sigma^\vee)$\\
 \hline
 $A_{2n-1}$&$(\mathrm{SL}_{2n},\mathrm{Sp}_{2n})$&$(\mathrm{PGL}_{2n},\mathrm{SO}_{2n+1})$&$(\mathrm{PGL}_{2n},\mathrm{Sp}_{2n}/\{\pm I\})$&$(\mathrm{SL}_{2n},\mathrm{Spin}_{2n+1})$\\
 \hline
$A_{2n}$&$(\mathrm{SL}_{2n+1},\mathrm{SO}_{2n+1})$&$(\mathrm{PGL}_{2n+1},\mathrm{Sp}_{2n})$&$(\mathrm{PGL}_{2n+1},\mathrm{SO}_{2n+1})$&$(\mathrm{SL}_{2n+1},\mathrm{Sp}_{2n})$\\
 \hline
 $D_{n}$&$(\mathrm{Spin}_{2n},\mathrm{Spin}_{2n-1})$&$(\mathrm{SO}_{2n}/\{\pm I\},\mathrm{Sp}_{2n-2}/\{\pm I\})$&$(\mathrm{SO}_{2n}/\{\pm I\},\mathrm{SO}_{2n-1})$&$(\mathrm{Spin}_{2n},\mathrm{Sp}_{2n-2})$\\
 \hline
$D_4$&$(\mathrm{Spin}_{8},G_{2,\mathrm{sc}})$&$(\mathrm{SO}_{8}/\{\pm I\},G_{2,\mathrm{ad}})$&$(\mathrm{SO}_{8}/\{\pm I\},G_{2,\mathrm{ad}})$&$(\mathrm{Spin}_{8},G_{2,\mathrm{sc}})$\\
 \hline
 $E_6$&$(E_{6,\mathrm{sc}},F_{4,\mathrm{sc}})$&$(E_{6,\mathrm{ad}},F_{4,\mathrm{ad}})$&$(E_{6,\mathrm{ad}},F_{4,\mathrm{ad}})$&$(E_{6,\mathrm{sc}},F_{4,\mathrm{sc}})$\\
 \hline
\end{tabular}    
\end{center}
}

\medskip

There is one more interesting example in type $D$: $(\mathrm{SO}_{2n},\mathrm{SO}_{2n-1})^\vee=(\mathrm{SO}_{2n},\mathrm{Sp}_{2n-2})$.

One could ask whether the fixed point subgroup is connected. We present the corresponding result below, however will not use it in the paper.
\begin{theorem}\
    \begin{itemize}
        \item [(i)] If $G$ is simply-connected, then the fixed point subgroup $G^\sigma$ is connected.
        \item [(ii)] If $G$ is of adjoint type, then $G^\sigma$ is connected if and only if $G$ is of type $A_{2n}$, $E_6$, or $D_4$ with $\sigma$ of order $3$.
    \end{itemize}
\end{theorem}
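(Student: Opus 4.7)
The plan is to treat part (i) by invoking a classical theorem, and part (ii) by reducing to an abelian cohomology calculation via the simply connected cover. For part (i), I would simply cite Steinberg's theorem: if $G$ is simply connected semisimple over an algebraically closed field and $\sigma$ is a finite-order automorphism, then $G^\sigma$ is connected (Steinberg, \emph{Endomorphisms of linear algebraic groups}, Theorem 8.1). Since Dynkin automorphisms have order $1$, $2$, or $3$, this applies directly.

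For part (ii), let $\pi \colon G_{\mathrm{sc}} \twoheadrightarrow G_{\mathrm{ad}}$ be the simply connected cover of $G = G_{\mathrm{ad}}$, with kernel $Z := Z(G_{\mathrm{sc}})$. Since Dynkin automorphisms preserve a pinning, $\sigma$ lifts canonically to $G_{\mathrm{sc}}$, making $1 \to Z \to G_{\mathrm{sc}} \to G_{\mathrm{ad}} \to 1$ a $\sigma$-equivariant central extension. Taking $\sigma$-fixed points yields an exact sequence of pointed sets
\begin{equation*}
    1 \longrightarrow Z^\sigma \longrightarrow G_{\mathrm{sc}}^\sigma \longrightarrow G_{\mathrm{ad}}^\sigma \xrightarrow{\;\delta\;} H^1(\lge\sigma\rge, Z).
\end{equation*}
By (i), $G_{\mathrm{sc}}^\sigma$ is connected, so its image in $G_{\mathrm{ad}}^\sigma$ is exactly the identity component $(G_{\mathrm{ad}}^\sigma)^0$ (a closed connected subgroup of finite index). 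Therefore $\pi_0(G_{\mathrm{ad}}^\sigma) \hookrightarrow H^1(\lge\sigma\rge, Z)$, and a Lang--Steinberg-type argument on a $\sigma$-stable maximal torus of $G_{\mathrm{sc}}$ shows that $\delta$ is surjective, yielding the isomorphism $\pi_0(G_{\mathrm{ad}}^\sigma) \cong H^1(\lge\sigma\rge, Z)$.

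The theorem then reduces to a case-by-case computation of this cohomology group, using the known description of $Z$ and of the $\sigma$-action on it. For $A_n$, $Z = \Z/(n+1)\Z$ with $\sigma$ acting by $x \mapsto -x$, giving $H^1 = \Z/\gcd(n+1,2)\Z$ --- nontrivial precisely when $n$ is odd, which isolates $A_{2n}$ as the connected case in type $A$. For $E_6$, $Z = \Z/3\Z$ with $\sigma$ by inversion and $H^1 = 0$. For $D_4$ with the triality $\sigma$ of order $3$, the action on $Z = (\Z/2\Z)^2$ cyclically permutes the three nontrivial elements, and a direct computation gives $\ker N = \mathrm{im}(1 - \sigma)$, hence $H^1 = 0$. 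The order-$2$ cases in type $D$ are handled analogously from the explicit $\sigma$-action on $Z$, which is $\Z/4\Z$ for $n$ odd and $(\Z/2\Z)^2$ for $n$ even. The main obstacle I foresee is establishing surjectivity of $\delta$ (a non-abelian cohomological step that needs care over $\C$), together with the careful bookkeeping for $D_n$ --- one must correctly identify the $\sigma$-action on the individual elements of $Z(\mathrm{Spin}_{2n})$, since the specific pattern of fixed-versus-swapped elements is exactly what governs the vanishing of $H^1$.
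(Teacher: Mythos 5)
The paper's own ``proof'' is a single sentence citing Steinberg's memoir (8.2 for part~(i); 9.7--9.9 for part~(ii)), so it does not supply the cohomological computation you propose. For part~(i) you cite Steinberg~8.1 rather than~8.2, but the idea is the same, so that is fine.

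Part~(ii) is where your argument departs from the paper, and there are two genuine gaps. First, the surjectivity of $\delta\colon G_{\mathrm{ad}}^\sigma \to H^1(\lge\sigma\rge,Z)$ is not a ``Lang--Steinberg-type'' fact over $\C$: Lang's theorem is a positive-characteristic statement, and over $\C$ the Lang map $t\mapsto t^{-1}\sigma(t)$ on a $\sigma$-stable torus $T$ has image a \emph{subtorus} $(\sigma-1)T$ that need not contain all of $Z\cap\ker N_T$. Indeed $H^1(\lge\sigma\rge,T(\C))\cong H^2(\lge\sigma\rge,X_*(T))$, which can be nonzero, so $H^1(\lge\sigma\rge,G_{\mathrm{sc}})$ need not vanish and the exact sequence gives only an injection $\pi_0(G_{\mathrm{ad}}^\sigma)\hookrightarrow H^1(\lge\sigma\rge,Z)$, not an isomorphism. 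Second, even if one accepts the identification, the $H^1$ computation does not reproduce the statement of the theorem for type $D_n$ with $n$ even: there $Z(\mathrm{Spin}_{2n})\cong(\Z/2)^2$ and the pinning-preserving involution fixes the ``vector'' element $z$ and swaps the two spin elements $z',z''$, so that $N(z')=z'z''=z$, giving $\ker N=\{1,z\}$, while $(1-\sigma)(z')=z'z''=z$ gives $\mathrm{im}(1-\sigma)=\{1,z\}$; hence $H^1=0$, predicting connectedness, whereas the theorem asserts disconnectedness for $D_n$ with $\sigma$ of order $2$. You flagged the $D_n$ bookkeeping as the delicate point, and indeed it is fatal to the argument as written; the correct invariant used by Steinberg is not $H^1(\lge\sigma\rge,Z)$, and the proposal does not recover the stated case distinction.
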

\begin{proof}
    Follows from 8.2, 9.7, 9.8 and 9.9 in \cite{St}.
\end{proof}

The last statement of Theorem \ref{t:dynkinfixeddatum} implies that we have an embedding 
$$
\mathrm{Rep}\,G_\sigma^\vee\hookrightarrow\mathrm{Rep}\,G^\vee
$$
as categories, sending the irreducible representation of $G_\sigma^\vee$ of highest weight $\lambda$ to the irreducible representation of $G^\vee$ of highest weight $\lambda$.

\begin{remark}
{\rm
    It is not an embedding of tensor categories. Example \ref{e:notmonoidal} gives a counterexample to this. 
}
\end{remark}

Since $\sigma$ acts on $X_+$ and $X_+^\vee$, then it acts on the tensor categories $\operatorname{Rep}G$ and $\operatorname{Rep}G^\vee$ by sending the irreducible representation of highest weight $\lambda$ to the irreducible representation of highest weight $\sigma^{-1}(\lambda)$. By the Tannakian formalism, the action of $\sigma$ on $\operatorname{Rep}G^\vee$ induces a compatible automorphism of $G^\vee$, which, according to \cite[Lemma 4.1]{Ho}, turns out to be a Dynkin automorphism.

Let $\lambda$ be a $\sigma$-invariant weight of $G^\vee$. Let $V(\lambda)$ and $W(\lambda)$ be the corresponding highest weight representations of $G^\vee$ and $G_\sigma^\vee$, respectively. Then $\sigma$ acts on $V(\lambda)$ by the rule 
$$
\sigma(g\cdot v)=\sigma(g)\cdot \sigma(v)\qquad g\in G,\mbox{ }v\in V(\lambda).
$$
This action is conventionally induced by letting $\sigma$ send the highest weight vector to itself. There is a remarkable formula due to Jantzen, originally proved in \cite{Ja} (for a modern treatment and links to various proofs look in 
\cite[1.1.1]{HS}):
\begin{theorem}[Jantzen's twining formula]\label{t:Jantzen}
    Choose $\mu\in X(G_\sigma^\vee)$. Then
    $$
    \mathrm{tr}(\sigma|V_\mu(\lambda))=\dim W_\mu(\lambda),
    $$
    where $V_\mu(\lambda)$ and $W_\mu(\lambda)$ denote the corresponding weight spaces. In particular, we have the global identity
    $$
    \mathrm{tr}(\sigma|V(\lambda))=\dim W(\lambda).
    $$
\end{theorem}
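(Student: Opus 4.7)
The plan is to encode the left-hand side as a single generating function: set $\chi^\sigma_\lambda := \sum_\mu \mathrm{tr}(\sigma\,|\,V_\mu(\lambda))\, e^\mu$, the \emph{twining character} of $V(\lambda)$. Since $\sigma$ interchanges $V_\mu(\lambda)$ and $V_{\sigma\mu}(\lambda)$, only $\sigma$-invariant weights contribute nonzero diagonal entries, so $\chi^\sigma_\lambda$ is supported on $(X^\vee)^\sigma = Y^\vee$---the same weight lattice on which $\mathrm{ch}\,W(\lambda)$ lives by Theorem \ref{t:dynkinfixeddatum}. It therefore suffices to prove the character identity $\chi^\sigma_\lambda = \mathrm{ch}\,W(\lambda)$, from which the per-weight statement drops out by reading off the coefficient of $e^\mu$.

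To establish this identity, I would apply the BGG resolution of $V(\lambda)$ by Verma modules $C_k = \bigoplus_{\ell(w)=k} M(w\cdot\lambda)$. Because $\lambda$ and $\rho$ are $\sigma$-invariant, $\sigma$ lifts to an endomorphism of the whole complex permuting the summands by $w\mapsto \sigma w \sigma^{-1}$, and taking the $\sigma$-Lefschetz trace yields
$$
\chi^\sigma_\lambda \;=\; \sum_{w\in W^\sigma} (-1)^{\ell(w)}\, \chi^\sigma(M(w\cdot\lambda)),
$$
with non-$\sigma$-invariant pairs $M(w\cdot\lambda),\, M(\sigma(w)\cdot\lambda)$ contributing zero trace as off-diagonal blocks. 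Using $M(w\cdot\lambda)\simeq U(\n^-)\otimes \C_{w\cdot\lambda}$, a cycle-index calculation grouping negative root spaces by $\sigma$-orbit identifies $\chi^\sigma(M(w\cdot\lambda))$ with $e^{w\cdot\lambda}/D_\sigma$, where the twisted denominator $D_\sigma$ factors as an explicit product over $\sigma$-orbits of positive roots of $G^\vee$.

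It remains to match the resulting expression with the Weyl character formula for $W(\lambda)$. This requires two ingredients: first, $W^\sigma \cong W_{G_\sigma^\vee}$ with matching length functions, where each simple reflection $s_\eta \in W_{G_\sigma^\vee}$ corresponds to the longest element of the parabolic subgroup of $W$ generated by $\{s_i : i\in\eta\}$; second, $D_\sigma$ coincides with the Weyl denominator of $G_\sigma^\vee$. Both claims follow from the description of the simple (co)roots of $G_\sigma^\vee$ recorded in Theorem \ref{t:dynkinfixeddatum}, combined with the identification of $\sigma$-invariant weights with weights of $G_\sigma^\vee$.

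The main obstacle is the $A_{2n}$ case, where a $\sigma$-orbit $\eta = \{a,b\}$ with $a,b$ joined in the Dynkin diagram forces the cycle-index computation to produce an additional factor $(1+e^{-\bar\alpha_\eta})$; reconciling this with the Weyl denominator requires the doubled-coroot prescription $\alpha_\eta^\vee = 2\sum_{i\in\eta}\alpha_i^\vee$ of Theorem \ref{t:dynkinfixeddatum} together with the resulting nonreduced folded root system. This case-sensitive calibration of denominators is the technical core of the argument and is precisely the reason Jantzen's original proof proceeds type-by-type; uniform alternative proofs exist (via Littelmann paths or via twisted affine Kac--Moody algebras) but would take us substantially further afield.
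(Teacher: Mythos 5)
The paper does not contain a proof of Jantzen's twining formula: it states the theorem and cites Jantzen's original paper \cite{Ja}, the modern treatment in \cite[1.1.1]{HS}, and the overview \cite{Ho}. There is therefore no argument in the paper against which to compare; I will evaluate the sketch on its own terms.

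The overall strategy---passing to the twining character and computing it via the Lefschetz trace on the BGG resolution---is a legitimate and known line of proof. However, there is a genuine gap in the step you describe as ``$\sigma$ lifts to an endomorphism of the whole complex permuting the summands by $w \mapsto \sigma w \sigma^{-1}$,'' which implicitly gives $\sigma$ the ``obvious'' action (fixing the highest-weight line) on each $\sigma$-stable Verma summand $M(w\cdot\lambda)$, $w \in W^\sigma$. That cannot be arranged compatibly with the BGG differentials: $\sigma$ necessarily acts on the highest-weight vector of $M(w\cdot\lambda)$ by a sign $\epsilon(w) = \pm 1$, and this sign is nontrivial in general. Concretely, with the obvious action the twisted denominator you get from the cycle-index computation is $D_\sigma = \prod_{\mc O}(1 - e^{-\Sigma\mc O})$, the product over $\sigma$-orbits $\mc O$ of positive roots of $\g^\vee$, whereas the Lefschetz trace of the BGG complex at $\lambda = 0$ would give $\sum_{w\in W^\sigma}(-1)^{\ell_W(w)}e^{w\rho - \rho}$. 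These disagree already in type $A_3$: the orbit $\{\alpha_1,\alpha_3\}$ contributes $-e^{-\alpha_1-\alpha_3}$ to $D_\sigma$, but the corresponding term $w = s_1s_3$ in the alternating sum carries sign $(-1)^{\ell_W(s_1s_3)} = (-1)^2 = +1$. So without the extra sign $\epsilon(s_1s_3) = -1$ the formula fails even to give $\chi^\sigma(\C) = 1$. Equivalently, your claim that $W^\sigma \simeq W_{G_\sigma^\vee}$ ``with matching length functions'' is false at the level of parity: for any non-adjacent orbit $\eta = \{a,b\}$, the element $s_a s_b \in W^\sigma$ has even $W$-length but corresponds to a simple reflection of $W_{G_\sigma^\vee}$, of odd length. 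The correct sign is $\epsilon(w) = (-1)^{\ell_W(w) - \ell_{W_\sigma}(w)}$, and producing it from the $\sigma$-equivariant structure on the BGG resolution is precisely the technical heart of this route (it is treated carefully in Naito's work on twining characters and Kostant's theorem, for instance).

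This also means the framing in your last paragraph is miscalibrated. You single out $A_{2n}$ as ``the main obstacle'' and ``the technical core,'' but the sign discrepancy just described affects $A_{2n-1}$, the involution of $D_n$, and $E_6$ as well---indeed every case possessing a non-adjacent orbit of size two, which is all of them except the $D_4$ triality. The $A_{2n}$ case is additionally delicate because of the doubled coroot $\alpha_\eta^\vee = 2\sum_{i\in\eta}\alpha_i^\vee$ and the resulting non-reduced folded system (which you correctly note), but that is on top of, not instead of, the sign bookkeeping.
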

For a general overview of this theorem and links to various proofs, the reader may consult \cite{Ho}.

Using the constructions from Section \ref{s:gaudinalg}, we can see that the action of $\sigma$ on $\g$ and $\g^\vee$ induces an action on all Gaudin algebras, in particular, $\mc B^\lambda(\g^\vee)$ for $\sigma$-invariant $\lambda$. 
\begin{conjecture}[Conjecture 4.1, \cite{Ha}]\label{conj:isoofbigalg}
    There exists an isomorphism 
    $$
    \mc B^\lambda(\g^\vee)_\sigma\simeq \mc B^\lambda(\g_\sigma^\vee),
    $$
    where $\mc B^\lambda(\g^\vee)_\sigma$ denotes the quotient of $\mc B^\lambda(\g^\vee)$ by the ideal generated by $x-\sigma(x)$ for all $x\in \mc B^\lambda(\g^\vee)$.
\end{conjecture}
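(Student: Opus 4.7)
The plan is to first prove the universal analogue $\mc B(\g^\vee)_\sigma \simeq \mc B(\g_\sigma^\vee)$, and then to descend to the evaluation at $V(\lambda)$. The universal isomorphism will be constructed in Section~6 by identifying $\mc B(\g^\vee)$ with functions on the space of $\g^\vee$-opers (Section~4), describing the $\sigma$-action on this space (Section~5), and noting that the coinvariant quotient on the algebra side is dual to the fixed subspace of opers, which in turn is identified with the space of $\g_\sigma^\vee$-opers.

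To descend, observe that since $\lambda$ is $\sigma$-invariant, the $\sigma$-action on $V(\lambda)$ normalized by fixing the highest weight vector makes the evaluation $\pi_\lambda \otimes 1$ a $\sigma$-equivariant algebra homomorphism. Let $J$ denote its kernel on $\mc B(\g^\vee)$, a $\sigma$-stable ideal, so that $\mc B^\lambda(\g^\vee) = \mc B(\g^\vee)/J$; let $J_\sigma \subset \mc B(\g_\sigma^\vee)$ be the analogous kernel for $W(\lambda)$. The key claim is that, under the universal isomorphism from Step~1, the image of $J$ in $\mc B(\g^\vee)_\sigma$ corresponds exactly to $J_\sigma$. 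Granted this matching, quotienting both sides yields the desired isomorphism $\mc B^\lambda(\g^\vee)_\sigma \simeq \mc B^\lambda(\g_\sigma^\vee)$.

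The main obstacle is verifying this ideal-matching claim. One inclusion, which yields a surjection $\mc B^\lambda(\g^\vee)_\sigma \twoheadrightarrow \mc B^\lambda(\g_\sigma^\vee)$, should follow from the Tannakian naturality to be developed in Section~7: the irreducible representation $V(\lambda)$ of $\g^\vee$ restricts compatibly to $W(\lambda)$ under the embedding $\mathrm{Rep}\,G_\sigma^\vee \hookrightarrow \mathrm{Rep}\,G^\vee$ described after Theorem~\ref{t:dynkinfixeddatum}, so any element of $J$ automatically annihilates $W(\lambda)$ modulo $\sigma$-coinvariants. For the reverse inclusion, which is the hardest point, I would compare Poincar\'e series: by Theorem~\ref{t:freegensofbigalg}, both big algebras are quantizations of their respective Fomenko-Mishchenko algebras, and Jantzen's twining formula (Theorem~\ref{t:Jantzen}) guarantees that $\mathrm{tr}(\sigma|V_\mu(\lambda)) = \dim W_\mu(\lambda)$ for every $\sigma$-invariant weight $\mu$, which propagates through the functorial constructions to a dimension equality in each graded component. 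This forces the surjection to be an isomorphism and completes the descent.
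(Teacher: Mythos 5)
Your Step~1 (the universal isomorphism $\mc B(\g^\vee)_\sigma\simeq\mc B(\g_\sigma^\vee)$ via fixed points of opers) matches the paper's route through Sections~5--6. The problems are in the descent to $\mc B^\lambda$, where both halves of your ideal-matching argument have genuine gaps.

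For the easy inclusion, your claim that Tannakian naturality of $\mathrm{Rep}\,G_\sigma^\vee\hookrightarrow\mathrm{Rep}\,G^\vee$ makes elements of $J$ ``automatically annihilate $W(\lambda)$ modulo $\sigma$-coinvariants'' is not an argument: this embedding sends $W(\lambda)$ to $V(\lambda)$ as objects but is not monoidal (Remark after Theorem~\ref{t:dynkinfixeddatum} and Example~\ref{e:notmonoidal}), and in particular $W(\lambda)$ is not a restriction or sub/quotient of $V(\lambda)$ on which $\mc B(\g^\vee)$ would act. The paper instead derives the surjection by a density argument over generic $\chi$: if $Q$ dies in $\mc B^\lambda(\g^\vee)$, then $Q(\chi)=0$ for all $\chi\in\h_\sigma$, which via the established isomorphisms $\mc B^\lambda_\chi(\g^\vee)_\sigma\simeq\mc B^\lambda_{\chi'}(\g_\sigma^\vee)$ for compatible pairs forces $Q'(\chi')=0$ for $\chi'$ in a dense set (Corollary~\ref{cor:genericcomppair}), hence $Q'=0$. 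You would need some version of this.

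For the hard inclusion, the Poincar\'e series comparison via Jantzen does not go through. Jantzen's twining formula computes $\mathrm{tr}(\sigma|V_\mu(\lambda))=\dim W_\mu(\lambda)$; it is a statement about traces on weight spaces, not about graded dimensions of $\sigma$-coinvariants of the algebra $\mc B^\lambda(\g^\vee)$. These are not interchangeable: for an involution $\sigma$, the number of $+1$-eigenvectors in $V(\lambda)$ is $(\dim V(\lambda)+\mathrm{tr}\,\sigma)/2$, not $\mathrm{tr}\,\sigma=\dim W(\lambda)$, and there is no natural grading on the big algebra whose graded Euler characteristic is controlled by the weight-space traces. The claim that this ``propagates through the functorial constructions'' is precisely the missing proof. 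In fact the paper's logic runs in the opposite direction: it proves the conjecture by a module-theoretic argument (Lemmas~\ref{l:splitofmaps}, \ref{l:restrofinvtokostant}, \ref{l:basechangeofbig}, then a Cayley--Hamilton/torsion-freeness argument showing the kernel $I$ vanishes), and only afterwards (end of Section~8) interprets the \emph{result} as saying that the global Jantzen identity is equivalent to the absence of eigenvectors negated by $\sigma$. So Jantzen's formula is an output, not an input, and your proposed reverse inclusion is missing its actual content. The heavy lifting you omit is exactly Section~8: showing $\mc B^\lambda\otimes_R\C_\chi\simeq\mc B^\lambda_\chi$ for generic $\chi$, and deducing $I=0$ from $I\otimes\C_{\chi'}=0$ using splitting, finite generation over $R_\sigma$, Cayley--Hamilton, and torsion-freeness inside $\mc B^\lambda(\g^\vee)$.
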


Clearly, we are interested not in finding an arbitrary isomorphism, but in finding a "good" one, and part of the conjecture lies in finding the proper meaning of "good". This question motivates the whole paper.

But first of all, let us try to see how the action of $\sigma$ on the categories of representations looks in particular examples. We first describe a general pattern. Using the description of the fundamental coweights of $\g_\sigma$, note that if
$$
\lambda=\sum_{\eta}a_\eta\omega_\eta^\vee\in X_+(G_\sigma^\vee),
$$
then the corresponding element in $X_+(G^\vee)$ is 
$$
\lambda=\sum_{\eta}a_\eta\sum_{i\in \eta}\omega_i^\vee\in X_+(G^\vee).
$$

\subsection{Type $A_{2n-1}$}
Take the unique Dynkin automorphism of $\mathrm{SL}_{2n}$, which has the form
$$
\sigma(A)=-J(A^T)^{-1}J^{-1},
$$
where $J_{ij}=(-1)^{i}\delta_{i+j,2n+1}$. Then the fixed point subgroup is $\mathrm{Sp}_{2n}$ and the Langlands duals are $\mathrm{PGL}_{2n}$ and $\mathrm{SO}_{2n+1}$, respectively. However, we would like to use conventions from \cite[Sections 16,17]{FH}, hence we change the basis of $\C^{2n}$ so that $J$ is sent to
$$
J_0:=\begin{pmatrix}
    0&-I_n\\
    I_n&0
\end{pmatrix}.
$$
Note that the matrix of this basis change fixes the maximal torus of $\mathrm{SL}_{2n}$, hence the base change changes the lattice by an element of the Weyl group. Therefore, after we get a weight $\lambda$ of $\mathrm{PGL}_n$ corresponding to a dominant weight of $\mathrm{Sp}_{2n}$, the desired fundamental weight will be the dominant weight of $\mathrm{PGL}_n$ lying in the same Weyl orbit as $\lambda$.

Let's recall the conventions in \cite[Sections 16,17]{FH}. The maximal tori of $\mathrm{Sp}_{2n}$ and $\mathrm{SL}_{2n}$ are precisely the diagonal matrices lying in these groups. The coweights of $\mathrm{SL}_{2n}$ are $H_i:=E_{ii}$ and the coweights of $\mathrm{Sp}_{2n}$ are $H'_i:=E_{ii}-E_{n+i,n+i}=H_i-H_{n+i}$. The fundamental Weyl chambers in these cases are
\begin{align*}
    \Bigg\{\sum_{i=1}^{2n}a_iH_i&:\sum_i a_i=0,a_1\ge a_2\ge\ldots\ge a_{2n}\Bigg\},\\
    \Bigg\{\sum_{i=1}^{n}a_iH'_i&:a_1\ge a_2\ge\ldots\ge a_{n}\Bigg\}.
\end{align*}

Note that if we compose the map between the coweights with the element
$$
\begin{pmatrix}
    1&2&\cdots&n&n+1&n+2&\cdots&2n-1&2n\\
    1&2&\cdots&n&2n&2n-1&\cdots&n+2&n+1
\end{pmatrix}
$$
of the Weyl group of $\mathrm{PGL}_{2n}$, then we will get the map
$$
H'_i\mapsto H_{i}-H_{2n+1-i},
$$
and it is easy to see that it sends the dominant coweights of $\mathrm{Sp}_{2n}$ to dominant coweights of $\mathrm{SL}_{2n}$. Note that in terms of representations of $\mathrm{SL}_{2n}$, the map is

$$
(a_1,\ldots,a_n)\mapsto (a_1+a_1,\ldots,a_1+a_n, a_1-a_n,\ldots,a_1-a_2,0).
$$
This is how this map looks in terms of the associated Young diagrams:

\begin{center}
    \ytableausetup
{boxsize=1.25em}
\ytableausetup{aligntableaux=top}
\ydiagram{4,2,1}\qquad$\longmapsto$\qquad
\ytableaushort
{\none}
* {8,6,5,4,4,4}
* [*(pink)]{4,4,4,3,2}
*[*(lightgray)]{4+4,4+2,4+1}
\end{center}
where the pink-white rectangle to the left of the grey diagram has size $a_1\times 2n$ and the colored diagram is the image of the map.

It follows from the above that the action of $\sigma$ on the coweights of $\mathrm{SL}_{2n}$ can be rewritten as
$$
(a_1,\ldots,a_{2n})\mapsto (-a_{2n},\ldots,-a_1),\qquad (1,\ldots,1)=0,
$$
which corresponds to dualizing a representation. Therefore, 
$$
V(\sigma(\lambda))=V(\lambda)^*
$$
as representations of $\mathrm{PGL}_{2n}$ and the irreducible $\sigma$-invariant representations are exactly the self-dual ones.

\begin{example}\label{e:notmonoidal}
{\rm
    Let $n=2$ and consider partitions $(2,2)$ and $(1,1)$ giving representations of $\mathrm{SO}_{5}$. They get mapped to $\lambda=(4,2,2)$ and $\mu=(2,2)$, respectively, giving representations of $\mathrm{PGL}_{4}$. It is easy to see using the Littlewood-Richardson rule that it contains the representation $(6,3,2,1)\sim (5,2,1)$:
    \begin{center}
    \ytableausetup
{boxsize=1.25em}
\ytableausetup{aligntableaux=top}
\ytableaushort
{\none\none\none\none 11,\none\none 2,\none,2}
* {6,3,2,1}
*[*(white) \bullet]{4,2,2}
\end{center}
This representation is not $\sigma$-invariant since its dual is $(-1,-2,-3,-6)\sim (5,4,3)$, hence $V(\lambda)\otimes V(\mu)$ does not come from a representation of $\mathrm{SO}_5$.
}
\end{example}

\subsection{Type $D_{n}$, $n\ge 5$}
We have a unique Dynkin involution here too. Consider the pair $\mathrm{SO}_{2n},\mathrm{SO}_{2n-1}$. The Langlands duals are $\mathrm{SO}_{2n},\mathrm{Sp}_{2n-2}$. If $\omega_1,\ldots, \omega_{n}$ are the fundamental coweights of $\mathfrak{so}_{2n}$, then $\omega_1,\ldots, \omega_{n-2},\omega_{n-1}+\omega_{n}$ are the fundamental coweights of $\mathfrak{so}_{2n-1}$. Following the conventions of \cite[Sections 16, 18]{FH}, we see that the fundamental weights of $\mathfrak{sp}_{2n-2}$ are $\omega'_i=L'_1+\ldots+L'_i$, and the fundamental weights of $\mathfrak{so}_{2n}$ are the same except
$$
\omega_{n-1}=\frac{L_1+\ldots+L_n}2,\qquad \omega_{n}=\frac{L_1+\ldots+L_{n-1}-L_n}2.
$$
We get from $\omega'_i=\omega_i$ for $i<n-1$ and $\omega'_{n-1}=\omega_{n-1}+\omega_n$ that $L'_i$ correspond to $L_i$ for $i<n$. In other words, the representation of $\mathrm{Sp}_{2n-2}$ with highest weight $a_1L'_1+\ldots+a_{n-1}L'_{n-1}$ is sent to the representation of $\mathrm{SO}_{2n}$ with highest weight $a_1L_1+\ldots+a_{n-1}L_{n-1}$.

\section{Spaces of opers and notation}\label{s:generaltransfer}
For a precise treatment of opers, the reader may consult \cite[Section 3]{BD1}, \cite[Section 1]{F1}, \cite[Section 4]{FFT}, or \cite[Part I]{FG}. We also recommend \cite{BD2} and \cite[Section 4]{F3} for an accessible exposition.

\subsection{Definition of opers}
Let $G$ be a simple algebraic group of adjoint type with a fixed Borel subgroup $B$ and a maximal torus $H$. Denote by $N$ the unipotent radical $[B,B]$ of $B$ and by $\g,\b,\h,\n$ the corresponding Lie algebras. As always, denote by $\alpha_i$ and $\alpha_i^\vee$ simple roots and simple coroots, respectively. There is a unique open $B$-orbit 
$$
\mathbf O\subset [\n,\n]^\perp/\b\simeq \bigoplus_i\g_{-\alpha_i},
$$
which has the following properties:
\begin{itemize}
    \item It it invariant under the action of $N$ and $B/N\simeq H$ acts simply-transitively on it.
    \item $\mathbf O=\{\sum_{i}a_if_i:\forall_i\,a_i\ne 0\}$, where $f_i$ are generators of negative simple root spaces $\g_{\alpha_i}$.
\end{itemize}

From now and till the rest of the paper, we let $X$ be either a smooth connected curve over $\C$, the formal disc $D:=\spec \C\jet t$ or the punctured formal disc $D^\times:=\spec \C\rjet t$. Let $\Omega_X$ denote the canonical line bundle on $X$.

\begin{definition}
    Let $\mc F_B$ be a principal $B$-bundle on $X$ and $\mc F_G:=\mc F_B\times^BG$ its extended principal $G$-bundle on $X$. For a connection $\nabla$ on $\mc F_G$ we define its {\bfseries relative position with respect to $\mc F_B$} to be the section $\nabla/\mc F_B\in\Gamma((\g/\b)_{\mc F_B}\otimes \Omega_X)$ constructed as follows:
\begin{enumerate}
    \item Choose a connection $\nabla_B$ on $\mc F_B$ and lift it to $\nabla_G:=\nabla_B\times^BG$ on $\mc F_G$.
    \item Take the difference $\nabla-\nabla_G\in \Gamma(\g_{\mc F_G}\otimes \Omega_X)$ and define $\nabla/\mc F_B$ to be its image in $\Gamma((\g/\b)_{\mc F_B}\otimes \Omega_X)$ via the quotient map $\g\to\g/\b$.
\end{enumerate}
\end{definition}

We have to show that $\nabla/\mc F_B$ is well-defined. Suppose that we have chosen a different connection $\nabla_B'$. Then the definitions of $\nabla/\mc F_B$ differ by the image of $\nabla_G-\nabla'_G$ in $\Gamma((\g/\b)_{\mc F_B}\otimes \Omega_X)$. Since both $\nabla_G$ and $\nabla_G'$ are extended from $\mc F_B$, then $\nabla_G-\nabla'_G$ actually lies in $\Gamma(\b_{\mc F_B}\otimes \Omega_X)$. The split exact sequence
$$
0\to \b\to \g\to \g/\b\to 0
$$
gives rise to the exact sequence
$$
0\to \Gamma(\b_{\mc F_B}\otimes \Omega_X)\to \Gamma(\g_{\mc F_B}\otimes \Omega_X)\to \Gamma((\g/\b)_{\mc F_B}\otimes \Omega_X),
$$
which shows that the image of $\nabla_G-\nabla'_G$ in $\Gamma((\g/\b)_{\mc F_B}\otimes \Omega_X)$ is $0$, as desired.

\begin{definition}\label{def:opers}
    A {\bfseries $G$-oper on $X$} is a triple $(\mc F_G,\mc F_B,\nabla)$, where
    \begin{itemize}
        \item $\mc F_B$ is a principal $B$-bundle on $X$,
        \item $\mc F_G=\mc F_B\times^BG$ is the associated principal $G$-bundle on $X$,
        \item $\nabla$ is a connection on $\mc F_G$ such that the values of $\nabla/\mc F_G$ lie in $\mathbf O_{\mc F_B}$. In other words,
        $$
        \nabla/\mc F_G\in \Gamma(\mathbf O_{\mc F_B}\otimes \Omega_X)\subset \Gamma((\g/\b)_{\mc F_B}\otimes \Omega_X).
        $$
    \end{itemize}
    A {\bfseries $\g$-oper on $X$} is a $G$-oper on $X$ for the adjoint form $G$ of $\g$.
\end{definition}

The last condition should be thought of as a certain transversality condition, meaning that $\nabla$ is as far from a connection induced from a connection on $\mc F_B$ as possible. For instance, is easy to see from the arguments above that $\nabla/\mc F_B=0$ if and only if $\nabla$ is induced from a connection on $\mc F_B$. 

Before going further, let us show a local description of opers. Choose a closed point $x\in X$, an {\'e}tale coordinate $t$ around $x$ and a trivialization of $\mc F_B$ around $x$. Then the condition on $\nabla$ is equivalent to $\nabla$ having a form
\begin{equation}\label{eq:localexprforopers}
    \nabla=\nabla^0+\sum_if_i\phi_i(t)dt+\mathbf v(t)dt,
\end{equation}
where $\nabla^0$ is the flat connection on $\mc F_B$ given by the trivialization, $\phi_i\in \C\jet t$ don't vanish at $0$ and $\mathbf v(t)\in b\jet t=\b\otimes \C\jet t$. This definition depends on the trivialization of $\mc F_B$, and we would like to identify the same objects corresponding to different trivializations. Any two trivializations of $\mc F_B$ differ by an element $g\in B\jet t$, hence $B$ acts on (\ref{eq:localexprforopers}). The action is called the {\bfseries gauge} action and takes the form
$$
g\cdot \left(\nabla^0+\sum_if_i\phi_i(t)dt+\mathbf v(t)dt\right)=\nabla^0+\sum_i\mathrm{Ad}_g(f_i)\phi_i(t)dt+\mathrm{Ad}_g(\mathbf v(t))dt-g^{-1}\cdot d(g).
$$
Therefore, the set of opers can be identified with
$$
\frac{\left\{\nabla^0+\sum_{i=1}^l\psi_if_i+\mathbf v(t):\mathbf v(t)\in\b\jet t,\psi_i(0)\ne 0\right\}}{B\jet t}.
$$

We can extend Definition \ref{def:opers} to the $R$-family of opers for any $\C$-algebra $R$, requiring that all the constructions are made over $X\times_{\spec\C}\spec R$ and use $G\times_{\spec\C}\spec R$. Note that all principal bundles are required to be locally trivial in the {\'e}tale topology, so in particular, $\mc F_B$ are trivial in opers on $D$. This way, we define a functor on $\operatorname{AffSch}^{\mathrm{op}}_\C$, which we denote by $\op_G(X)$. Similarly, we can write the local description of opers over any $\C$-algebra $R$. In this survey, we will give the descriptions of all our constructions for $\C$-points only, and the precise treatment of $R$-points can be found in \cite[1.1-1.5]{FG}. Before we provide a theorem that guarantees that this space has a nice structure in the cases interesting to us, we will give several new definitions.

\subsection{Opers with singularities}
\begin{definition}
    Let $x\in X$ be a closed point. A {\bfseries $G$-oper in $X$ with singularity of order $k$ at $x$} is a triple $(\mc F_G,\mc F_B,\nabla)$, where
    \begin{itemize}
        \item $\mc F_B$ is a principal $B$-bundle on $X$,
        \item $\mc F_G=\mc F_B\times^BG$ is the associated principal $G$-bundle on $X$,
        \item $\nabla$ is a connection on $\mc F_G$ with pole of order $k$ at $x$ such that
        $$
        \left[\nabla-\nabla_G\mbox{ }\mathrm{mod}\mbox{ }\b_{\mc F_B}\otimes \Omega_X(k\cdot x)\right]\in \Gamma(\mathbf O_{\mc F_B}\otimes \Omega_X(k\cdot x))\subset \Gamma((\g/\b)_{\mc F_B}\otimes \Omega_X(k\cdot x)),
        $$
    \end{itemize}
        where $\nabla_G$ is some connection on $\mc F_G$ extended from a connection on $\mc F_B$ and $\Omega_X(k\cdot x)$ denotes the tensor product of $\Omega_X$ with the line bundle on $X$ corresponding to the divisor $k\cdot x$. The corresponding functor is denoted by $\op_{G,x}^{\mathrm{ord}_k}(X)$. 
        
        Similarly, for any divisor $C$ on $X$ we can define {\bfseries opers with singularities at $C$} and denote this space by $\op_{G,C}(X)$.
\end{definition}

In terms of a local trivialization at $x$, such connections take the form
\begin{equation}\label{eq:localoperswithsing}
    \nabla=\nabla^0+t^{-k}\left(\sum_if_i\phi_i(t)dt+\mathbf v(t)dt\right),
\end{equation}
where $\phi_i(t)\in \C\jet t$ don't vanish at $0$. We can write
\begin{align*}
    \op_{G,x}^{\mathrm{ord}_k}(D)&\simeq \frac{\left\{\nabla^0+t^{-k}\left(\sum_{i=1}^l\psi_if_i+\mathbf v(t)\right):\mathbf v(t)\in\b\jet t, \psi_i(0)\ne 0\right\}}{B\jet t}\\
    &\simeq \frac{\left\{\nabla^0+t^{-k}\left(\sum_{i=1}^lf_i+\mathbf v(t)\right):\mathbf v(t)\in\b\jet t\right\}}{N\jet t}.
\end{align*}

Let $(T_x^\times)^{\otimes (m-1)}$ denote the $\C^\times$-bundle over $x$ corresponding to $(T_xX)^{\otimes (m-1)}$. Following \cite[4.3]{FFT}, we define the $m$-residue map
$$
\operatorname{Res}^m_x:\op_G^{\mathrm{ord}_m}(X)\to (\h\git W)\times^{\C^\times}(T_x^\times)^{\otimes (m-1)}
$$
sending the connection
$$
\nabla^0+t^{-k}\left(\sum_{i=1}^lf_i+\mathbf v(t)\right)
$$
to the projection of $\sum_if_i+\mathbf v(0)\in\g$ to $\g\git G\simeq \h\git W$. The twist by $T_x$ is introduced to make the map independent of the choice of coordinate $t$.

We have the following results about the spaces constructed:
\begin{theorem}[3.1.11 and 3.8.23, \cite{BD1}]\
    \begin{itemize}
        \item[(i)] If $X$ is a smooth complete curve over $\C$ or $X=D$, $C$ is a divisor on $X$, then $\mathrm{Op}_{\g,C}(X)$ is representable by an affine scheme.
        \item[(ii)] If $X$ is any smooth curve or $X=D^\times$, then $\mathrm{Op}_\g(X)$ is representable by an ind-scheme.
    \end{itemize}
\end{theorem}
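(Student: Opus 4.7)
The plan is to reduce representability to a local canonical form. First, I would work in local coordinates on $D$ (or formally at each point of $X$), where the excerpt already exhibits set-theoretic descriptions such as
\begin{equation*}
    \op_{\g,x}^{\mathrm{ord}_k}(D)\simeq\Bigl\{\nabla^0+t^{-k}\bigl(\textstyle\sum_{i=1}^l f_i+\mathbf v(t)\bigr)dt:\mathbf v(t)\in\b\jet t\Bigr\}\big/N\jet t,
\end{equation*}
presenting the oper space as a quotient of an infinite-dimensional affine scheme by a pro-unipotent group. The first task is to upgrade this to a functor-of-points description over every test algebra $R$ and to verify that the $N\jet t$-action is free, a direct check on the gauge action of the leading term $\sum f_i$.

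The core input is the canonical-form theorem. I would complete $f:=\sum_{i=1}^l f_i$ to a principal $\mathfrak{sl}_2$-triple $(e,h,f)$ and set $V^{\mathrm{can}}:=\ker(\mathrm{ad}\,e)\cap\b$, which by principal-$\mathfrak{sl}_2$ representation theory yields a graded direct sum decomposition $\b=[f,\n]\oplus V^{\mathrm{can}}$ with $\dim V^{\mathrm{can}}=l$ and homogeneous summands indexed by the exponents of $\g$. A $t$-adic induction then shows that every $N\jet t$-orbit on the numerator above contains a unique representative with $\mathbf v(t)\in V^{\mathrm{can}}\jet t$: at each step one inverts $\mathrm{ad}\,f$ on a graded piece of $\n$, and the canonical decomposition singles out the correct solution. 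This yields an isomorphism of functors
\begin{equation*}
    \op_{\g,x}^{\mathrm{ord}_k}(D)\simeq t^{-k}V^{\mathrm{can}}\jet t\cdot dt,
\end{equation*}
and the right-hand side is a countable product of copies of $\mathbb A^1$, hence an affine scheme. The same argument yields $\op_\g(D)$ as an affine scheme (the case $k=0$).

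To globalize to a smooth complete curve $X$ with divisor $C$, I would first show that the underlying $B$-bundle of every oper is isomorphic to a single canonical bundle $\mc F_B^{\mathrm{can}}$, whose associated $H$-bundle corresponds to $\Omega_X^{\rho^\vee}$; this uses the transitivity of $H$ on the open orbit $\mathbf O$ to rigidify the $B$-reduction. Fixing this reduction, the canonical form globalizes to identify $\op_{\g,C}(X)$ with a torsor over $\bigoplus_{j=1}^l\Gamma\bigl(X,\Omega_X^{d_j}(d_j\cdot C)\bigr)$, which is a finite-dimensional affine space. Part (ii) then follows by taking colimits: one writes $\op_\g(D^\times)=\varinjlim_k\op_{\g,0}^{\mathrm{ord}_k}(D)$ (and analogously filters $\op_\g(X)$ for a general smooth $X$ by effective divisors). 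The canonical form guarantees the transition maps are closed embeddings of affine schemes, so the colimits are ind-affine schemes.

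The main obstacle is the canonical-form theorem: one must show, uniformly in the test algebra $R$, that $V^{\mathrm{can}}\jet t$ is a transverse slice for the $N\jet t$-action. This combines the principal-$\mathfrak{sl}_2$ decomposition of $\b$ with a $t$-adic convergence argument inverting $\mathrm{ad}\,f$ on $\n\jet t$ modulo the slice. Once this is in place, everything else is formal packaging in the language of (ind-)affine schemes.
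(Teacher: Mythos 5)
The paper gives no proof here: the theorem is cited verbatim from Beilinson--Drinfeld (3.1.11 and 3.8.23), so your task was really to reconstruct the argument that the paper delegates to the reference. Your reconstruction is essentially the standard Drinfeld--Sokolov/Beilinson--Drinfeld argument, and it is the same approach that the paper itself relies on implicitly (see the ``Canonical representatives'' discussion in Section 4.3 and the isomorphism (\ref{isowithvcan})). The pieces you identify --- the principal $\mathfrak{sl}_2$-decomposition $\b=[p_{-1},\n]\oplus V_{\mathrm{can}}$, the $t$-adic/height double induction reducing to a canonical slice, the uniqueness of the $B$-bundle $\mc F_B^0$ with $H$-type $\Omega_X^{\rho^\vee}$, the global identification with sections of twists of powers of $\Omega_X$, and the exhaustion of $\op_\g(D^\times)$ and $\op_\g(X)$ by closed affine subschemes --- are exactly what BD use, and you correctly flag the one genuinely delicate point (functoriality of the canonical slice over arbitrary test algebras).

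One imprecision worth correcting: the isomorphism $\op_{\g,x}^{\mathrm{ord}_k}(D)\simeq t^{-k}V_{\mathrm{can}}\jet t\,dt$ is not quite right as stated. Because the gauge term $g^{-1}dg$ does not carry the $t^{-k}$ prefactor, normalizing the leading term requires first conjugating by $\rho^\vee(t^{k})$, and afterwards the canonical slice acquires different $t$-twists on the different $\rho^\vee$-graded summands of $V_{\mathrm{can}}$: the degree-$d_j$ piece sits in $t^{-k(d_j+1)}\C\jet t$, so the correct model is $\bigoplus_j t^{-k(d_j+1)}V_{\mathrm{can},d_j}\jet t$, not a uniform $t^{-k}$ shift. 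Correspondingly, the global affine space is $\bigoplus_j\Gamma\!\bigl(X,\Omega_X^{\otimes(d_j+1)}((d_j+1)\cdot C)\bigr)$ with the exponents $d_j$ shifted by one. Neither error affects the conclusion --- the space is still a countable product of $\mathbb A^1$'s locally and a finite-dimensional affine space globally --- but the exact weights matter if you want to recover the $m$-residue map or the $\mathrm{Aut}\,\C\jet t$-equivariance that the paper uses later.
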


It appears that the condition on $\nabla$ in the definition of opers is tremendously restrictive.
\begin{theorem}
Let $G$ be of adjoint type. Then
    \begin{itemize}
        \item[(i)] For any $\g$-oper $(\mc F_G,\mc F_B,\nabla)$ on $X$ the bundle $\mc F_B$ is isomorphic to a certain canonical $B$-bundle $\mc F_B^0$, which doesn't depend on the oper.
        \item[(ii)] Let $X$ be a complete curve, $x\in X$ and consider the map $\op_\g(X)\to\op_\g(D_x)$ restricting an oper on $X$ to the formal disc at $x$. Then this map is a closed immersion.
    \end{itemize}
\end{theorem}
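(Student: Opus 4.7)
My plan is to handle the two parts separately, using the local description of opers as the main tool in each.

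For part (i), I would work locally. Choose a closed point of $X$, an étale coordinate $t$, and a trivialization of $\mc F_B$ in which the oper has the form
$$
\nabla = \nabla^0 + \sum_{i} \phi_i(t) f_i\, dt + \mathbf v(t)\, dt, \qquad \phi_i(0)\ne 0.
$$
Because $G$ is of adjoint type the simple roots $\alpha_i$ form a $\Z$-basis of $X(H)$, so there is a unique $h(t)\in H\jet t$ with $\mathrm{Ad}_{h(t)}(f_i) = \phi_i(t)\, f_i$ for all $i$; gauging by $h(t)^{-1}$ brings the oper into the canonical form $\nabla^0 + \bigl(\sum_i f_i + \mathbf v'(t)\bigr)dt$, whose stabilizer in the gauge group is exactly $N\jet t$. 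Passing from $t$ to another coordinate $s$ rescales $dt$ by $ds/dt$, and since $\mathrm{Ad}_{\rho^\vee(c)}f_i = c^{-1}f_i$, the unique gauge transformation restoring canonical form is $\rho^\vee(ds/dt)$. Consequently all coordinate-change transitions of $\mc F_B$ live in $\rho^\vee(\C^\times)\cdot N$, so the induced $H$-bundle is $\rho^\vee(\Omega_X)$, and the $B$-bundle is the extension $\mc F_B^0$ of this $H$-bundle to $B$ — a datum that depends only on $X$, not on the oper.

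For part (ii), I would use part (i) to give a uniform description of $\op_\g(X)$ and $\op_\g(D_x)$. After fixing $\mc F_B\simeq\mc F_B^0$ and choosing the canonical form of the oper, one standardly parametrizes opers by global sections of a sum of powers of the canonical bundle,
$$
\op_\g(X)\simeq \bigoplus_{i=1}^l \Gamma\bigl(X,\Omega_X^{\otimes d_i}\bigr),\qquad \op_\g(D_x)\simeq \bigoplus_{i=1}^l \C\jet{t}\,(dt)^{\otimes d_i},
$$
where $d_1,\dots,d_l$ are the degrees of a complete set of Segal–Sugawara generators (equivalently, of homogeneous generators of $S(\g)^\g$). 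Under these identifications, the restriction map $\op_\g(X)\to\op_\g(D_x)$ is the direct sum of Taylor-expansion maps on sections of $\Omega_X^{\otimes d_i}$. Since $X$ is connected and complete, a section of any line bundle vanishing to all orders at $x$ is identically zero, so the map is injective on $R$-points for every $\C$-algebra $R$. For scheme-theoretic closedness, observe that $\op_\g(X)$ is a finite-dimensional affine space (each $\Gamma(X,\Omega_X^{\otimes d_i})$ is finite-dimensional), and by Serre duality / linear algebra every linear functional on $\Gamma(X,\Omega_X^{\otimes d_i})$ is a finite $\C$-linear combination of Taylor-coefficient functionals; therefore the induced map of coordinate rings $\mc O(\op_\g(D_x))\to \mc O(\op_\g(X))$ is surjective, and the map is a closed immersion.

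The main obstacle I anticipate is bookkeeping rather than conceptual: checking carefully that the gauge-fixing in (i) can be carried out simultaneously on overlapping charts with consistent cocycles (giving a globally well-defined $\mc F_B^0$), and spelling out in (ii) the identification of $\op_\g(X)$ with $\bigoplus_i\Gamma(X,\Omega_X^{\otimes d_i})$ via Miura-type canonical forms so that the restriction map genuinely becomes Taylor expansion. Once these identifications are in place, the closed-immersion property reduces to a clean statement about sections of line bundles on complete curves.
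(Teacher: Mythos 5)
The paper proves both items by citing Beilinson--Drinfeld (3.1.10 and 3.3.3), so you are supplying an argument where the paper gives none; unfortunately, your proof of part (i) contains a genuine error, and part (ii) inherits it.

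Your local gauge computation for (i) is correct as far as it goes: with the $\mathbf v'$-form as normalization, the residual gauge group is $N\jet t$, and under a coordinate change the $H$-component of the transition is $\rho^\vee(ds/dt)$, so the induced $H$-bundle $\mc F_B/N$ is indeed $\rho^\vee(\Omega_X)$. But from ``the transitions lie in $\rho^\vee(\C^\times)\cdot N$'' you cannot conclude that $\mc F_B$ is the induced bundle $\rho^\vee(\Omega_X)\times^H B$. A $B$-bundle is not determined by its $H$-quotient: the extensions of a fixed $H$-bundle to $B$ form a torsor over $H^1\bigl(X,\underline{N}_{\rho^\vee(\Omega_X)}\bigr)$, which is nonzero on a complete curve (already $H^1(X,\Omega_X)=\C$ for $\mathrm{PGL}_2$). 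Your phrase ``the unique gauge transformation restoring canonical form is $\rho^\vee(ds/dt)$'' drops the $N$-part of the transition, and that $N$-part is nontrivial: for $\mathrm{PGL}_2$ the exact transition between strict canonical forms is $\rho^\vee(\phi')\exp\!\bigl(\tfrac{\phi''}{2\phi'}e\bigr)$, whose $N$-cocycle represents a \emph{nonzero} class in $H^1(X,\Omega_X)$. Concretely, on $X=\mathbb P^1$ your proposed $\mc F_B^0$ gives the $\mathbb P^1$-bundle $\mathbb P(\mc O\oplus\Omega^{-1})\simeq\mathbb F_2$, which admits no flat connection, whereas the $\mathbb P^1$-bundle of any $\mathrm{PGL}_2$-oper on $\mathbb P^1$ must be the trivial one $\mathbb P^1\times\mathbb P^1$ (the nonsplit extension). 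So the identification $\mc F_B^0\simeq\rho^\vee(\Omega_X)\times^H B$ is false, and more importantly your argument only pins down $\mc F_B/N$ and not $\mc F_B$ itself; for higher rank the $N$-part of the transition cocycle even depends on the oper data $\mathbf v'$ through cross-terms, so showing the resulting cohomology class is constant requires a genuine additional argument (this is exactly what is nontrivial in BD 3.1.10 and its reduction via the principal $\mathfrak{sl}_2$, cf.\ the paper's remark citing BD 3.3.10 for the construction of $\mc F^0$).

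Part (ii) is the standard argument once (i) is available: after fixing $\mc F_B\simeq\mc F_B^0$ and passing to canonical form, $\op_\g(X)$ becomes a torsor over $\bigoplus_i\Gamma(X,\Omega_X^{\otimes d_i})$, the restriction to $D_x$ is Taylor expansion, injectivity follows from connectedness of $X$, and surjectivity on coordinate rings follows because the $\Gamma(X,\Omega_X^{\otimes d_i})$ are finite-dimensional so their duals are spanned by Taylor-coefficient functionals (Serre duality is not actually needed, just finite-dimensionality plus injectivity). Two small cautions: the identification with $\bigoplus_i\Gamma(X,\Omega_X^{\otimes d_i})$ is only a torsor structure, not canonical linear, so you should phrase the argument in terms of the underlying affine structures; and, more seriously, the whole of (ii) rests on the incorrect step in (i), so it is not a self-contained proof until (i) is repaired.
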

\begin{proof}
    Item $(i)$ follows from \cite[3.1.10]{BD1}. Item $(ii)$ follows from \cite[3.3.3]{BD1}. 
\end{proof}
\begin{remark}
   {\rm
    It is helpful to know how to construct this bundle $\mc F_B^0$. Assume that we know such a bundle $\mc F^0$ for $\mathfrak{sl_2}$-opers. Let $\mathfrak{s}$ be a principal $\mathfrak{sl}_2$-subalgebra of $\g$ and $S$ be the corresponding $\mathrm{PGL}_2$-subgroup of the adjoint form $G$ of $\g$ with the Borel subgroup $B_2=B\cap \mathrm{PGL}_2$. Then for any $\mathfrak{sl_2}$-oper $(\mc F^0,\nabla)$ (we omitted the $S$-bundle for simplicity) the extended pair $(\mc F^0\times^{B_2}B,\nabla\times^SG)$ is a $G$-oper on $X$. Therefore, $\mc F_B^0\simeq \mc F^0\times^{S}G$. The precise construction of $\mc F^0$ is described in \cite[3.3.10]{BD1}.
   } 
\end{remark}

\subsection{Canonical representatives}\label{s:canrepr}
Choose a set of generators $\{f_i\}$, $i=1,\ldots,l$ of the negative simple root spaces of $\g$. Let $\rho^\vee\in\h$ be the sum of the fundamental coweights of $\g$ and $p_{-1}:=\sum_{i=1}^lf_i$. With respect to the grading on $\n$ defined by $\mathrm{ad}\,\rho^\vee$ (which in addition coincides with the height gradation), there exists a unique element $p_1$ of degree $1$ in $\n$ such that $\{p_{-1},2\rho^\vee,p_1\}$ is an $\mathfrak{sl}_2$ triple. 

Then we define $V_{\mathrm{can}}(\g)=V_{\mathrm{can}}\subset \n$ to be the $p_1$-invariant elements of $\n$.

\begin{theorem}[See \cite{FFT}, Sections 4.1-4.2]
    Let $G$ be of adjoint type. If the bundle $\mc F_B$ in $\g$-opers over $X$ is trivializable, then we have the isomorphisms:
\begin{align}
    V_{\mathrm{can}}\jet t&\simeq \left\{\nabla^0+\sum_{i=1}^lf_i+\mathbf v(t):\mathbf v(t)\in V_{\mathrm{can}}\jet t\right\}\label{isowithvcan}\\\notag
    &\xrightarrow{\sim} \frac{\left\{\nabla^0+\sum_{i=1}^lf_i+\mathbf v(t):\mathbf v(t)\in\b\jet t\right\}}{N\jet t}\\\notag
    &\xrightarrow{\sim} \frac{\left\{\nabla^0+\sum_{i=1}^l\psi_if_i+\mathbf v(t):\mathbf v(t)\in\b\jet t,\psi_i(0)\ne 0\right\}}{B\jet t}\\\notag
    &\xrightarrow{\sim} \mathrm{Op}_G(X),
\end{align}
where the maps are induced by inclusions of sets.
\end{theorem}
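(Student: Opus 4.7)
The first identification is the tautological one sending $\mathbf v \mapsto \nabla^0 + p_{-1} + \mathbf v$. The final map to $\op_G(X)$ comes directly from the local description (\ref{eq:localexprforopers}) combined with the previous theorem that every $\mathcal F_B$ underlying an oper is isomorphic to the canonical bundle $\mathcal F_B^0$: once this bundle is trivializable, opers up to isomorphism are exactly connections of the displayed local form modulo change of trivialization, i.e.\ modulo $B\jet t$-gauge action. For the penultimate map, I would factor $B = H \ltimes N$: the residual $H\jet t$-action on the $N\jet t$-quotient rescales each coefficient $\psi_i$ by $\alpha_i(h)$, and since $\{\alpha_i\}$ is a $\Z$-basis of $X^*(H)$, the assignment $h \mapsto (\alpha_i(h))_i$ identifies $H\jet t$ with $(\C\jet t^\times)^l$. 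A unique $h$ then normalizes $(\psi_i)$ to $(1,\ldots,1)$, yielding a bijection between the two quotients.

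The central and most substantial isomorphism $V_{\mathrm{can}}\jet t \xrightarrow{\sim} \{\cdots\}/N\jet t$ is the Drinfeld--Sokolov gauge reduction. The key algebraic input is the Kostant-type splitting
$$
\mathfrak b \;=\; V_{\mathrm{can}} \;\oplus\; [p_{-1}, \mathfrak n],
$$
which I would deduce from the principal $\mathfrak{sl}_2$-triple $\{p_{-1},2\rho^\vee,p_1\}$: decomposing $\mathfrak g$ into irreducibles under this $\mathfrak{sl}_2$, the map $\mathrm{ad}(p_{-1})\colon \mathfrak n \to \mathfrak b$ is injective (no $\mathfrak{sl}_2$-lowest weights lie in $\mathfrak n$) with cokernel precisely the highest-weight vectors in $\mathfrak n$, which by definition is $V_{\mathrm{can}}$. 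A dimension count using $\dim V_{\mathrm{can}} = l$ and $\sum d_i = |\Phi^+|$ confirms consistency. Applying the splitting coefficient-wise gives $\mathfrak b\jet t = V_{\mathrm{can}}\jet t \oplus [p_{-1}, \mathfrak n\jet t]$.

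Given $\mathbf v \in \mathfrak b\jet t$, I would construct a unique $g \in N\jet t$ with $g \cdot (\nabla^0 + p_{-1} + \mathbf v) \in \nabla^0 + p_{-1} + V_{\mathrm{can}}\jet t$ by induction on the principal grading $\mathfrak g = \bigoplus_d \mathfrak g_d$, starting from the lowest non-canonical degree. To leading order, $\exp(X)$ with $X \in \mathfrak n_{d+1}\jet t$ shifts the grading-$d$ component of $\mathbf v$ by $-[p_{-1},X]$ while only perturbing strictly higher gradings; by the splitting, a unique such $X$ kills the $[p_{-1},\mathfrak n]$-component in degree $d$. Since the principal grading on $\mathfrak n$ is bounded above (by $h-1$), only finitely many steps are needed at each order in $t$, yielding a well-defined $g$; uniqueness of the normalized representative follows from $V_{\mathrm{can}} \cap [p_{-1},\mathfrak n] = 0$. \textbf{The main obstacle} is precisely this bookkeeping: one must verify that at each induction step the derivative correction from $-(\exp X)^{-1} d(\exp X)$ only alters strictly higher-degree components of $\mathbf v$, so the already-normalized lower degrees are preserved, and that the resulting product of exponentials converges in the $t$-adic topology on $N\jet t$.
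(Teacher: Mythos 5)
Your proof is correct and follows the standard Drinfeld--Sokolov gauge reduction, which is exactly what the cited reference (FFT, Sections 4.1--4.2) does; the paper itself supplies no argument, only the citation. Two small points are worth tightening. First, the parenthetical dimension count contains a slip: $\sum d_i = |\Phi^+| + l = \dim\b$, not $|\Phi^+|$; what you actually need is $\dim[p_{-1},\n] = \dim\n = |\Phi^+|$ (from injectivity of $\mathrm{ad}\,p_{-1}$ on $\n$, as you correctly argue), together with $\dim V_{\mathrm{can}} = l$ and $V_{\mathrm{can}}\cap[p_{-1},\n]=0$ (which holds since $V_{\mathrm{can}}=\g^{p_1}$ and $\g^{p_1}\cap[p_{-1},\g]=0$ by the Kostant decomposition).

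Second, the ``main obstacle'' you flag is routine and should be resolved rather than left open. For $X\in\n_{d+1}\jet t$ one has
$$
(\exp X)^{-1}\,\partial_t(\exp X)\;=\;\sum_{n\ge 0}\frac{(-\mathrm{ad}\,X)^n}{(n+1)!}\,\partial_t X\;\in\;\n_{\ge d+1}\jet t,
$$
so the derivative correction indeed touches only degrees strictly greater than $d$, and likewise $\mathrm{Ad}_{\exp X}(\mathbf v)-\mathbf v\in\n_{\ge d+1}\jet t$ while $\mathrm{Ad}_{\exp X}(p_{-1})-p_{-1}=-[p_{-1},X]+(\text{degree}\ge d+1)$. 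Hence degree-$d$ normalization leaves degrees $\le d$ untouched. Also, there is no $t$-adic convergence issue and the phrase ``finitely many steps at each order in $t$'' is a bit misleading: the induction runs over the principal degree $d=0,1,\ldots,h-2$, terminating after at most $h-1$ steps total because $\n$ has bounded depth; the resulting $g$ is a \emph{finite} product $\exp(X_{h-1})\cdots\exp(X_1)$ with each $X_j\in\n_j\jet t$, so no limit is taken. Everything else, including the reduction of the $B\jet t$-quotient to the $N\jet t$-quotient via the $H$-torsor structure on $\mathbf O$ (using that $\{\alpha_i\}$ is a $\Z$-basis of $X^*(H)$ precisely because $G$ is adjoint), is correct.
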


\subsection{Notation}
We will denote the space of $G$-opers on a space $X$ by $\mathrm{Op}_G(X)$. We will sometimes speak of $\g$-opers $\op_\g(X)$ as $G$-opers for the adjoint form $G$ of $\g$. For $z_1,\ldots,z_n\in\mathbb P^1$, $\chi\in \g^*$ and nonnegative integers $m_i$ define the following spaces:
\begin{itemize}
    \item $\mathrm{Op}_G(\mathbb P^1)_{(z_i),\infty}^{(m_i),m_\infty}$ is the space of opers on $\mathbb P^1$ with singularities $m_i$ at the points $z_i$.
    \item $\mathrm{Op}_G(\mathbb P^1)_{(z_i),\infty}$ is the space of opers on $\mathbb P^1$ with singularities of finite orders at the points $z_i$ and $\infty$.
    \item $\mathrm{Op}_{G}(\mathbb P^1)_{0,\infty}^{1,2,\lambda}$ is the subspace of $\mathrm{Op}_{G}(\mathbb P^1)_{0,\infty}^{1,2}$ consisting of opers with $1$-residue $\pi(-\lambda-\rho)$ at $0$, where $\rho$ is the Weyl vector.
    \item $\mathrm{Op}_{G}^{\mathrm{MF}}(\mathbb P^1)_{0,\infty}^{1,2,\lambda}$ is the subspace of $\mathrm{Op}_{G}(\mathbb P^1)_{0,\infty}^{1,2,\lambda}$ consisting of opers with trivial monodromy at $0$.
    \item $\mathrm{Op}_{G}(\mathbb P^1)_{0,\infty,\pi(-\chi)}^{1,2}$ is the subspace of $\mathrm{Op}_{G}(\mathbb P^1)_{0,\infty}^{1,2}$ consisting of opers with $2$-residue $\pi(-\chi)$ at $\infty$, where $\pi:\h\to\h\git W$ is the canonical projection.
    \item $\mathrm{Op}_{G}^{\mathrm{MF}}(\mathbb P^1)_{0,\infty,\pi(-\chi)}^{1,2}$ is the subspace of $\mathrm{Op}_{G}(\mathbb P^1)_{0,\infty,\pi(-\chi)}^{1,2}$ consisting of opers with trivial monodromy at $0$.
    \item $\mathrm{Op}_{G}(\mathbb P^1)_{0,\infty,\pi(-\chi)}^{1,2,\lambda}=\mathrm{Op}_{G}(\mathbb P^1)_{0,\infty}^{1,2,\lambda}\cap \mathrm{Op}_{G}(\mathbb P^1)_{0,\infty,\pi(-\chi)}^{1,2}$.
    \item $\mathrm{Op}_{G}^{\mathrm{MF}}(\mathbb P^1)_{0,\infty,\pi(-\chi)}^{1,2,\lambda}=\mathrm{Op}_{G}^{\mathrm{MF}}(\mathbb P^1)_{0,\infty}^{1,2,\lambda}\cap \mathrm{Op}_{G}(\mathbb P^1)_{0,\infty,\pi(-\chi)}^{1,2}$.
\end{itemize}

\subsection{Connection between opers and Gaudin algebras} In this subsection, we state the following theorem that will play the key role in proving our main result.
\begin{theorem}\label{t:isoswithopers}
    For generic $\chi$ we have the following commutative diagrams:
\begin{center}
    \begin{tikzcd}
Z_u(\widehat{\g_\kappa^\vee}) \arrow[d, two heads]                            & \mathrm{Fun}\,\mathrm{Op}_\g(D_u^\times)\arrow[l, "\sim"'] \arrow[d, two heads]                                     & 
\mathfrak z_u(\widehat{\g^\vee}) \arrow[d, two heads]                        & \mathrm{Fun}\,\mathrm{Op}_\g(D_u)\arrow[d, two heads] \arrow[l, "\sim"']\\
\mathfrak z_u(\widehat{\g^\vee}) \arrow[d, two heads]                        & \mathrm{Fun}\,\mathrm{Op}_\g(D_u)\arrow[d, two heads] \arrow[l, "\sim"']                                           &{\mathcal B(\g^\vee)} \arrow[d, two heads] & {\mathrm{Fun}\,\mathrm{Op}_\g(\mathbb P^1)_{0,\infty}^{1,2}} \arrow[l, "\sim"'] \arrow[d, two heads]                \\
{\mathcal Z_{(z_i),\infty}(\g^\vee)}\arrow[d, two heads] & \mathrm{Fun}\,\mathrm{Op}_\g(\mathbb P^1)_{(z_i),\infty} \arrow[l, "\sim"']\arrow[d, two heads]                                           &{\mathcal B_\chi(\g^\vee)} \arrow[d, two heads]               & {\mathrm{Fun}\,\mathrm{Op}_\g(\mathbb P^1)_{0,\infty,\pi(-\chi)}^{1,2}} \arrow[l, "\sim"']\arrow[d, two heads]\\
{\mathcal Z_{(z_i),\infty}^{(m_i),m_\infty}(\g^\vee)} & {\mathrm{Fun}\,\mathrm{Op}_\g(\mathbb P^1)_{(z_i),\infty}^{(m_i),m_\infty}} \arrow[l, "\sim"']&{\mathcal B_\chi^\lambda(\g^\vee)}                & {\mathrm{Fun}\,\mathrm{Op}_\g^{\mathrm{MF}}(\mathbb P^1)_{0,\infty,\pi(-\chi)}^{1,2,\lambda}} \arrow[l, "\sim"']\makebox[0pt][l]{\,,}
\end{tikzcd}
\end{center}       
where the map $\mathfrak z_u(\widehat{\g^\vee})\to \mathcal Z_{(z_i),\infty}(\g^\vee)$ is the map $\Psi_{u,(z_i),\infty}$ and all other vertical maps are the natural surjections.
\end{theorem}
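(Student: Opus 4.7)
The plan is to build the diagram top-down, invoking the Feigin--Frenkel correspondence layer by layer and then verifying naturality of each vertical square. Since all four rows on the left and the four rows on the right are assembled from the same underlying isomorphism plus a sequence of quotients / specializations, the main work is identifying what each specialization looks like on the oper side.

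First I would cite the top horizontal isomorphism $Z_u(\widehat{\g_\kappa^\vee})\simeq \mathrm{Fun}\,\op_\g(D_u^\times)$ as the classical Feigin--Frenkel isomorphism for the completed center at the critical level (see \cite[Section 4]{F1} or \cite[Theorem 4.1]{FFT}). The second horizontal isomorphism $\mathfrak z_u(\widehat{\g^\vee})\simeq \mathrm{Fun}\,\op_\g(D_u)$ follows by restricting to the PBW subspace $\mathbb V_0\simeq U(\widehat\g_-)$; geometrically, this corresponds to the fact that $\op_\g(D_u)$ sits as a closed subscheme of $\op_\g(D_u^\times)$, matching the canonical parametrization \eqref{isowithvcan}. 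The third row is then obtained by applying the anti-homomorphism $\Psi_{u;(z_i),\infty}$: on the oper side this is the geometric restriction that identifies an oper on $\mathbb P^1$ with singularities at $z_1,\ldots,z_N,\infty$ with the collection of its formal restrictions at these points, so the image of $\Psi$ equals $\mathrm{Fun}\,\op_\g(\mathbb P^1)_{(z_i),\infty}$ by the local-to-global principle for opers. Row four follows by truncating singularity orders, which matches the quotient in $\mc Z^{(m_i),m_\infty}_{(z_i),\infty}(\g^\vee)$ term-by-term via the local expression \eqref{eq:localoperswithsing}.

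For the right column I would specialize the third row to $N=1$, $z_1=0$, $(m_1,m_\infty)=(1,2)$, which immediately gives $\mc B(\g^\vee)\simeq \mathrm{Fun}\,\op_\g(\mathbb P^1)_{0,\infty}^{1,2}$. The specialization at $\chi\in\g^*$ in the $S(\g^\vee)$-factor kills the freedom of the $2$-residue at $\infty$ (the leading $\b$-valued term of $\mathbf v(t)dt$ near $\infty$ records exactly this insertion), producing the second-from-bottom isomorphism on the right; here genericity of $\chi$ is used so that the fiber of the $2$-residue map is reduced and the specialization commutes with the Feigin--Frenkel isomorphism. For the last row, I would invoke the Frenkel--Gaitsgory theorem: the quotient by $\ker \pi_\lambda$ on the Lie-algebra side corresponds precisely to fixing the $1$-residue at $0$ to $\pi(-\lambda-\rho)$ and imposing trivial monodromy, matching $\op^{\mathrm{MF}}_\g(\mathbb P^1)^{1,2,\lambda}_{0,\infty,\pi(-\chi)}$ (see \cite[Section 5]{FFT}, with the key input from \cite{FG}).

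The main obstacle will be the bottom-right square. The upper squares are essentially formal once the Feigin--Frenkel isomorphism is in place, but identifying $V(\lambda)$ with monodromy-free opers of prescribed residue genuinely uses deep representation theory of $\widehat{\g^\vee}$ at the critical level: the Kac--Kazhdan result on Verma modules with trivial central character and the Frenkel--Gaitsgory description of irreducible integrable quotients in terms of opers without monodromy. This is the step one should expect to require the most care, and it is also the reason the genericity hypothesis on $\chi$ is needed, since the monodromy-free locus is cut out in a way that only behaves well along a generic fiber of the residue-at-infinity map.
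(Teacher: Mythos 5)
Your proposal takes the same route as the paper: assemble the diagram layer by layer from the Feigin--Frenkel correspondence, descending through quotients and specializations. The paper's own proof is simply a pair of citations — \cite[Proposition 5.3, Theorems 5.4, 5.7, 5.8]{FFT} for the left column and the first three rows on the right, plus \cite[Theorem A]{FFR} for the bottom-right square — so what you have written is a (correct) unpacking of what those references actually prove rather than a different argument. Two small remarks: the key input for the bottom-right square in the paper is FFR's Theorem A (diagonalizability and simple spectrum of $\mc B_\chi^\lambda$ together with the bijection of eigenspaces with monodromy-free opers) rather than Frenkel--Gaitsgory directly, though the two are of course closely related; and your explanation of why genericity of $\chi$ enters the $\mc B_\chi$ row (``so that the fiber of the $2$-residue map is reduced'') is somewhat loose — the operative hypothesis in \cite[Theorem 5.8]{FFT} is regularity of $\chi$, and genericity is really needed for the simple-spectrum statement in the last row — but this does not affect the structure of the argument.
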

\begin{proof}
    The existence of the diagram on the left follows from \cite[Proposition 5.3, Theorem 5.4, Theorem 5.7]{FFT}. Similarly, the diagram on the right is given by \cite[Theorem 5.8]{FFT} and \cite[Theorem A]{FFR}.
\end{proof}

\section{Actions of Dynkin automorphisms on spaces of opers}\label{sec:dynkinonopers}
Let $X$ be a smooth curve. The automorphism $\sigma$ defines an automorphism of $\mathrm{Op}_G(X)$, which can be described in one of the following equivalent ways:
    
    \begin{enumerate}
        \item Realizing the elements of $\mathrm{Op}_G(X)$ as triples $(\mc F_G, \mc F_B, \nabla)$, we define
$$
\sigma(\mc F_G, \mc F_B, \nabla)=(\mc F_G\times^{G}G(\sigma), \mc F_B\times^{B}B(\sigma), \nabla\times^{G}G(\sigma)),
$$      
where $G(\sigma)$ stands for the $G$-torsor $G$ with the left action
    $$
    g\cdot h:=\sigma(g)h
    $$
    and the standard right action by multiplication.
    \item Fixing isomorphisms
    $$
    (\mc F_G,\mc F_B)\simeq (\mc F_G\times^{G}G(\sigma),\mc F_B\times^{B}B(\sigma)),
    $$ 
    we can identify these triples and define $\sigma$ as acting on the connection. The action is induced from the action of $\sigma$ on $\g$ in $\Gamma(\Omega_{X}\otimes \g_{\mc F_G})$ after we have chosen a $\sigma$-invariant connection $\nabla^0$, where $\g_{\mc F_G}:=\mc F_G\times^G\g$.
    \item In terms of a local trivialization of $X$ with a local coordinate $t$,
$$
\sigma\left(\nabla^0+\sum_{i=1}^l\psi_if_i+\mathbf v(t)\right)=\nabla^0+\sum_{i=1}^l\psi_i\sigma^{-1}(f_i)+\sigma^{-1}(\mathbf v(t)).
$$
    \end{enumerate}

Now, we would like to construct a canonical isomorphism between $\op_\g(X)^\sigma$ and $\op_{G_\sigma}(X)$. We start with the following motivation in the case when $\mc F_B$ is trivializable over $X$. We use the notation from Section \ref{s:canrepr}. Choose a set of generators $\{f_i\}$, $i=1,\ldots,l$ of the negative simple root spaces of $\g$ on which $\sigma$ acts by permutation.

We see from Theorem \ref{t:dynkinfixeddatum} that if $\omega_1^\vee,\ldots,\omega_l^\vee$ is the set of fundamental coweights of $\g$, i.e. elements forming the dual basis to $\alpha_i$, then 
$$
\omega_\eta^\vee:=\sum_{i\in\eta}\omega_i^\vee,\qquad \text{$\eta$ is a $\sigma$-orbit in $\{1,...,l\}$},
$$
form the complete set of fundamental coweights of $\g_\sigma$, which implies that $\rho^\vee$ is fixed by $\sigma$ and equals the sum of fundamental coweights of $\g_\sigma$, i.e. coincides with the Weyl covector for $\g_\sigma$. Similarly, 
$$
f_\eta:=\sum_{i\in\eta}f_i
$$
are the generators of negative simple root spaces of $\g_\sigma$, hence 
$$
p_{-1}=\sum_{\eta}f_\eta,
$$
so the same statement is true for $p_{-1}$. Thus, the element $p_1$ defined for $\g$ is fixed by $\sigma$ and coincides with the analogous element $p_1$ defined for $\g_\sigma$.
Therefore, $\sigma$ acts on $V_{\mathrm{can}}(\g)$ and
$$
V_{\mathrm{can}}(\g)^\sigma=(\n^{p_1})^\sigma=(\n^\sigma)^{p_1}=V_{\mathrm{can}}(\g_\sigma).
$$

Moreover, it is easy to see that the action of $\sigma$ on $V_{\mathrm{can}}$ commutes with the isomorphisms (\ref{isowithvcan}). When $G$ is of adjoint type and $\mc F_B$ is trivializable for all opers $(\mc F_G,\mc F_B,\nabla)$ on $X$, this gives rise to the commutative diagram

\begin{center}
    \begin{tikzcd}
V_{\mathrm{can}}(\g)^\sigma\jet t \arrow[d, "\simeq"] & V_{\mathrm{can}}(\g_\sigma)\jet t \arrow[l, "\sim"'] \arrow[d, "\simeq"] \\
\mathrm{Op}_\g(X)^\sigma                               & \mathrm{Op}_{\g_\sigma}(X) \arrow[l, "\sim"']   \makebox[0pt][l]{\,,}                         
\end{tikzcd}
\end{center}
where the map below can be alternatively described as extending the triple $(\mc F_{G_\sigma},\mc F_{B^\sigma},\nabla)$ by $G$ and $B$.

\begin{theorem}\label{t:sigma-invariantopers}
    Let $X$ be a smooth curve or one of the discs $D$ or $D^\times$. Then the map
    $$
    \mathrm{Op}_{\g_\sigma}(X)\to \mathrm{Op}_\g(X)
    $$
    is an isomorphism onto $\mathrm{Op}_\g(X)^\sigma$.

    Moreover, a similar map
    $$
    \mathrm{Op}_{\g_\sigma}(X)_{\mathrm{sing}}\to \mathrm{Op}_\g(X)_{\mathrm{sing}}^\sigma
    $$
    of opers with singularities at finitely many points is an isomorphism. It preserves the orders of singularities and residues, as well as the monodromy-free property, hence gives an isomorphism for the subspaces of opers with prescribed poles and residues at prescribed points.
\end{theorem}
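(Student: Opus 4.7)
The plan is to construct the map $\mathrm{Op}_{\g_\sigma}(X)\to \mathrm{Op}_{\g}(X)$ by extension of structure groups and then identify its image via the canonical representative description of opers. Given an oper $(\mc F_{G_\sigma},\mc F_{B_\sigma},\nabla)$, define
$$
\mc F_G:=\mc F_{G_\sigma}\times^{G_\sigma}G,\qquad \mc F_B:=\mc F_{B_\sigma}\times^{B_\sigma}B,
$$
and extend $\nabla$ accordingly. To see that the resulting triple is a $\g$-oper, choose generators $f_i$ of the negative simple root spaces of $\g$ permuted by $\sigma$, so that $f_\eta:=\sum_{i\in\eta}f_i$ are the negative simple root generators of $\g_\sigma$. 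The section $\nabla/\mc F_B$ then pointwise becomes a positive linear combination of $f_\eta$'s which, upon rewriting, equals a positive linear combination of all $f_i$'s, hence lies in the open $B$-orbit $\mathbf{O}\subset (\g/\b)$. Since $\sigma$ acts trivially on $\mc F_{G_\sigma}$ and $\mc F_{B_\sigma}$, the image of the map is automatically contained in $\mathrm{Op}_\g(X)^\sigma$.

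For bijectivity onto $\sigma$-invariants, I would first treat the case $X=D$. By Section~\ref{s:canrepr}, the canonical representative isomorphism gives
$$
\mathrm{Op}_\g(D)\xrightarrow{\sim}V_{\mathrm{can}}(\g)\jet{t},\qquad \mathrm{Op}_{\g_\sigma}(D)\xrightarrow{\sim}V_{\mathrm{can}}(\g_\sigma)\jet{t},
$$
and these isomorphisms are $\sigma$-equivariant by the local description of the $\sigma$-action in Section~\ref{sec:dynkinonopers}. Because canonical representatives are unique, any $\sigma$-invariant oper must itself be represented by a $\sigma$-invariant element of $V_{\mathrm{can}}(\g)\jet{t}$. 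The preliminary identification $V_{\mathrm{can}}(\g)^\sigma=V_{\mathrm{can}}(\g_\sigma)$ established in the excerpt then yields the desired isomorphism $\mathrm{Op}_{\g_\sigma}(D)\xrightarrow{\sim}\mathrm{Op}_\g(D)^\sigma$, and the same argument with $\C\jet{t}$ replaced by $\C\rjet{t}$ handles $X=D^\times$.

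For a general smooth curve $X$, the underlying $B$-bundle of any $\g$-oper is the canonical bundle $\mc F_B^0$, and $\mc F_B^0$ is constructed from the principal $\mathfrak{sl}_2$-triple $\{p_{-1},2\rho^\vee,p_1\}$, all of whose components were shown to be $\sigma$-invariant and to coincide with the analogous data for $\g_\sigma$. Hence the $B_\sigma$-bundle $\mc F_{B_\sigma}^0$ for $\g_\sigma$ extends precisely to $\mc F_B^0$ for $\g$, and the space of opers is globally described by sections of a bundle associated to $V_{\mathrm{can}}$ (twisted by $\Omega_X$); taking $\sigma$-invariants reduces to the subbundle associated to $V_{\mathrm{can}}(\g_\sigma)$. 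Alternatively, the statement can be checked Zariski-locally on $X$, where we may trivialize $\mc F_B^0$ and reduce to the disc case. The second half of the theorem, concerning opers with singularities, follows by the same strategy: the local form (\ref{eq:localoperswithsing}) admits an analogous $\sigma$-equivariant canonical representative, the $\sigma$-action commutes with taking the $m$-residue (since it fixes the image of $p_{-1}$ in $\h\git W$), and it preserves the monodromy representation up to the $\sigma$-action on $G$, which restricts to the trivial action on the fundamental group of $D^\times$.

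The main obstacle I anticipate is the verification that $\sigma$-invariance passes cleanly through the $B\jet{t}$-gauge reduction to canonical form: one must know that the $\sigma$-fixed points of the $B\jet{t}$-gauge action coincide with the $B_\sigma\jet{t}$-gauge action, so that a $\sigma$-invariant oper admits a $\sigma$-invariant canonical representative. This amounts to showing that $B_\sigma\jet{t}=(B\jet{t})^\sigma$ acts freely and transitively on the appropriate orbits, which is where the uniqueness half of the canonical representative theorem, together with the identification $V_{\mathrm{can}}(\g)^\sigma=V_{\mathrm{can}}(\g_\sigma)$, does the essential work.
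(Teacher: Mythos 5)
Your proof is correct in its essential ideas but follows a genuinely different route from the paper's. The paper's proof is purely bundle-theoretic and works directly with the definition of opers as triples: it reduces to connections via the uniqueness of $\mc F_B^0$, gets injectivity from the absence of automorphisms of adjoint-type opers, and gets surjectivity by observing that $\nabla-\nabla'\in\Gamma(\Omega_X\otimes\g_{\mc F_G})^\sigma = \Gamma(\Omega_X\otimes(\g_\sigma)_{\mc F_{G_\sigma}})$ for a $\sigma$-invariant auxiliary connection $\nabla'$ extended from $G_\sigma$, combined with the identity $\mathbf O(G)^\sigma=\mathbf O(G_\sigma)$. This argument is uniform over all the allowed $X$ and never invokes canonical representatives. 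Your proof instead elevates the paper's ``motivation'' paragraph into the actual argument: you use the $\sigma$-equivariant bijection $V_{\mathrm{can}}(\g)\jet t\simeq\mathrm{Op}_\g(D)$ together with $V_{\mathrm{can}}(\g)^\sigma=V_{\mathrm{can}}(\g_\sigma)$, then patch together the general curve case either via a global twisted $V_{\mathrm{can}}$-description or Zariski-locally. What the paper's route buys is a single argument that does not require choosing trivializations and sidesteps the concern you raise at the end about the compatibility of the $B\jet t$-gauge reduction with $\sigma$; what your route buys is transparency about exactly where the structure theory of the principal $\mathfrak{sl}_2$-triple enters, at the cost of handling the non-disc case separately.

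Two minor imprecisions in your write-up worth flagging. First, the concern you anticipate in the last paragraph is actually already resolved by the equivariance of the composite bijection $V_{\mathrm{can}}(\g)\jet t\xrightarrow{\sim}\mathrm{Op}_\g(D)$; you do not need to separately analyze fixed points of the $B\jet t$-gauge action, since an equivariant bijection automatically restricts to a bijection on fixed points. Second, the sentence about monodromy is somewhat garbled: the relevant point is simply that $G_\sigma\hookrightarrow G$ is injective, so the monodromy of the $\g_\sigma$-oper is trivial iff the monodromy of its extension to a $\g$-oper is trivial, matching the paper's brief remark via the local description from \cite[4.4]{FFT}.
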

\begin{proof}
Let $G$ be the adjoint form of $\g$. It follows from \cite[3.1.10]{BD1} that the bundles $\mc F_B$ are the same for all $G$-opers on $X$, which implies that it is sufficient to check the isomorphism at the level of connections. Injectivity follows from the fact that $G$-opers for $G$ of adjoint type have no nontrivial automorphisms (\cite[3.1.4]{BD1}). For surjectivity, pick a $\sigma$-invariant $G$-oper $(\mc F_G,\mc F_B,\nabla)$ on $X$. Then we can think of the $\sigma$-action on it as the action on the connection. So, we have that $\sigma(\nabla)=\nabla$. Choose any connection $\nabla'$ on $\mc F_{G}$ extended from a connection on $\mc F_{G_\sigma}$. Then it is $\sigma$-invariant, hence $\nabla-\nabla'\in \Gamma(\Omega_X\otimes\g_{\mc F_G})$ is $\sigma$-invariant as well. Then
  $$
\nabla-\nabla'\in\Gamma(\Omega_X\otimes\g_{\mc F_G})^\sigma=\Gamma(\Omega_X\otimes\g_{\mc F_{ B_\sigma}})^\sigma=\Gamma(\Omega_X\otimes(\g_\sigma)_{\mc F_{ B_\sigma}})=\Gamma(\Omega_X\otimes(\g_\sigma)_{\mc F_{G_\sigma}}),
    $$
which implies that $\nabla$ is extended from $G_\sigma$. 

Now, we need to check the extra condition on $\nabla/\mc F_B$. Let $\mathbf O=\mathbf O(G):=B\cdot p_{-1}\subset \g/\b$ be the open $B$-orbit in $\g/\b$. Then the $B$-action on it factors through $B/N=H$, which acts simply transitively on it. Then 
$$
\mathbf O(G)^\sigma=\left\{\sum_ia_if_i:a_i=a_{\sigma^{-1}(i)}\right\}=\left\{\sum_\eta a_\eta f_\eta\right\}=\mathbf O(G_\sigma),
$$
where $\eta$ runs over the set of $\sigma$-orbits in the Dynkin diagram of $\g$. Thus,
    $$
\nabla/\mc F_B\in\Gamma(\Omega_X\otimes\mathbf O_{\mc F_G})^\sigma=\Gamma(\Omega_X\otimes\mathbf O_{\mc F_{ B^\sigma}})^\sigma=\Gamma(\Omega_X\otimes(\mathbf O^\sigma)_{\mc F_{ B^\sigma}})=\Gamma(\Omega_X\otimes\mathbf O(G_\sigma)_{\mc F_{ B^\sigma}}),
    $$
which implies that the triple is extended from an oper on $G_\sigma$, as desired.

The proof of the isomorphism from the second part is similar. The last statement is clear from the local description of opers with singularities (\ref{eq:localoperswithsing}) and monodromy-free opers from \cite[4.4]{FFT}.
\end{proof}

We end this section with the following important result, which follows from our discussion concerning the triple $\{p_{-1}, 2\rho^\vee, p_1\}$:
\begin{proposition}\label{p:quotspacesisosfordynkin}
    The restriction map
    \begin{equation}
\spec S(\g^*)^\g_\sigma\to\spec S(\g_\sigma^*)^{\g_\sigma}\label{eq:restrmapforsigmainv}
    \end{equation}
    is an isomorphism. In particular, 
    $$
    (\g\git G)^\sigma=(\h\git W)^\sigma=\h_\sigma\git W^\sigma=\g_\sigma\git G_\sigma.
    $$
\end{proposition}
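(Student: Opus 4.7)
The plan is to deduce the proposition directly from Kostant's theorem on the principal slice, combined with the identification $V_{\mathrm{can}}(\g)^\sigma = V_{\mathrm{can}}(\g_\sigma)$ established in the discussion immediately preceding the proposition. Kostant's theorem asserts that the inclusion
\[
p_{-1} + V_{\mathrm{can}}(\g) \hookrightarrow \g
\]
descends to an isomorphism onto the adjoint quotient $\g \git G$. Since $\sigma$ fixes $p_{-1}$ and preserves $V_{\mathrm{can}}(\g)$, this slice isomorphism is $\sigma$-equivariant, and restricting to $\sigma$-fixed subschemes yields
\[
(\g\git G)^\sigma \simeq p_{-1} + V_{\mathrm{can}}(\g)^\sigma = p_{-1} + V_{\mathrm{can}}(\g_\sigma) \simeq \g_\sigma\git G_\sigma,
\]
where the last isomorphism is Kostant's theorem applied to $\g_\sigma$ (legitimate because $\g_\sigma$ is again semisimple and its principal $\mathfrak{sl}_2$-triple coincides with the one for $\g$). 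A short verification then confirms that this composite agrees with the map induced by restriction of functions $S(\g^*)^\g \to S(\g_\sigma^*)^{\g_\sigma}$, establishing the outer equality in the displayed chain.

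For the intermediate equalities, I would invoke Chevalley's theorem (Theorem \ref{t:Chevalley}) to obtain $(\g\git G)^\sigma = (\h\git W)^\sigma$ and $\g_\sigma\git G_\sigma = \h_\sigma\git W_\sigma$. The remaining step is to identify $W_\sigma$ with $W^\sigma$. This is a classical result of Steinberg and can be read off from Theorem \ref{t:dynkinfixeddatum}: the simple reflections of $W_\sigma$ associated to a $\sigma$-orbit $\eta$ on the Dynkin diagram are the longest elements of the commuting parabolic subgroups $W_\eta = \langle s_{\alpha_i} : i \in \eta\rangle \subset W$, and these $\sigma$-invariant elements are known to generate $W^\sigma$.

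The main obstacle is ensuring the $\sigma$-equivariance of Kostant's slice, which rests on the $\sigma$-invariance of the principal $\mathfrak{sl}_2$-triple $\{p_{-1}, 2\rho^\vee, p_1\}$ together with the equality $V_{\mathrm{can}}(\g)^\sigma = V_{\mathrm{can}}(\g_\sigma)$. Both of these ingredients have already been worked out in the paragraphs immediately preceding the proposition, so once Kostant's theorem is invoked, the remainder of the argument is essentially formal bookkeeping.
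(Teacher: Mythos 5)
Your argument is correct and follows the same route as the paper: both proofs reduce the statement to the $\sigma$-equivariance of Kostant's slice isomorphism $\g\git G \simeq p_{-1}+\g^{p_1}$ together with the observation $(p_{-1}+\g^{p_1})^\sigma = p_{-1}+\g_\sigma^{p_1}$. You fill in a few details the paper leaves implicit (the identification of the fixed Weyl group and the verification that the slice map agrees with restriction of functions), but these are the same formal steps the paper elides with ``all the statements follow.''
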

\begin{proof}
    Due to \cite[Theorem 0.10]{Ko}, the Kostant section $p_{-1}+\g^{p_1}$ intersects each regular $G$-orbit in $\g$ in exactly one point and the restriction map $S(\g^*)^\g\to \C[p_{-1}+\g^{p_1}]$ is an isomorphism. Then all the statements follow from the equality
    $$
    (p_{-1}+\g^{p_1})^\sigma=p_{-1}+\g_\sigma^{p_1}.
    $$
\end{proof}
\begin{remark}
{\rm
    Note that this proposition doesn't hold for a general $G$-automorphism $\sigma$. A discussion on the surjectivity of the map \ref{eq:restrmapforsigmainv} is presented in \cite{PY}, Remark 4.6 in particular.
}
\end{remark}
\section{Coinvariants of generalized Gaudin algebras}
Our goal is to establish a canonical isomorphism between $\mathfrak z(\widehat{\g^\vee})_\sigma$ and $\mathfrak z(\widehat{\g_\sigma^\vee})$, where $\g_\sigma$ is the subspace of $\sigma$-invariant elements of $\g$. For this, we will use the isomorphism of $\mathfrak z$ with the space of opers, but first we need to define an action of $\sigma$ on $\mathfrak z(\widehat{\g^\vee})$ compatible with this isomorphism and the action of $\sigma$ on the space of opers.

We can define a $\sigma$-action on $\mathfrak z(\widehat\g)$ in the following ways:
\begin{enumerate}
        \item Consider the inclusion $\mathfrak z(\widehat\g)\subset U(\widehat\g_-)$. The action of $\sigma$ on $\g$ induces the action on $\widehat\g_-$, hence on $U(\widehat\g_-)$. As $\mathfrak z(\widehat\g)=U(\widehat{\g}_-)^{\g\jet t}$, the action restricts to $\mathfrak z(\widehat\g)$.
        \item Consider the interpretation $\mathfrak z(\widehat\g)=\mathrm{End}_{\widehat{\g}}\mathbb V_{0}$. Then $\sigma$ acts naturally on the right-hand side, which gives its action on $\mathfrak z(\widehat\g)$.
        \item We have an isomorphism
        $$
        \mathrm{Spec}\,\mathfrak z(\widehat\g)\simeq \mathrm{Op}_{\g^\vee}(D),
        $$
        and $\sigma$ acts on the right-hand side by its action on $\g^\vee$.
    \end{enumerate}

Before proving that all these constructions coincide, we present some useful definitions. Let $G$ be a simple and simply connected algebraic group. According to \cite[Section 2.4]{Zhu}, there is a canonical choice of a line bundle $\mc O(1)$ on the affine Grassmannian $\mathrm{Gr}_G=G\rjet t/G\jet t$ such that the Picard group $\operatorname{Pic}(\mathrm{Gr}_G)$ is isomorphic to $\Z\mc O(1)$. The {\bfseries critical level line bundle} is defined as $\mc L:=\mc O(-h^\vee)$, where $h^\vee$ is the dual Coxeter number of $G$.
\begin{lemma}\label{l:linebundlesareinvariant}
    Any automorphism of $\mathrm{Gr}_G$ acts on $\operatorname{Pic}(\mathrm{Gr}_G)$ trivially, where the action is via pullback.
\end{lemma}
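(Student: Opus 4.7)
The plan is to use that $\operatorname{Pic}(\mathrm{Gr}_G) \cong \mathbb Z$, so that for any automorphism $\phi$ of $\mathrm{Gr}_G$, the pullback $\phi^*$ induces a group automorphism of $\mathbb Z$, i.e.\ multiplication by $\pm 1$. The whole content is therefore to rule out the $-1$ case, which amounts to showing that $\phi^* \mc O(1)$ cannot be isomorphic to $\mc O(-1)$.

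First I would reduce to checking this on a single distinguishing property of $\mc O(1)$ that is manifestly preserved by isomorphisms. The natural choice is ampleness: on the ind-projective ind-scheme $\mathrm{Gr}_G$, the line bundle $\mc O(1)$ is ample, in the sense that its restrictions to the Schubert varieties $\mathrm{Gr}_G^{\le w}$ are ample and its positive powers produce the standard projective embeddings of these Schubert varieties (this is part of \cite[Section 2.4]{Zhu}). Since $\phi$ is an isomorphism of ind-schemes, it restricts to an isomorphism between closed sub-ind-schemes, sending Schubert varieties to (finite unions of) Schubert varieties, and pulling back an ample line bundle along an isomorphism yields an ample line bundle.

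Thus $\phi^* \mc O(1)$ is ample, while $\mc O(-1)$ is not (its positive powers have no global sections, since for example $H^0(\mathrm{Gr}_G^{\le w}, \mc O(-n))=0$ for $n\ge 1$ on any positive-dimensional Schubert variety). Combining with $\phi^* \mc O(1) \in \{\mc O(1), \mc O(-1)\}$ forces $\phi^* \mc O(1) \cong \mc O(1)$, and by $\mathbb Z$-linearity $\phi^*$ acts trivially on all of $\operatorname{Pic}(\mathrm{Gr}_G)$.

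The main potential obstacle is a technical one: making precise the notion of ampleness (or a sufficient replacement, such as ``positive power has nonzero global sections'') in the ind-scheme setting, and checking that $\phi$ preserves it. Once we accept the standard fact that $\mc O(1)$ restricts to an ample generator of $\operatorname{Pic}$ on each Schubert variety and that $\phi$ acts by permuting a cofinal family of finite-dimensional closed subschemes, the argument reduces to the analogous statement on projective varieties, which is immediate.
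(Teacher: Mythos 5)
Your proof is correct and takes essentially the same approach as the paper: observe that $\operatorname{Pic}(\mathrm{Gr}_G)\cong\Z\mc O(1)$ so the pullback acts by $\pm 1$, and rule out $-1$ by a positivity argument distinguishing $\mc O(1)$ from $\mc O(-1)$. The paper does this slightly more directly by citing \cite[Theorem 2.5.5]{Zhu} for $\dim\Gamma(\mathrm{Gr}_G,\mc O(m))>1$ when $m\ge 1$ and noting $\mc O(-m)$ has no sections, whereas you route through ampleness; but since your justification that $\mc O(-1)$ is not ample is precisely the vanishing of global sections, the two arguments are essentially identical, and invoking ampleness on the ind-scheme is an unnecessary (if harmless) detour.
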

\begin{proof}
    Let $\tau$ be an automorphism of $\mathrm{Gr}_G$. Then it gives an automorphism of $\operatorname{Pic}(\mathrm{Gr}_G)\simeq \Z\mc O(1)$, which implies that $\tau^*\mc O(m)$ either equals $\mc O(m)$ or $\mc O(-m)$ for any $m\in\Z$. Note that $\tau$ gives an isomorphism between global sections:
    $$
    \Gamma(\mathrm{Gr}_G,\mc O(m))\simeq \Gamma(\mathrm{Gr}_G,\tau^*\mc O(m)).
    $$
    It follows from \cite[Theorem 2.5.5]{Zhu} that 
    $$
    \dim \Gamma(\mathrm{Gr}_G,\mc O(m))>1\qquad\mbox{ } m\ge 1, 
    $$
    which implies that the dual bundles $\mc O(-m)$ for $m\ge 1$ have no global sections. Thus, $\tau^*\mc O(m)\ne \mc O(-m)$ for $m\ne 0$, which finishes the proof.
\end{proof}
\begin{theorem}\label{t:actionsonffcenter}
    All the actions $(1)-(3)$ listed above coincide.
\end{theorem}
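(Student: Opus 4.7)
The plan is to verify that the three actions coincide by checking (1) $=$ (2) directly from the construction of the isomorphism in item (iv), and then deducing (1) $=$ (3) from the naturality of the Feigin-Frenkel isomorphism with respect to Lie algebra automorphisms.

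For (1) $=$ (2), I would first observe that $\sigma$ lifts canonically to an automorphism of $\widehat\g$ by the rule $\sigma(A[m]):=\sigma(A)[m]$ and $\sigma(\mathbf 1):=\mathbf 1$; this is well defined because Dynkin automorphisms preserve the Killing form and hence the critical cocycle. This lift preserves $\widehat\g_+$ and the cyclic vector $\mathbf 1\otimes 1$ generating $\mathbb V_0$, so it descends to an automorphism of $\mathbb V_0$ as a $\widehat\g$-module. Now the isomorphism of item (iv) sends $z\in \mathfrak z(\widehat\g)\subset U(\widehat\g_-)$ to the endomorphism $\phi_z\colon v\mapsto v\cdot z$, using the PBW identification $\mathbb V_0\simeq U(\widehat\g_-)$ and right multiplication. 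A one-line computation gives $\sigma(v\cdot z)=\sigma(v)\cdot\sigma(z)$, so $\sigma\circ\phi_z\circ\sigma^{-1}=\phi_{\sigma(z)}$, which is exactly action (1).

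For (1) $=$ (3), the strategy is naturality. The Feigin-Frenkel isomorphism $\mathfrak z(\widehat\g)\simeq \mathrm{Fun}\,\op_{\g^\vee}(D)$ is built entirely from $\widehat\g$ and its vacuum module at the critical level; it is therefore equivariant with respect to any Lie algebra automorphism of $\g$, with the corresponding action on the oper side being that of the Langlands-dual Dynkin automorphism of $\g^\vee$ (which, in the simply-laced types, the only types admitting nontrivial Dynkin automorphisms, is canonically identified with $\sigma$ itself). Lemma \ref{l:linebundlesareinvariant} feeds into this naturality by ensuring that the automorphism of $\mathrm{Gr}_G$ induced by $\sigma$ preserves the critical line bundle $\mc L$, so the geometric incarnation of $\mathbb V_0$ and of the center $\mathfrak z(\widehat\g)$ carry canonical $\sigma$-actions matching the algebraic ones above. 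To confirm the equivariance concretely, I would reduce to checking it on a convenient set of generators: fixing canonical form representatives of opers as in Section \ref{s:canrepr}, the action of $\sigma$ on $\op_{\g^\vee}(D)$ permutes the coefficient functions in $V_{\mathrm{can}}(\g^\vee)\jet{t}$ according to its action on the Dynkin diagram, while on the Feigin-Frenkel side the same permutation is induced on the corresponding Segal-Sugawara generators via action (1).

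The main obstacle is pinning down the equivariance of the Feigin-Frenkel isomorphism with precision, since the isomorphism is usually constructed through screening operators and no explicit formula is available in general. The cleanest resolution is to exhibit a $\sigma$-equivariant identification at the level of the defining data: using Lemma \ref{l:linebundlesareinvariant} to transport $\mc L$ and $\mathbb V_0$ along $\sigma$, one realizes the Feigin-Frenkel isomorphism as the comparison of two canonical functors in $\g$, both of which intertwine $\sigma$ with its dual Dynkin automorphism of $\g^\vee$. An alternative, more hands-on route is to pick Segal-Sugawara vectors symmetrised over $\sigma$-orbits and verify that their images under Feigin-Frenkel are the expected $\sigma$-invariant oper coordinates; this circumvents naturality at the cost of choosing generators explicitly.
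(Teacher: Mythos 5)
Your argument for $(1)=(2)$ is fine and matches the paper's (which simply declares this equivalence clear). For $(1)=(3)$, you have correctly identified the right tools — Lemma \ref{l:linebundlesareinvariant} and a Tannakian reformulation of the Feigin--Frenkel isomorphism — so your high-level strategy is the same as the paper's. But as written there is a real gap: you twice assert the conclusion you need rather than derive it. The statement that the Feigin--Frenkel isomorphism ``is therefore equivariant with respect to any Lie algebra automorphism of $\g$'' is not something that follows from the isomorphism being ``built from $\widehat\g$ and its vacuum module''; the oper side depends on a choice of Borel, Cartan, and generators of simple root spaces, so only pinning-preserving automorphisms (i.e.\ Dynkin ones) have any hope of acting compatibly, and even for those the equivariance must be checked. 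Your suggested ``cleanest resolution'' — realizing the isomorphism as a comparison of two canonical functors — is exactly the paper's approach, but you never specify the functors nor verify equivariance step by step. The paper does this by writing the $G^\vee$-bundle on $\mathrm{Spec}\,\mathfrak z(\widehat\g)$ as the composition
$$
\mathrm{Rep}(G^\vee)\xrightarrow{\sim} \mathrm{Perv}_{G\jet t}(\mathrm{Gr}_G)\xrightarrow{(-)*\mc L^{-1}} D_{\mathrm{Gr}_G}\mathrm{-mod}\xrightarrow{\Gamma} \widehat\g\mathrm{-mod}\xrightarrow{(-)^{G\jet t}}\mathfrak z(\widehat\g)\mathrm{-mod},
$$
checking each arrow commutes with $\sigma$ (using geometric Satake's $\sigma$-equivariance from \cite{Ho}, your Lemma \ref{l:linebundlesareinvariant} for the convolution step, and direct inspection for the rest). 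You also omit entirely the $B^\vee$-reduction: giving a $G^\vee$-bundle is not enough, one must also give the $B^\vee$-reduction and check that its defining data (the line bundles $l^\lambda$ realized as $-t\partial_t$-eigenspaces) is permuted appropriately by $\sigma$. Your ``alternative, more hands-on route'' via explicit Segal--Sugawara vectors would be circular as stated — you would already need to know which generator corresponds to which canonical-form coordinate, which presupposes some compatibility of the isomorphism with $\sigma$.
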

\begin{proof}
 The equivalence $(1)\Leftrightarrow (2)$ is clear. We will show the equivalence $(1)\Leftrightarrow (3)$. Let $G^\vee$ be the adjoint form of $\g^\vee$. The map 
$$
    \mathrm{Spec}\,\mathfrak z(\widehat\g)\to \mathrm{Op}_{\g^\vee}(D)
$$
is the same as an element of $\mathrm{Op}_{G^\vee}(D)(\mathfrak z(\widehat\g))$, which is given by a $G^\vee$-bundle, its reduction to $B^\vee$ and a connection on $G^\vee$ satisfying certain properties. According to \cite[3.3.8]{BD1} and \cite[Proposition 5.1]{Ra}, it is sufficient to give here a $G^\vee$-bundle and its reduction to $B^\vee$ satisfying certain properties. Thus, our goal is to show that these $G^\vee$- and $B^\vee$-bundles are $\sigma$-invariant, where the $\sigma$-action on a bundle $P$ is given by
$$
P\mapsto (\sigma^{-1})^*(B\times^{G^\vee}G^\vee(\sigma)).
$$

By Tannakian formalism, a $G^\vee$-bundle can be equivalently described by a symmetric monoidal functor
$$
F:\mathrm{Rep}(G^\vee)\to\mathfrak z(\widehat\g)\mathrm{-mod},
$$
and our goal is to prove that it is $\sigma$-equivariant. According to \cite[1.9, 5.1-5.5]{Ra}, the resulting functor in our case equals the composition
$$
\mathrm{Rep}(G^\vee)\xrightarrow{\sim} 
\mathrm{Perv}_{G\jet t}(\mathrm{Gr}_G)\xrightarrow{(-)*\mc L^{-1}} D_{\mathrm{Gr}_G}\mathrm{-mod}\xrightarrow{\Gamma(\mathrm{Gr_G},-)} \widehat\g\mathrm{-mod}\xrightarrow{(-)^{G\jet t}}\mathfrak z(\widehat\g)\mathrm{-mod},
$$
where the first map is the geometric Satake equivalence and $\mc L$ is the critical level line bundle on $\mathrm{Gr}_G$. We define the $\sigma$-action on $\mathrm{Perv}_{G\jet t}(\mathrm{Gr}_G)$ and $D_{\mathrm{Gr}_G}\mathrm{-mod}$ by pullback. As pointed out in \cite[Section 4]{Ho}, the $\sigma$-action defined this way respects the tensor structure of these categories and makes the first map $\sigma$-equivariant.

Due to Lemma \ref{l:linebundlesareinvariant}, the line bundle $\mc L$ is $\sigma$-invariant, hence, using the fact that the convolution product is preserved by $\sigma$, the map given by the convolution with $\mc L$ is $\sigma$-equivariant. The other two maps are clearly $\sigma$-equivariant.

We follow \cite[5.6-5.11]{Ra} for the construction of the desired $B^\vee$-reduction. To construct such a one, it is enough to give a line bundle $l^\lambda$ for any dominant weight $\lambda$ of $G^\vee$ satisfying certain properties. Such bundle is defined as the $-\lambda(\rho)$-eigenspace of the operator $-t\partial_t$ on $\Gamma(\mathrm{Gr_G},V(\lambda)*\mc L^{-1})$. It is transported to the $-\sigma(\lambda)(\rho)$-eigenspace of the operator $-t\partial_t$ on $\Gamma(\mathrm{Gr_G},V(\sigma(\lambda))*\mc L^{-1})$ by $\sigma$, hence the whole system is $\sigma$-equivariant. It follows then that the constructed $B^\vee$-bundle is $\sigma$-equivariant, which finishes the proof.
\end{proof}
\begin{remark}
    {\rm
    The isomorphism $\mathrm{Spec}\,\mathfrak z(\widehat\g)\simeq\mathrm{Op}_{\g^\vee}(D)$ was originally constructed using the language of vertex algebras and Wakimoto modules, as described in \cite{F2}, \cite{F3}. We believe that it is possible to proceed through all the constructions made there and to convince oneself that they are all $\sigma$-equivariant.
    }
\end{remark}

\begin{corollary}
    The isomorphism $Z(\g)\simeq \mathrm{Fun}\,\op_{\g^\vee}(D^\times)$ is $\sigma$-equivariant.
\end{corollary}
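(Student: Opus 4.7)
The plan is to extend the argument of Theorem~\ref{t:actionsonffcenter} from the formal disc to the punctured disc. I would first set up the commutative square arising from Theorem~\ref{t:isoswithopers}:
\begin{equation*}
\begin{tikzcd}
Z(\widehat\g) \arrow[d, two heads] & \mathrm{Fun}\,\op_{\g^\vee}(D^\times) \arrow[l, "\sim"'] \arrow[d, two heads] \\
\mathfrak z(\widehat\g) & \mathrm{Fun}\,\op_{\g^\vee}(D) \arrow[l, "\sim"']
\end{tikzcd}
\end{equation*}
and verify that both vertical surjections are $\sigma$-equivariant. The right vertical is dual to the $\sigma$-equivariant embedding $\op_{\g^\vee}(D)\hookrightarrow \op_{\g^\vee}(D^\times)$ coming from the functoriality of Section~\ref{sec:dynkinonopers} and Theorem~\ref{t:sigma-invariantopers}; the left vertical is induced by the action of $Z(\widehat\g)$ on $\mathbb V_0$ via the identification $\mathfrak z(\widehat\g)\simeq \operatorname{End}_{\widehat\g}\mathbb V_0$, and all three $\sigma$-actions here are compatible since they ultimately arise from the $\sigma$-action on $\widehat\g$. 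By Theorem~\ref{t:actionsonffcenter}, the bottom horizontal iso is $\sigma$-equivariant.

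Since the vertical surjections have nontrivial kernels, this square alone does not force $\sigma$-equivariance of the top iso, and one has to engage with its construction directly. I would do so by repeating the Tannakian strategy of Theorem~\ref{t:actionsonffcenter} in the $D^\times$ setting. Namely, the $G^\vee$-bundle, its $B^\vee$-reduction, and the flat connection on $\mathrm{Spec}\,Z(\widehat\g)$ that together give a point of $\op_{\g^\vee}(D^\times)$ are built from the geometric Satake equivalence, the convolution with the critical-level line bundle $\mc L$, and appropriate global-sections and invariants functors, now realized over the punctured disc using the Wakimoto construction of \cite{F2,F3}. Each of these ingredients is $\sigma$-equivariant for exactly the same reasons as in the disc case: the Satake equivalence is $\sigma$-equivariant by \cite[Section~4]{Ho}, the line bundle $\mc L$ is $\sigma$-invariant by Lemma~\ref{l:linebundlesareinvariant}, and the remaining functors are $\sigma$-equivariant by naturality. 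The line bundles $l^\lambda$ defining the $B^\vee$-reduction generalize to the punctured-disc setting as eigenspaces of $-t\partial_t$ in the corresponding spherical/Wakimoto modules, and they are permuted by $\sigma$ according to its action on weights exactly as in the proof of Theorem~\ref{t:actionsonffcenter}.

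The main obstacle is the bookkeeping involved in verifying that the full Wakimoto/Feigin-Frenkel construction in \cite{F2,F3} is $\sigma$-equivariant step by step, particularly the construction of the irregular part of the connection that encodes the higher poles of opers in $\op_{\g^\vee}(D^\times)$. The author already flags this as plausible in the Remark following Theorem~\ref{t:actionsonffcenter}: the auxiliary data entering the construction (the pinning, the Heisenberg/Cartan subalgebra underlying Wakimoto modules, the screening operators) are all built from $\sigma$-invariant ingredients, so no structural obstacle appears, only a careful check. Once that verification is in place, the corollary follows immediately.
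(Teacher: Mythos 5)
Your proposal is correct in diagnosing that the commutative square linking the disc and punctured-disc isomorphisms does not by itself yield $\sigma$-equivariance of the top arrow, since the vertical surjections have kernels; that observation is exactly right. However, the route you then take — redoing the Tannakian/Satake argument over $D^\times$ and tracing $\sigma$-equivariance through the Wakimoto construction of \cite{F2,F3} — is substantially heavier than what the paper does, and you yourself flag the bookkeeping as an unresolved obstacle, so what you have is a program rather than a proof. The paper instead uses a lightweight functorial reduction: by \cite[Proposition 4.3.4, Lemma 4.3.5]{F3}, the isomorphism $Z(\widehat\g)\simeq\mathrm{Fun}\,\op_{\g^\vee}(D^\times)$ is obtained from the disc isomorphism $\mathfrak z(\widehat\g)\simeq\mathrm{Fun}\,\op_{\g^\vee}(D)$ by applying the complete universal enveloping algebra functor, and that functor and the attendant canonical identifications are natural, hence automatically $\sigma$-equivariant. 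Once one knows this, $\sigma$-equivariance propagates directly from Theorem~\ref{t:actionsonffcenter}, and the Wakimoto machinery never needs to be reopened. The missing idea in your proposal is precisely this recognition that the punctured-disc isomorphism is not independent data to be re-derived, but is functorially determined by the disc isomorphism; with that in hand, your first paragraph's setup already contains everything needed, and your second and third paragraphs become unnecessary.
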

\begin{proof}
    It follows from the proofs of \cite[Proposition 4.3.4, Lemma 4.3.5]{F3} that the application of the complete universal enveloping algebra functor to the isomorphism
    $$
\mathfrak z(\widehat\g)\simeq\mathrm{Fun}\mathrm{Op}_{\g^\vee}(D)
    $$
    is canonically isomorphic to the desired isomorphism in a $\sigma$-equivariant way.
\end{proof}
\begin{corollary}\label{cor:segalsugaweigenv}
    There exists a complete set of Segal-Sugawara vectors for $\mathfrak z(\widehat \g)$ all being eigenvectors of $\sigma$.
\end{corollary}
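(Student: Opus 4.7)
The plan is to extract eigenvector generators from the finite-dimensional quotient of ``new'' Segal-Sugawara vectors by exploiting that $\sigma$ has finite order. First, note that the action of $\sigma$ on $\widehat{\g}_- = t^{-1}\g[t^{-1}]$ is $\C[t,t^{-1}]$-linear, so $\sigma$ commutes with $\tau = -\partial_t$ and with the grading derivation $t\tau = -t\partial_t$. Writing $Z := \mathfrak z(\widehat\g)$, this yields a grading-preserving $\sigma$-action on $Z$ that commutes with $\tau$, and each graded piece $Z_d$ is finite-dimensional.

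Next I would reformulate what it means to be a complete set of Segal-Sugawara vectors. Fix any such set $S_1,\ldots,S_l$ of degrees $d_1\le \cdots\le d_l$. Since $Z$ is freely generated as a polynomial algebra by $\{\tau^k S_i\}_{i,k\ge 0}$, the space of indecomposables $P := Z_+/Z_+^2$ is graded with basis $\{[\tau^k S_i]\}$, and the derivation $\tau$ descends to an injective map $\tau:P\to P$ whose cokernel
\[
Q := Z_+/(Z_+^2 + \tau Z)
\]
has basis $[S_1],\ldots,[S_l]$ and graded dimension $\dim Q_d = \#\{i : d_i = d\}$. By the short exact sequence $0\to P\xrightarrow{\tau}P\to Q\to 0$ together with graded Nakayama, any homogeneous lifts $\tilde S_i\in Z_{d_i}$ of a graded basis of $Q$ automatically constitute a complete set of Segal-Sugawara vectors.

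It remains to choose such lifts to be $\sigma$-eigenvectors. Since $\sigma$ commutes with $\tau$ and preserves the grading, it descends to a graded action on the finite-dimensional space $Q$. Because $\sigma$ has finite order, it acts semisimply on each $Q_d$, so we may select a graded basis of $Q$ consisting of $\sigma$-eigenvectors. For each such basis class $[\tilde S_i]\in Q_{d_i}$, its preimage in the finite-dimensional $\sigma$-stable space $Z_{d_i}$ (note that $Z_+^2 + \tau Z$ is $\sigma$-stable) is again $\sigma$-stable, and semisimplicity provides a $\sigma$-eigenvector lift $\tilde S_i\in Z_{d_i}$. By the previous paragraph, the $\tilde S_1,\ldots,\tilde S_l$ form the desired complete set of Segal-Sugawara vectors.

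The main subtlety is the graded Nakayama characterization of Segal-Sugawara sets in terms of bases of $Q$: one must carefully verify that homogeneous lifts of a basis of $Q$ really do give, via their $\tau$-iterates, a free polynomial generating set of $Z$. This is essentially formal once one has the exact sequence above and the standard fact that in a positively graded connected polynomial $\C$-algebra, any homogeneous lifts of an indecomposable basis yield a polynomial presentation. With that in hand, everything else reduces to routine semisimple linear algebra applied degree-by-degree.
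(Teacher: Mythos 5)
Your proof is correct, but it takes a genuinely different route from the paper's. The paper establishes this corollary as a one-liner from the $\sigma$- and $\mathrm{Der}\,\C\jet t$-equivariant identification $\mathrm{Spec}\,\mathfrak z(\widehat\g)\simeq \mathrm{Op}_{\g^\vee}(D)\simeq V_{\mathrm{can}}\jet t$: since $V_{\mathrm{can}}$ is a finite-dimensional graded $\sigma$-module, one picks a $\sigma$-eigenbasis and reads off the corresponding Segal-Sugawara vectors. This is very short, but it leans on the $\sigma$-equivariance of the Feigin--Frenkel isomorphism (Theorem~\ref{t:actionsonffcenter}), which in turn uses geometric Satake, the critical line bundle, and the $B^\vee$-reduction construction. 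Your argument instead stays entirely on the algebraic side of $\mathfrak z(\widehat\g)$: after observing that $\sigma$ commutes with $\tau$ and the grading (which is elementary, since $\sigma$ acts only on the $\g$-tensor factor of $\widehat\g_-$), you characterize complete sets of Segal-Sugawara vectors as homogeneous lifts of a graded basis of $Q=Z_+/(Z_+^2+\tau Z)$ via graded Nakayama, and then use semisimplicity of the finite-order automorphism $\sigma$ to choose $\sigma$-eigenvector lifts degree by degree. The only external input you need is the Feigin--Frenkel theorem itself (that $\mathfrak z(\widehat\g)$ is a polynomial ring on $\{\tau^kS_i\}$), not its $\sigma$-equivariant oper realization. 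This makes your proof logically lighter and self-contained; what you lose is the explicitness of the paper's construction, where the eigenvector generators arise directly from a concrete $\sigma$-eigenbasis of $V_{\mathrm{can}}$, which is convenient for the later compatibility arguments in Sections~6--7. The graded-Nakayama step you flag as the main subtlety is indeed the crux, and your justification --- the short exact sequence $0\to P\xrightarrow{\tau}P\to Q\to 0$ together with the standard fact that homogeneous lifts of an indecomposable basis of a connected graded polynomial algebra are free generators --- is sound; one should just note that the lift of a class $[\tilde S_i]\in Q_{d_i}$ to a $\sigma$-eigenvector uses that $(Z_+^2+\tau Z)_{d_i}$ is a $\sigma$-stable subspace of the finite-dimensional $Z_{d_i}$, so a $\sigma$-stable complement exists and restricts to an isomorphism of $\sigma$-modules onto $Q_{d_i}$, which is exactly what you say.
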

\begin{proof}
    We have a $\sigma$- and $\mathrm{Der}\,\C\jet t$-invariant identification
    $$
\mathrm{Spec}\,\mathfrak z(\widehat\g)\simeq \mathrm{Op}_{\g^\vee}(D)\simeq V_{\mathrm{can}}\jet t.
    $$
    Since $V_{\mathrm{can}}$ completely splits into $\sigma$-eigenspaces, the conclusion follows.
\end{proof}

Before proceeding to the next theorem, we define one more notion that we will use frequently. We will call the pair $\chi\in (\g^\vee)^*$ and $\chi'\in (\g_\sigma^\vee)^*$ {\bfseries compatible} if the elements $\chi$ and $\chi'$ are sent to the same element of $\h_\sigma\git W^\sigma\subset\h\git W$ via the canonical projections $\pi:(\g^\vee)^*\to \h\git W$ and $\pi:(\g_\sigma^\vee)^*\to \h_\sigma\git W^\sigma$. 
\begin{lemma}\label{l:densecomppairs}
    For any open dense subsets $U\subset (\g^\vee)^*$ and $V\subset (\g_\sigma^\vee)^*$ such that $U^\sigma\ne \varnothing$ there exists a compatible pair $(\chi,\chi')$ such that $\chi\in U^\sigma$ and $\chi'\in V$. 
\end{lemma}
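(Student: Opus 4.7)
My plan is to translate the existence of a compatible pair into the existence of an intersection point of two nonempty open subsets of the irreducible variety $\h_\sigma\git W^\sigma$. First, I would use Proposition \ref{p:quotspacesisosfordynkin} to canonically identify $((\g^\vee)^*)^\sigma$ with $(\g_\sigma^\vee)^*$ in a way compatible with Chevalley-type projections: the restriction of $\pi:(\g^\vee)^*\to\h\git W$ to $((\g^\vee)^*)^\sigma$ factors through $(\h\git W)^\sigma=\h_\sigma\git W^\sigma$ and, by the proposition, agrees there with the Chevalley map $\pi_\sigma:(\g_\sigma^\vee)^*\to\h_\sigma\git W^\sigma$ of the smaller Lie algebra. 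Under this identification, the compatibility condition for $(\chi,\chi')$ with $\chi\in U^\sigma$ and $\chi'\in V$ becomes the single equation $\pi_\sigma(\chi)=\pi_\sigma(\chi')$ in $\h_\sigma\git W^\sigma$.

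Since $U\subset(\g^\vee)^*$ is open, $U^\sigma=U\cap((\g^\vee)^*)^\sigma$ is open in $(\g_\sigma^\vee)^*$, and it is nonempty by hypothesis; as $(\g_\sigma^\vee)^*$ is an irreducible affine space, $U^\sigma$ is open dense, and so is $V$ by assumption. Next I would invoke the classical flatness of the Chevalley morphism (due to Kostant), which guarantees that $\pi_\sigma$ is an open map. Consequently $\pi_\sigma(U^\sigma)$ and $\pi_\sigma(V)$ are nonempty open subsets of the irreducible variety $\h_\sigma\git W^\sigma$, so their intersection is nonempty. Picking any $a$ in this intersection and then choosing preimages $\chi\in U^\sigma\cap\pi_\sigma^{-1}(a)$ and $\chi'\in V\cap\pi_\sigma^{-1}(a)$ yields the desired compatible pair.

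The only delicate ingredient is that the restriction of $\pi$ and the Chevalley map $\pi_\sigma$ for $\g_\sigma^\vee$ genuinely coincide under the identifications above; this is exactly the content of Proposition \ref{p:quotspacesisosfordynkin}, whose proof via the Kostant section $(p_{-1}+\g^{p_1})^\sigma=p_{-1}+\g_\sigma^{p_1}$ also ensures compatibility with the quotient structures. If one prefers to avoid appealing to flatness of the Chevalley morphism, the same conclusion can be reached via generic flatness of $\pi_\sigma$ together with irreducibility of the generic fiber (a regular semisimple $G_\sigma^\vee$-orbit $G_\sigma^\vee/T_\sigma^\vee$) and upper semicontinuity of fiber dimension, which jointly imply that the image under $\pi_\sigma$ of any open dense subset of $(\g_\sigma^\vee)^*$ contains an open dense subset of $\h_\sigma\git W^\sigma$; two such subsets intersect in the irreducible base, and the argument proceeds as before.
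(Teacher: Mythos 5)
Your overall strategy — translate compatibility into an equation in the irreducible base $\h_\sigma\git W^\sigma$, show both relevant images are nonempty open via openness of the Chevalley projections, and intersect — is exactly the paper's approach. However, your first step contains a genuine error: you cannot ``canonically identify $((\g^\vee)^*)^\sigma$ with $(\g_\sigma^\vee)^*$,'' and this is not what Proposition \ref{p:quotspacesisosfordynkin} says (that proposition identifies the GIT quotients $(\g\git G)^\sigma$ and $\g_\sigma\git G_\sigma$, not the affine spaces upstairs). In fact the fixed subalgebra $(\g^\vee)^\sigma$ and the Langlands dual $\g_\sigma^\vee=(\g_\sigma)^\vee$ are in general non-isomorphic Lie algebras: in type $A_{2n-1}$ one has $(\g^\vee)^\sigma\simeq\mathfrak{sp}_{2n}$ (type $C_n$) while $\g_\sigma^\vee\simeq\mathfrak{so}_{2n+1}$ (type $B_n$), and similarly for $A_{2n}$ and $D_n$; they are Langlands dual to each other rather than equal. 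So $U^\sigma$ lives in $((\g^\vee)^\sigma)^*$ and not in $(\g_\sigma^\vee)^*$, and you cannot apply ``openness of $\pi_\sigma$'' to $U^\sigma$.

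The fix is straightforward and leaves the rest of your argument intact: instead of forcing the false identification, apply the Chevalley map of the reductive Lie algebra $(\g^\vee)^\sigma$ itself, i.e.\ $p:((\g^\vee)^*)^\sigma\to \h_\sigma\git W^\sigma$, which (as you correctly note) is the restriction of $\pi$ and whose target is $(\h\git W)^\sigma=\h_\sigma\git W^\sigma$ by Proposition \ref{p:quotspacesisosfordynkin}. This $p$ is a Chevalley projection, hence open; it plays the role you tried to assign to $\pi_\sigma$ on the $U$-side, while $\pi_\sigma:(\g_\sigma^\vee)^*\to\h_\sigma\git W^\sigma$ is used on the $V$-side. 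With this correction your proof matches the paper's and is in fact stated a bit more carefully than the published one-liner, which passes through $\pi(U)\cap\h_\sigma\git W^\sigma$ (a priori larger than $p(U^\sigma)$) and does not spell out why the compatible $\chi$ can be taken $\sigma$-invariant; working with $U^\sigma$ and $p$ directly, as you do, resolves that point cleanly.
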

\begin{proof}
    Since projections are open, then the images of $U$ and $V$ are open in $\h\git W$ and $\h_\sigma\git W^\sigma$, respectively. Since $U$ contains a $\sigma$-invariant element, then $\pi(U)\cap \h_\sigma\git W^\sigma$ is non-empty, which implies that this set intersects $\pi(V)$. This finishes the proof.
\end{proof}

In the next proposition, we will replace $\g^\vee$ by $\g$ to simplify the notation, but the statement and the proof will stay the same for $\g^\vee$.
\begin{proposition}\label{p:simplespecanddiagofgaudin}\
    \begin{itemize}
        \item[(i)] The subalgebra $\mc B_\chi(\g)$ is diagonalizable and has a simple spectrum on $V(\lambda)$ for generic $\chi\in\g^*$. In other words, for such $\chi$ there exists a basis of $V(\lambda)$, in which $\mc B^\lambda_\chi(\g)$ coincides with the algebra of diagonal matrices.
        \item[(ii)] There exists a $\sigma$-invariant element $\chi\in\g^*$ such that $\mc B_\chi(\g)$ is diagonalizable and has a simple spectrum on $V(\lambda)$.
    \end{itemize}
\end{proposition}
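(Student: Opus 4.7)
The plan is to translate both claims into the geometry of the monodromy-free oper scheme via Theorem \ref{t:isoswithopers}, then for (ii) combine descent to $\g_\sigma^\vee$-opers (Theorem \ref{t:sigma-invariantopers}) with Jantzen's twining formula.

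For (i), Theorem \ref{t:isoswithopers} yields $\mc B_\chi^\lambda(\g) \simeq \mathrm{Fun}\,\op_{\g^\vee}^{\mathrm{MF}}(\mathbb P^1)^{1,2,\lambda}_{0,\infty,\pi(-\chi)}$, and diagonalizability with simple spectrum on $V(\lambda)$ is equivalent to this scheme being reduced of cardinality $\dim V(\lambda)$. Both conditions cut out a Zariski-open subset $U_\lambda \subset \g^*$. Non-emptiness is the classical Bethe ansatz completeness theorem of Feigin-Frenkel-Rybnikov \cite{FFR}: for generic $\chi$, the Bethe vectors form a simultaneous eigenbasis of $V(\lambda)$ under $\mc B_\chi^\lambda(\g)$.

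For (ii), assume the case of interest $\sigma(\lambda) = \lambda$. Then $U_\lambda$ is $\sigma$-invariant, since conjugation by $\sigma$ carries $\mc B_\chi(\g) \curvearrowright V(\lambda)$ to $\mc B_{\sigma(\chi)}(\g) \curvearrowright V(\sigma(\lambda)) = V(\lambda)$. To exhibit $\chi \in U_\lambda \cap (\g^*)^\sigma$, apply (i) to $\g_\sigma$: there is a Zariski-open dense $U'_\lambda \subset (\g_\sigma)^*$ over which $\op_{\g_\sigma^\vee}^{\mathrm{MF}}(\mathbb P^1)^{1,2,\lambda}_{0,\infty,\pi(-\chi')}$ is reduced of cardinality $\dim W(\lambda)$. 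For any $\sigma$-invariant $\chi$ compatible with some $\chi' \in U'_\lambda$, Theorem \ref{t:sigma-invariantopers} identifies this $\g_\sigma^\vee$-oper scheme with the $\sigma$-fixed subscheme of $X_\chi := \op_{\g^\vee}^{\mathrm{MF}}(\mathbb P^1)^{1,2,\lambda}_{0,\infty,\pi(-\chi)}$, so $|X_\chi^\sigma| = \dim W(\lambda) = \mathrm{tr}(\sigma|V(\lambda))$ by Jantzen (Theorem \ref{t:Jantzen}). By flatness of the oper family over $\g^*$, the total length of $X_\chi$ equals $\dim V(\lambda)$ uniformly in $\chi$, so the non-fixed part has length $\dim V(\lambda) - \dim W(\lambda)$, automatically divisible by $|\sigma|$ via Jantzen (for $|\sigma|=2$ this is $2\dim V(\lambda)^-$; for $|\sigma|=3$ in type $D_4$ it is $3\dim V(\lambda)^{(\omega)}$). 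Reducedness of $X_\chi^\sigma$ at compatible pairs, together with upper semicontinuity of reducedness of the full $X_\chi$ restricted to $(\g^*)^\sigma$, forces reducedness of $X_\chi$ itself at generic compatible pairs, producing a $\sigma$-invariant $\chi \in U_\lambda$.

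The main obstacle is the last step: reducedness of $X_\chi^\sigma$ controls only the fixed part, and non-fixed $\sigma$-orbits on $X_\chi$ could in principle be non-reduced, with several orbits colliding to give the right total length without full set-theoretic cardinality. Ruling this out requires either (a) exhibiting a single $\sigma$-invariant $\chi \in U_\lambda$ explicitly --- for instance, a regular semisimple element of $\h_\sigma$ where the spectrum of $\mc B_\chi(\g)$ on $V(\lambda)$ can be computed via the Harish-Chandra/central-character description --- and then invoking the openness of $U_\lambda \cap (\g^*)^\sigma$ inside $(\g^*)^\sigma$, or (b) a $\sigma$-equivariant deformation argument averaging a generic $\chi_0 \in U_\lambda \subset \g^*$ into $(\g^*)^\sigma$ while tracking reducedness of the corresponding oper scheme. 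Either route converts the counting identities from Jantzen and Theorem \ref{t:sigma-invariantopers} into the required geometric non-degeneracy.
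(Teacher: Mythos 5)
Part (i) is fine and is essentially the paper's route: both you and the paper cite \cite{FFR} for the fact that for generic $\chi$ the algebra $\mc B_\chi^\lambda(\g)$ is the full algebra of diagonal matrices in a simultaneous eigenbasis (the paper cites \cite[Corollary~4]{FFR}; your Bethe-completeness phrasing is the same result read off the oper side). The substantive issue is (ii), and there the gap you flag at the end is real and is not repaired by either of your suggested routes as stated. Your entire argument is premised on first producing an element of $U_\lambda\cap(\g^*)^\sigma$, but that is exactly the content of (ii) and it does not come from $\sigma$-invariance of $U_\lambda$: knowing that $U_\lambda$ is a $\sigma$-stable Zariski-dense open subset of $\g^*$ says nothing about whether it meets the fixed locus $(\g^*)^\sigma$. (The complement $\g^*\setminus(\g^*)^\sigma$ of the fixed locus is itself $\sigma$-stable, open and dense, and misses $(\g^*)^\sigma$ entirely, so density plus equivariance is genuinely insufficient.) The Jantzen/oper-counting machinery that follows inherits the same circularity — you need a $\sigma$-invariant $\chi$ in hand before you can speak about $X_\chi^\sigma$ at all — and, as you yourself observe, the length identity $\dim V(\lambda)-\dim W(\lambda)$ constrains only the total length of the non-fixed part, not the reducedness of the full scheme $X_\chi$.

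The paper closes (ii) with a much more elementary and completely different tool. Choose a compact real form $\g^\R$ of $\g$ on which $\sigma$ acts (possible by \cite[Section~3, Proposition~7]{O}); then $(\g^\R)^\sigma=\g_\sigma^\R\neq 0$, and it contains regular elements of $\g$. By \cite[Lemma~2]{FFR}, $\mc B_\chi$ acts diagonalizably on $V(\lambda)$ for \emph{every} $\chi\in i\g^\R$ — this is a Hermiticity statement with no genericity hypothesis — and by \cite[Corollary~3]{FFR} it has a cyclic vector in $V(\lambda)$ whenever $\chi$ is regular. Any regular $\chi\in i\g_\sigma^\R$ is therefore a concrete $\sigma$-invariant element for which $\mc B_\chi$ is diagonalizable with a cyclic vector, hence with simple spectrum. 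Your suggested route~(a) — produce an explicit $\sigma$-invariant $\chi\in U_\lambda$ — is the right instinct, but a regular element of $\h_\sigma$ combined with the Harish-Chandra description will not by itself yield simple spectrum (weight spaces of $V(\lambda)$ need not be one-dimensional), whereas the compact-real-form argument delivers the witness directly without passing through the oper picture at all.
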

\begin{proof}
    Statement $(i)$ is essentially \cite[Corollary 4]{FFR}. We prove $(ii)$ in several steps.

    Choose a compact real form $\g^\R$ of $\g$ such that $\sigma$ acts on it, which is possible by \cite[Section 3, Proposition 7]{O} (I thank Mischa Elkner for this reference). Clearly, its fixed points coincide with $\g_\sigma^\R$, which is non-zero. According to \cite[Lemma 2]{FFR}, $\mc B_\chi(\g)$ acts diagonalizably on $V(\lambda)$ for any $\lambda$ and $\chi\in i\g^\R$. In particular, it is true for $\chi\in i\g_\sigma^\R$. Now, \cite[Corrolary 3]{FFR} implies that when $\chi$ is regular, $\mc B_\chi$ has a cyclic vector in $V(\lambda)$. Since $\g_\sigma^\R$ contains regular elements, then $i\g_\sigma^\R$ does. Thus, we can find a regular element $\chi\in i\g_\sigma^\R$ for which $\mc B_\chi$ is diagonalizable on $V(\lambda)$ and has a cyclic vector there. Therefore, $\mc B_\chi$ additionally has a simple spectrum on $V(\lambda)$, as desired.
\end{proof}

\begin{corollary}\label{cor:genericcomppair}
    There is an open dense subset $U\subset(\g_\sigma^\vee)^*$ such that for any $\chi'\in U$ the following assertions are satisfied:
\begin{itemize}
    \item $\mc B_{\chi'}(\g_\sigma^\vee)$ is diagonalizable and has a simple spectrum on any irreducible representation of $\g_\sigma^\vee$,
    \item There is a $\sigma$-invariant element $\chi\in (\g^\vee)^*$ such that $(\chi,\chi')$ is a compatible pair and $\mc B_{\chi}(\g^\vee)$ is diagonalizable and has a simple spectrum on any irreducible representation of $\g^\vee$.
\end{itemize}
\end{corollary}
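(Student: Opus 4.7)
The strategy is to intersect a generic-diagonalizability locus in $(\g_\sigma^\vee)^*$ with the preimage, under the characteristic projection, of the image of a suitable $\sigma$-fixed generic locus in $(\g^\vee)^*$, invoking Proposition \ref{p:quotspacesisosfordynkin} to link the two invariant-theoretic quotients.

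First, applying part (i) of the preceding proposition to both $\g^\vee$ and $\g_\sigma^\vee$, I extract Zariski open dense subsets
\[
V_1 := \{\chi \in (\g^\vee)^* : \mc B_\chi(\g^\vee) \text{ is diagonalizable with simple spectrum on every irrep}\}
\]
and $V_2 \subset (\g_\sigma^\vee)^*$ defined analogously. The set $V_1$ is automatically $\sigma$-invariant, because $\sigma$ permutes the isomorphism classes of irreducible $\g^\vee$-modules and these spectral properties are stable under that permutation. Part (ii) of the preceding proposition ensures $V_1^\sigma \ne \varnothing$.

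Next, consider the characteristic projections $\pi : (\g^\vee)^* \to \h \git W$ and $\pi_\sigma : (\g_\sigma^\vee)^* \to \h_\sigma \git W^\sigma$. By Proposition \ref{p:quotspacesisosfordynkin}, $(\h \git W)^\sigma = \h_\sigma \git W^\sigma$, and the restriction of $\pi$ to the $\sigma$-fixed locus coincides, under the natural identification $((\g^\vee)^*)^\sigma \simeq (\g_\sigma^\vee)^*$, with $\pi_\sigma$. Since $\pi_\sigma$ is flat and therefore open, $\pi(V_1^\sigma)$ is open and non-empty, hence dense in $\h_\sigma \git W^\sigma$. Now set
\[
U := V_2 \cap \pi_\sigma^{-1}(\pi(V_1^\sigma)).
\]
Flatness of $\pi_\sigma$ makes $\pi_\sigma^{-1}(\pi(V_1^\sigma))$ open dense in $(\g_\sigma^\vee)^*$, and so $U$ is open dense. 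For any $\chi' \in U$, membership in $V_2$ delivers the first bullet, while membership of $\pi_\sigma(\chi')$ in $\pi(V_1^\sigma)$ produces a $\sigma$-invariant $\chi \in V_1$ with $\pi(\chi) = \pi_\sigma(\chi')$, that is, a compatible pair realizing the second bullet.

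The main delicacy is justifying that $V_1$ (and analogously $V_2$) is genuinely Zariski open dense with the defining condition imposed for \emph{every} irreducible simultaneously, whereas part (i) of the preceding proposition is stated for a single fixed $V(\lambda)$. The proof of part (ii) already supplies the stronger uniform statement: regular elements in a $\sigma$-stable compact real form simultaneously diagonalize $\mc B_\chi$ with simple spectrum on every irreducible, since the Feigin--Frenkel--Reshetikhin results invoked there apply uniformly in $\lambda$. Consequently, one such witness lies in each of the Zariski open loci cut out by imposing the condition on a fixed $\lambda$, and a spreading argument --- or the algebraic reformulation after passing to the oper side via Theorem \ref{t:isoswithopers} --- shows that the intersection of these loci contains a Zariski open dense subset, as required.
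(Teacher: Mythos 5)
Your explicit construction of $U := V_2 \cap \pi_\sigma^{-1}(\pi(V_1^\sigma))$ is sound and in effect fills in what the paper's terse ``easily follows from the last two results'' leaves to the reader: you invoke the openness of the adjoint-quotient projections (the same ingredient used in the proof of Lemma~\ref{l:densecomppairs}), the identification $(\h\git W)^\sigma = \h_\sigma\git W^\sigma$ from Proposition~\ref{p:quotspacesisosfordynkin}, the $\sigma$-invariance of the good locus, and nonemptiness of $V_1^\sigma$ via part~(ii) of the preceding proposition. This is the same underlying mechanism the paper's short proof is gesturing at, merely made precise.

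The one step that is not actually a proof is your final paragraph. You rightly observe that part~(i) of the preceding proposition gives, for each fixed $\lambda$, a Zariski open dense $U_\lambda\subset(\g^\vee)^*$ on which $\mc B_\chi$ has simple spectrum on $V(\lambda)$, and that the corollary as literally phrased asks the intersection $\bigcap_\lambda U_\lambda$ to contain an open dense set. Over $\C$ a countable intersection of Zariski open dense subsets is Euclidean dense but need not be Zariski open, and the appeal to ``a spreading argument --- or the algebraic reformulation after passing to the oper side'' does not establish this; exhibiting a single element of $\bigcap_\lambda U_\lambda$ (which the compact-real-form argument of part~(ii) does give) is necessary but not sufficient. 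That said, the paper's own one-line proof has the same latent issue, and in every subsequent use of this corollary (the proposition preceding Theorem~\ref{t:solvetheconj} and the theorem's proof) only a single fixed $\lambda$ is in play, where your construction is fully rigorous. So the weakness is in the corollary's literal ``any irreducible representation'' phrasing, not in the argument needed for the paper's purposes; you should either scope the corollary to a fixed $\lambda$ or give a genuine proof of the uniform claim rather than a gesture.
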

\begin{proof}
    Easily follows from the last two results.
\end{proof}

Now, we are ready to state and prove our main result.
\begin{theorem}\label{t:isosofgaudin}\,
   \begin{enumerate}
       \item We have the following commutative diagram:
\begin{center}
{\small
    \begin{tikzcd}
Z_u(\widehat{\g^\vee})_\sigma \arrow[d, two heads]                            & \mathrm{Fun}\left(\mathrm{Op}_\g(D_u^\times)\right)^\sigma \arrow[l, "\sim"'] \arrow[d, two heads]  \arrow[r, "\sim"]                                   & \mathrm{Fun}\,\mathrm{Op}_{\g_\sigma}(D_u^\times) \arrow[r, "\sim"] \arrow[d, two heads]                                  & Z_u(\widehat{\g_\sigma^\vee}) \arrow[d, two heads]                          \\
\mathfrak z_u(\widehat{\g^\vee})_\sigma \arrow[d, two heads]                        & \mathrm{Fun}\left(\mathrm{Op}_\g(D_u)\right)^\sigma \arrow[d, two heads] \arrow[l, "\sim"']\arrow[r, "\sim"]                                           & {\mathrm{Fun}\,\mathrm{Op}_{\g_\sigma}(D_u)}  \arrow[r, "\sim"] \arrow[d, two heads]                                        & \mathfrak z_u(\widehat{\g_\sigma^\vee}) \arrow[d, two heads]\\
{\mathcal Z_{(z_i),\infty}(\g^\vee)_\sigma}\arrow[d, two heads]  & {\mathrm{Fun}\left(\mathrm{Op}_\g(\mathbb P^1)_{(z_i),\infty}\right)^\sigma} \arrow[l, "\sim"']\arrow[r, "\sim"]\arrow[d, two heads]& {\mathrm{Fun}\,\mathrm{Op}_{\g_\sigma}(\mathbb P^1)_{(z_i),\infty}} \arrow[r, "\sim"]\arrow[d, two heads] & {\mathcal Z_{(z_i),\infty}(\g_\sigma^\vee)}\arrow[d, two heads]    \\
{\mathcal Z_{(z_i),\infty}^{(m_i),m_\infty}(\g^\vee)_\sigma}  & {\mathrm{Fun}\left(\mathrm{Op}_\g(\mathbb P^1)_{(z_i),\infty}^{(m_i),m_\infty}\right)^\sigma} \arrow[l, "\sim"']\arrow[r, "\sim"]& {\mathrm{Fun}\,\mathrm{Op}_{\g_\sigma}(\mathbb P^1)_{(z_i),\infty}^{(m_i),m_\infty}} \arrow[r, "\sim"] & {\mathcal Z_{(z_i),\infty}^{(m_i),m_\infty}(\g_\sigma^\vee)} \makebox[0pt][l]{\,.}     
\end{tikzcd}
}
\end{center}

    \item Let $\chi,\chi'$ be a compatible pair with $\sigma(\chi)=\chi$. Then we have a commutative diagram
\begin{center}
{\small
    \begin{tikzcd}
{\mathcal B(\g^\vee)_\sigma} \arrow[d, two heads] & {\mathrm{Fun}\left(\mathrm{Op}_\g(\mathbb P^1)_{0,\infty}^{1,2}\right)^\sigma} \arrow[l, "\sim"'] \arrow[r, "\sim"]\arrow[d, two heads] & {\mathrm{Fun}\,\mathrm{Op}_{\g_\sigma}(\mathbb P^1)_{0,\infty}^{1,2}}  \arrow[d, two heads] \arrow[r, "\sim"] & {\mathcal B(\g_\sigma^\vee)} \arrow[d, two heads] \\
{\mathcal B_\chi(\g^\vee)_\sigma}\arrow[d, two heads]                 & {\mathrm{Fun}\left(\mathrm{Op}_\g(\mathbb P^1)_{0,\infty,\pi(-\chi)}^{1,2}\right)^\sigma} \arrow[l, "\sim"']\arrow[r, "\sim"]\arrow[d, two heads]                    & {\mathrm{Fun}\,\mathrm{Op}_{\g_\sigma}(\mathbb P^1)_{0,\infty,\pi(-\chi')}^{1,2}} \arrow[r, "\sim"]\arrow[d, two heads]                           & {\mathcal B_{\chi'}(\g^\vee)}\arrow[d, two heads]\\
{\mathcal B_\chi^\lambda(\g^\vee)_\sigma}                 & {\mathrm{Fun}\left(\mathrm{Op}_\g^{\mathrm{MF}}(\mathbb P^1)_{0,\infty,\pi(-\chi)}^{1,2,\lambda}\right)^\sigma} \arrow[l, two heads] \arrow[r, "\sim"]                   & {\mathrm{Fun}\,\mathrm{Op}_{\g_\sigma}^{\mathrm{MF}}(\mathbb P^1)_{0,\infty,\pi(-\chi')}^{1,2,\lambda}} \arrow[r,  two heads]                           & {\mathcal B_{\chi'}^\lambda(\g^\vee)}
 \makebox[0pt][l]{\,.}                
\end{tikzcd}
}
\end{center}
Moreover, for any $(\chi,\chi')$ as in Corollary \ref{cor:genericcomppair} the surjections in the bottom row are isomorphisms.

   \end{enumerate}

In all commutative diagrams above the maps between the functions on opers are induced by natural inclusions of the spaces of opers.
   
\end{theorem}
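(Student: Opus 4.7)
The plan is to assemble both diagrams from three already-established ingredients: (a) the identifications $\op_{\g_\sigma}(X)\simeq \op_\g(X)^\sigma$ from Theorem~\ref{t:sigma-invariantopers}, in each version with prescribed orders of singularity, residues, and monodromy; (b) the isomorphisms of Theorem~\ref{t:isoswithopers} between Gaudin-type algebras and function rings on oper spaces; and (c) the $\sigma$-equivariance of those isomorphisms---for the Feigin-Frenkel center this is Theorem~\ref{t:actionsonffcenter}, and for all its quotients $\mathcal Z_{(z_i),\infty}(\g^\vee),\mc B(\g^\vee),\ldots$ it follows because $\Psi_{u;(z_i),\infty}$ is built entirely from $\sigma$-equivariant Lie-algebra data together with the $\sigma$-fixed points $z_i,\infty\in\mathbb P^1$. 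Throughout, I would read the superscript $\sigma$ on $\mathrm{Fun}(X)$ in the diagrams as the algebra of functions on the scheme-theoretic fixed locus $X^\sigma$, which for any affine $X=\mathrm{Spec}\,A$ equals $A/\langle f-\sigma(f)\rangle=A_\sigma$; this is precisely how the ``coinvariant'' columns on the far left and far right are meant to compare across the middle.

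With those ingredients in place, the horizontal maps in part~(1) are essentially forced. The middle horizontal isomorphisms come from applying $\mathrm{Fun}$ to Theorem~\ref{t:sigma-invariantopers} and reading $\mathrm{Fun}(X^\sigma)=\mathrm{Fun}(X)_\sigma$. The left horizontal isomorphisms come from applying $(-)_\sigma$ to the $\sigma$-equivariant isomorphisms of Theorem~\ref{t:isoswithopers}, and the right ones from Theorem~\ref{t:isoswithopers} applied with $\g^\vee$ replaced by $\g_\sigma^\vee$; the target $\h\git W$ for the residues on both sides is canonically identified via Proposition~\ref{p:quotspacesisosfordynkin}. Vertical compatibility is inherited from the $\sigma$-equivariance of all the quotient maps already present in Theorem~\ref{t:isoswithopers}.

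Part~(2) follows the same strategy after one checks that the residue and monodromy data transport correctly. The $1$-residue $\pi(-\lambda-\rho)$ at $0$ is $\sigma$-invariant because $\lambda$ is by assumption and $\rho$ is too (the sum of fundamental weights, which $\sigma$ permutes); the $2$-residue $\pi(-\chi)$ at $\infty$ is $\sigma$-invariant since $\chi$ is, and the identification $\pi(-\chi)\leftrightarrow\pi(-\chi')$ across the middle iso is exactly the content of the compatibility of $(\chi,\chi')$ via Proposition~\ref{p:quotspacesisosfordynkin}. Monodromy-freeness is intrinsic and therefore $\sigma$-invariant, so the subspaces of monodromy-free opers match up. The top two rows of part~(2) then follow exactly as in part~(1).

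For the final claim---that for $(\chi,\chi')$ as in Corollary~\ref{cor:genericcomppair} the outer surjections in the bottom row of part~(2) are isomorphisms---the point is that under those genericity hypotheses both $\mc B_\chi^\lambda(\g^\vee)$ and $\mc B_{\chi'}^\lambda(\g_\sigma^\vee)$ act diagonalizably with simple spectrum, hence are reduced commutative algebras. Then by Theorem~\ref{t:isoswithopers} the canonical surjections $\mathrm{Fun}(\op_\g^{\mathrm{MF}}(\mathbb P^1)^{1,2,\lambda}_{0,\infty,\pi(-\chi)})\twoheadrightarrow \mc B_\chi^\lambda(\g^\vee)$ and its $\g_\sigma^\vee$-analogue are already isomorphisms, so the induced outer surjections in the bottom row---the left one obtained by applying $(-)_\sigma$, the right one by invoking Theorem~\ref{t:isoswithopers} on the $\g_\sigma^\vee$-side---are isomorphisms as well. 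The main obstacle I expect is the careful bookkeeping of $\sigma$-equivariance for $\Psi_{u;(z_i),\infty}$ and all its induced quotient maps; this is routine but lengthy, since one must check at each stage that the evaluation and restriction operations used to build the Gaudin algebras commute with $\sigma$.
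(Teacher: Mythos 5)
Your proposal is correct and follows essentially the same approach as the paper, which assembles the diagrams from Theorem~\ref{t:actionsonffcenter} ($\sigma$-equivariance of the Feigin--Frenkel/oper isomorphisms), Theorem~\ref{t:isoswithopers} (the Gaudin/oper diagrams), and Theorem~\ref{t:sigma-invariantopers} ($\op_{\g_\sigma}\simeq\op_\g^\sigma$), reading $\mathrm{Fun}(X)^\sigma=\mathrm{Fun}(X^\sigma)=\mathrm{Fun}(X)_\sigma$. You spell out two points the paper's terse proof leaves implicit---the $\sigma$-equivariance of $\Psi_{u;(z_i),\infty}$ and the genericity argument for the bottom row of part~(2)---and both are handled correctly.
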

\begin{proof}
By Theorem \ref{t:actionsonffcenter}, $\sigma$ acts on both diagrams from Theorem \ref{t:isoswithopers}, which induce the left half of the diagrams from $(1)$ and $(2)$. Theorem \ref{t:sigma-invariantopers} gives the horizontal isomorphisms in the middle.
\end{proof}

\begin{remark}
{\rm
 One of the inclusions of opers in the diagrams from the previous theorem might not be a closed immersion of schemes, it is the inclusion
 $$
    \mathrm{Op}_\g(\mathbb P^1)_{(z_i),\infty}\hookrightarrow \op_\g(D_u).
 $$
 The reason is that $\mathrm{Op}_\g(\mathbb P^1)_{(z_i),\infty}$ is an ind-scheme and is not necessarily a scheme. Nevertheless, it is a union of schemes $\mathrm{Op}_\g(\mathbb P^1)_{(z_i),\infty}^{(m_i),m_\infty}$, which are closed subschemes of $\op_\g(D_u)$, whence the surjection between the spaces of functions.
 }
\end{remark}

\begin{remark}
{\rm
 Note that having the projections fixed, we can choose any $\chi$ and $\chi'$ in the same $G^\vee$ and $G_\sigma^\vee$ orbits, respectively, to construct a compatible pair. Suppose that we have pairs $(\chi,\chi')$ and $(g\cdot \chi,g'\cdot \chi')$. Then $\mc B_{\g\cdot\chi}=g\cdot \mc B_\chi$, where the action of $G^\vee$ comes from its action on $U(\g^\vee)$ and the embedding $\mc B_{\g\cdot\chi}(\g^\vee)\subset U(\g^\vee)$. This is compatible with the isomorphisms chosen, i.e. we have a commutative diagram
 \begin{center}
     \begin{tikzcd}
\mc B_\chi(\g^\vee) \arrow[d, "g\cdot"] \arrow[r,two heads] & \mc B_{\chi'}(\g_\sigma^\vee) \arrow[d, "g'\cdot"] \\
\mc B_{g\cdot\chi}(\g^\vee) \arrow[r,two heads]             & \mc B_{g'\cdot\chi'}(\g_\sigma^\vee))   \makebox[0pt][l]{\,.}        
\end{tikzcd}
 \end{center}
 }
\end{remark}

\subsection{Construction of remaining isomorphisms}
Still, Theorem \ref{t:isosofgaudin} doesn't answer Conjecture \ref{conj:isoofbigalg}. However, we can construct the desired map using it.
\begin{proposition}
    For any $\sigma$-invariant dominant coweight $\lambda$ of $\g$ there exists a unique epimorphism $\mc B^\lambda(\g^\vee)_\sigma\twoheadrightarrow\mc B^\lambda(\g_\sigma^\vee)$ compatible with the isomorphism $\mc B(\g^\vee)_\sigma\simeq \mc B(\g_\sigma^\vee)$.
\end{proposition}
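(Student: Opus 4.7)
The plan is to construct the desired epimorphism as a factorization of a known surjection. Observe first that $\mc B^\lambda(\g^\vee)_\sigma$ is canonically isomorphic to the quotient of $\mc B(\g^\vee)_\sigma$ by the image of $I:=\ker(\mc B(\g^\vee)\twoheadrightarrow\mc B^\lambda(\g^\vee))$. Composing with the isomorphism $\mc B(\g^\vee)_\sigma\simeq \mc B(\g_\sigma^\vee)$ of Theorem~\ref{t:isosofgaudin}(1), the task reduces to showing that the composition
$$
\Phi\colon \mc B(\g^\vee)\twoheadrightarrow\mc B(\g^\vee)_\sigma\xrightarrow{\sim}\mc B(\g_\sigma^\vee)\twoheadrightarrow\mc B^\lambda(\g_\sigma^\vee)
$$
vanishes on $I$. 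Once this is settled, uniqueness follows from the surjectivity of $\mc B(\g^\vee)_\sigma\twoheadrightarrow\mc B^\lambda(\g^\vee)_\sigma$, and surjectivity of the resulting map from that of $\Phi$.

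To prove $\Phi(I)=0$ I will argue via generic evaluation. Fix $x\in I$ and denote $\bar x:=\Phi(x)\in \mc B^\lambda(\g_\sigma^\vee)\subset \operatorname{End}_\C W(\lambda)\otimes S(\g_\sigma^\vee)$. For every compatible pair $(\chi,\chi')$ provided by Corollary~\ref{cor:genericcomppair} (in particular with $\sigma(\chi)=\chi$), the lower two rows of Theorem~\ref{t:isosofgaudin}(2) assemble into a commutative square
\begin{center}
\begin{tikzcd}
\mc B_\chi(\g^\vee)_\sigma\arrow[r,"\sim"]\arrow[d,two heads] & \mc B_{\chi'}(\g_\sigma^\vee)\arrow[d,two heads]\\
\mc B_\chi^\lambda(\g^\vee)_\sigma\arrow[r,"\sim"] & \mc B_{\chi'}^\lambda(\g_\sigma^\vee)
\end{tikzcd}
\end{center}
whose bottom arrow is an isomorphism by the ``moreover'' clause of the theorem. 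Since $x$ vanishes in $\mc B^\lambda(\g^\vee)$, it vanishes in $\mc B_\chi^\lambda(\g^\vee)$, hence in $\mc B_\chi^\lambda(\g^\vee)_\sigma$; the bottom isomorphism then translates this into $(\mathrm{id}\otimes\mathrm{ev}_{\chi'})(\bar x)=0$. This vanishing holds for every $\chi'$ in the dense open subset of $(\g_\sigma^\vee)^*$ guaranteed by Corollary~\ref{cor:genericcomppair}.

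To conclude, I would view $\bar x$ as a polynomial map $(\g_\sigma^\vee)^*\to\operatorname{End}_\C W(\lambda)$ into a finite-dimensional vector space; vanishing on a Zariski-dense subset then forces $\bar x=0$. Consequently $\Phi(I)=0$, which produces the desired epimorphism and finishes the existence part of the proof.

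The main obstacle I anticipate lies in the middle step: the density argument requires that the bottom horizontal arrow of the square be a genuine isomorphism (not merely a surjection) for \emph{generic} $\chi$ that is \emph{simultaneously} $\sigma$-invariant. This dual requirement is exactly what makes Corollary~\ref{cor:genericcomppair} indispensable, and the corollary itself depends on the compact real form construction used to locate regular $\sigma$-invariant covectors, together with the Kostant-section identification of Proposition~\ref{p:quotspacesisosfordynkin} ensuring that compatible pairs $(\chi,\chi')$ really do exist in abundance. If either ingredient were unavailable, one would be forced to construct the map by a different route, for instance by working directly with monodromy-free opers at the scheme level before taking $\sigma$-invariants.
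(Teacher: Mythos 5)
Your proof is correct and takes essentially the same route as the paper: reduce to showing the composite map to $\mc B^\lambda(\g_\sigma^\vee)$ kills the kernel $I$, then detect vanishing via evaluation at the Zariski-dense set of compatible pairs $(\chi,\chi')$ supplied by Corollary~\ref{cor:genericcomppair}, using the ``moreover'' clause of Theorem~\ref{t:isosofgaudin}(2) to get a genuine isomorphism at the level of evaluated big algebras. Two tiny remarks: the isomorphism $\mc B(\g^\vee)_\sigma\simeq\mc B(\g_\sigma^\vee)$ lives in part~(2), not part~(1), of that theorem; and your closing worry is well placed --- the paper's bottom row is a span $\mc B_\chi^\lambda(\g^\vee)_\sigma \twoheadleftarrow\cdot\to\mc B_{\chi'}^\lambda(\g_\sigma^\vee)$, so without the isomorphism upgrade there would not even be a well-defined map through which to transport the vanishing.
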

\begin{proof}
    Take $\bar Q\in \mc B(\g^\vee)$ and let $Q$ and $Q'$ be its images in $\mc B^\lambda(\g^\vee)$ and $\mc B^\lambda(\g_\sigma^\vee)$, respectively. Our goal is to show that $Q=0$ implies $Q'=0$. The equality $Q=0$ implies that for every $\chi\in \h_\sigma$ we have $Q(\chi)=0$. Because of the existence of the isomorphism $\mc B_\chi^\lambda(\g^\vee)\twoheadrightarrow\mc B_\chi^\lambda(\g_\sigma^\vee)$ we also have $Q'(\chi')=0$ for any $\chi'$ forming a compatible pair with $\chi$, which on behalf of Corollary \ref{cor:genericcomppair} implies that $Q'=0$.
    \end{proof}

\begin{theorem}\label{t:solvetheconj}
    The map $\mc B^\lambda(\g^\vee)_\sigma\twoheadrightarrow\mc B^\lambda(\g_\sigma^\vee)$ is an isomorphism.
\end{theorem}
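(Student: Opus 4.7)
The plan is to show that the surjection $\phi\colon \mc B^\lambda(\g^\vee)_\sigma \twoheadrightarrow \mc B^\lambda(\g_\sigma^\vee)$ from the preceding proposition is injective. The strategy parallels that proposition but in the opposite direction: I would exploit that the fiber surjections $\phi_\chi\colon \mc B^\lambda_\chi(\g^\vee)_\sigma \to \mc B^\lambda_{\chi'}(\g_\sigma^\vee)$ are actually isomorphisms whenever $(\chi,\chi')$ is chosen as in Corollary \ref{cor:genericcomppair}, by the last assertion of Theorem \ref{t:isosofgaudin}(2).

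Concretely, I would take $\bar P \in \ker\phi$ and lift to a $\sigma$-invariant representative $P \in \mc B^\lambda(\g^\vee)^\sigma$, which is possible since $\sigma$ has finite order and the averaging operator provides a splitting of $\mc B^\lambda(\g^\vee)^\sigma \hookrightarrow \mc B^\lambda(\g^\vee) \twoheadrightarrow \mc B^\lambda(\g^\vee)_\sigma$. The commutative square
\[
\begin{tikzcd}
\mc B^\lambda(\g^\vee)_\sigma \arrow[r, "\phi"] \arrow[d] & \mc B^\lambda(\g_\sigma^\vee) \arrow[d] \\
\mc B^\lambda_\chi(\g^\vee)_\sigma \arrow[r, "\phi_\chi"] & \mc B^\lambda_{\chi'}(\g_\sigma^\vee)
\end{tikzcd}
\]
with vertical arrows given by evaluation at $\chi$ and $\chi'$ then forces $P(\chi) \in I_\chi := \ker(\mc B^\lambda_\chi(\g^\vee) \twoheadrightarrow \mc B^\lambda_\chi(\g^\vee)_\sigma)$ for every $\chi$ arising from a compatible pair as in Corollary \ref{cor:genericcomppair}.

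The main step is to upgrade this fiberwise vanishing to the global statement $P \in I := \ker(\mc B^\lambda(\g^\vee) \twoheadrightarrow \mc B^\lambda(\g^\vee)_\sigma)$. My preferred route is via opers: assembling the $\chi$-fiber identifications of Theorem \ref{t:isoswithopers} across $\h \git W$ yields a global isomorphism $\mc B^\lambda(\g^\vee) \simeq \mathrm{Fun}\,\mathrm{Op}_\g^{\mathrm{MF}}(\mathbb P^1)_{0,\infty}^{1,2,\lambda}$, and Theorem \ref{t:sigma-invariantopers} identifies the $\sigma$-fixed subscheme with $\mathrm{Op}_{\g_\sigma}^{\mathrm{MF}}(\mathbb P^1)_{0,\infty}^{1,2,\lambda}$ (preserving both the $1$-residue condition and monodromy-freeness). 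Combined with the tautological identity $\mathrm{Fun}(X)_\sigma \simeq \mathrm{Fun}(X^\sigma)$ for affine $X$, this immediately produces
\[
\mc B^\lambda(\g^\vee)_\sigma \simeq \mathrm{Fun}\big(\mathrm{Op}_\g^{\mathrm{MF}}(\mathbb P^1)_{0,\infty}^{1,2,\lambda}\big)_\sigma \simeq \mathrm{Fun}\big(\mathrm{Op}_{\g_\sigma}^{\mathrm{MF}}(\mathbb P^1)_{0,\infty}^{1,2,\lambda}\big) \simeq \mc B^\lambda(\g_\sigma^\vee),
\]
and one checks this composition agrees with $\phi$.

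The principal obstacle is justifying the global oper identification $\mc B^\lambda(\g^\vee) \simeq \mathrm{Fun}\,\mathrm{Op}_\g^{\mathrm{MF}}(\mathbb P^1)_{0,\infty}^{1,2,\lambda}$, since Theorem \ref{t:isoswithopers} only supplies the version after specializing the $2$-residue to $\pi(-\chi)$; one must verify that the Feigin--Frenkel--Reshetikhin isomorphisms of \cite{FFR} glue coherently in the family over $\h \git W$. Alternatively, one can avoid the global identification entirely by establishing torsion-freeness of $\mc B^\lambda(\g^\vee)_\sigma$ over the polynomial subalgebra $S(\h_\sigma)^{W^\sigma}$ coming from the $2$-residue via Proposition \ref{p:quotspacesisosfordynkin}, so that the generic-fiber isomorphism of Theorem \ref{t:isosofgaudin}(2) forces $\ker\phi = 0$ directly.
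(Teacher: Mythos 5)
Your ``preferred route'' via opers does not close, as you yourself observe: Theorem \ref{t:isoswithopers} only supplies the identification $\mc B^\lambda_\chi(\g^\vee)\simeq \mathrm{Fun}\,\mathrm{Op}_\g^{\mathrm{MF}}(\mathbb P^1)_{0,\infty,\pi(-\chi)}^{1,2,\lambda}$ after fixing the $2$-residue, and a global statement $\mc B^\lambda(\g^\vee)\simeq\mathrm{Fun}\,\mathrm{Op}_\g^{\mathrm{MF}}(\mathbb P^1)_{0,\infty}^{1,2,\lambda}$ is not established in the paper nor in the references it cites. The paper deliberately avoids this: there is no ``$\mc B^\lambda$-row'' in the diagram of Theorem \ref{t:isoswithopers}, precisely because gluing the Feigin--Frenkel--Reshetikhin isomorphisms into a family over $\h\git W$ is a nontrivial task that nobody has carried out. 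Until that gap is filled, the first route is not a proof.

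Your ``alternatively'' paragraph is, in spirit, the route the paper actually takes, but as stated it is far from complete and the word ``directly'' is doing more work than it can bear. Two substantive steps are missing. First, to even make sense of a ``generic-fiber isomorphism'' for $\phi$, you need to know that the fiber of $\mc B^\lambda(\g^\vee)_\sigma$ over $\chi'\in\spec R_\sigma$ really is $\mc B^\lambda_\chi(\g^\vee)_\sigma$; the natural base-change surjection $\mc B^\lambda(\g^\vee)_\sigma\otimes_{R_\sigma}\C_{\chi'}\twoheadrightarrow\mc B^\lambda_\chi(\g^\vee)_\sigma$ is not obviously injective. This is exactly Lemma \ref{l:basechangeofbig}, whose proof requires the Kostant-section embedding $\mc B^\lambda\hookrightarrow R\otimes\operatorname{End}V(\lambda)$ and a generic-freeness argument. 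Second, torsion-freeness of $\mc B^\lambda(\g^\vee)_\sigma$ over $R_\sigma$ is not automatic for a coinvariant module; the paper gets it from Lemma \ref{l:splitofmaps}, which splits $\mc B^\lambda(\g^\vee)\twoheadrightarrow\mc B^\lambda(\g^\vee)_\sigma$ by exhibiting $\mc B$ as a polynomial ring on $\sigma$-eigenvectors (Corollary \ref{cor:segalsugaweigenv}), so that $\mc B^\lambda(\g^\vee)_\sigma$ embeds into the torsion-free module $\mc B^\lambda(\g^\vee)$. Finally, even granting both lemmas, one must still convert ``fiber isomorphism on a dense set'' into injectivity; the paper does this via a split short exact sequence through a free submodule and a Cayley--Hamilton/Nakayama argument (one can alternatively compare generic ranks using torsion-freeness, which is cleaner, but either way some module-theoretic argument is required -- tensoring is only right exact, so the fiber of the kernel is not visibly zero). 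In short: the alternative is the right idea, but it is a one-sentence sketch of an argument that needs two auxiliary lemmas and a genuine commutative-algebra step that you have not supplied.
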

The proof will be given in Section \ref{s:transferatreps}.

\section{Concrete description of the isomorphism $\mathfrak z(\widehat{\g^\vee})_\sigma\simeq \mathfrak z(\widehat{\g_\sigma^\vee})$ and its derivatives}
In this section, we will give some results on the isomorphisms that have been obtained in the previous section. 
\subsection{The isomorphisms $Z(\g^\vee)_\sigma\simeq Z(\g_\sigma^\vee)$ and $S(\g^\vee)^{\g^\vee}_\sigma\simeq S(\g_\sigma^\vee)^{\g_\sigma^\vee}$}
We start with a recollection of the Harish-Chandra isomorphism. Let $\lambda$ be a dominant weight of $\g^\vee$ and $V(\lambda)$ be the corresponding highest weight representation. It is well-known that $Z(\g^\vee)$ acts on $V(\lambda)$ by scalars. Let 
$$
\chi_\lambda:Z(\g^\vee)\to \C
$$ 
be the algebra homomorphism such that every $P\in Z(\g^\vee)$ acts on $V(\lambda)$ by multiplication by $\chi_\lambda(P)$. Let $\rho^\vee$ be the Weyl vector of $\g^\vee$.

\begin{theorem}[Harish-Chandra isomorphism]
    Fix $P\in Z(\g^\vee)$. Then the map $\lambda+\rho^\vee\mapsto \chi_\lambda(P)$ extends to a $W$-invariant polynomial map $(\h^\vee)^*\to\C$. This defines a map 
    $$
    \gamma:Z(\g^\vee)\to S(\h^\vee).
    $$
    Then $\gamma$ is an isomorphism of $Z(\g^\vee)$ onto $S(\h^\vee)^W$.
\end{theorem}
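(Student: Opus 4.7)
The plan is to construct $\gamma$ via the Harish-Chandra projection using the PBW theorem, then establish the Weyl-invariance via the linkage principle and reduce the isomorphism statement to Chevalley's theorem on the level of associated graded algebras.

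First, fix the triangular decomposition $\g^\vee = \n^-\oplus \h^\vee\oplus\n^+$ coming from the choice of Borel. By PBW,
$$
U(\g^\vee) = U(\h^\vee)\oplus\left(\n^-U(\g^\vee)+U(\g^\vee)\n^+\right),
$$
and I would let $p\colon U(\g^\vee)\to U(\h^\vee)=S(\h^\vee)$ be the projection along this decomposition. A direct computation on a highest weight vector $v_\lambda\in V(\lambda)$ shows that $P\cdot v_\lambda = p(P)(\lambda)\,v_\lambda$ for $P\in Z(\g^\vee)$, so $\chi_\lambda(P)=p(P)(\lambda)$. In particular, the assignment $\lambda\mapsto\chi_\lambda(P)$ is automatically the restriction of the polynomial $p(P)\in S(\h^\vee)$ to weights, which already extends it to all of $(\h^\vee)^*$. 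Defining $\gamma(P)\in S(\h^\vee)$ by $\gamma(P)(\mu):=p(P)(\mu-\rho)$ accounts for the $\rho$-shift in the statement. The fact that $\gamma$ is an algebra homomorphism follows because on $Z(\g^\vee)$ the ``junk'' ideal $\n^-U(\g^\vee)+U(\g^\vee)\n^+$ behaves multiplicatively modulo itself (using that central elements commute past the $\n^\pm$ factors).

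Next, I would prove that $\gamma(P)\in S(\h^\vee)^W$. The key input is the linkage principle for Verma modules: for each simple root $\alpha_i$ and each dominant integral $\lambda$ with $\lambda(\alpha_i^\vee)\in\Z_{\ge 0}$, there is a nonzero homomorphism $M(s_i\cdot\lambda)\hookrightarrow M(\lambda)$, which forces the central characters to agree, that is $\gamma(P)(\lambda+\rho)=\gamma(P)(s_i(\lambda+\rho))$. Since dominant integral weights form a Zariski-dense subset of $(\h^\vee)^*$, the identity extends to all $\mu\in(\h^\vee)^*$ and to all $w\in W$ by iteration over simple reflections.

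Finally, for the isomorphism statement I would use a filtration argument. Equip $U(\g^\vee)$ with the PBW filtration, so that $\operatorname{gr} U(\g^\vee)\simeq S(\g^\vee)$; the projection $p$ is compatible with filtrations, and the associated graded of $\gamma$ is (up to the identification of top symbols) the restriction map
$$
S(\g^\vee)^{\g^\vee}\longrightarrow S(\h^\vee)^W,
$$
which is an isomorphism by Chevalley's theorem (Theorem \ref{t:Chevalley}). Injectivity and surjectivity then lift by a standard inductive argument on the filtration degree: any element in the kernel of $\gamma$ would have top symbol in the kernel of the Chevalley restriction, forcing it to be zero degree by degree; and for surjectivity, one lifts homogeneous $W$-invariants degree by degree, correcting the lower-order error at each step using the Chevalley isomorphism again. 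The main obstacle is the Weyl-invariance step, as the reduction to the associated graded is routine once the $\rho$-shifted $W$-action is known to preserve the image; everything else is bookkeeping with PBW.
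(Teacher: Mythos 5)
The paper does not actually prove this theorem: it recalls the Harish-Chandra isomorphism as a classical result and immediately uses it, so there is no in-paper proof to compare against. Your proposal is the standard textbook proof (PBW projection, linkage for $W$-invariance, filtration reduction to Chevalley). The overall architecture is sound, and the first two steps — the computation $\chi_\lambda(P)=p(P)(\lambda)$ and the $\rho$-shifted $W$-invariance via Verma embeddings $M(s_i\cdot\lambda)\hookrightarrow M(\lambda)$ plus Zariski density of dominant integral weights — are correct.

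There is, however, a genuine gap in the surjectivity step. You want to run the filtration argument by lifting a homogeneous $f\in S(\h^\vee)^W$ of degree $d$ to $Z(\g^\vee)$ degree by degree. Chevalley's theorem hands you $g\in S(\g^\vee)^{\g^\vee}$ restricting to $f$, but this lives in $S(\g^\vee)$, not in $U(\g^\vee)$; the argument requires knowing that $g$ arises as the top symbol of some element of $Z(\g^\vee)$, i.e.\ that $\operatorname{gr}Z(\g^\vee)=S(\g^\vee)^{\g^\vee}$ rather than merely $\operatorname{gr}Z(\g^\vee)\subseteq S(\g^\vee)^{\g^\vee}$. This is exactly where the symmetrization map enters: $\operatorname{sym}:S(\g^\vee)\to U(\g^\vee)$ is a filtered $\g^\vee$-module isomorphism with $\operatorname{gr}(\operatorname{sym})=\mathrm{id}$, so it restricts to a filtered linear isomorphism $S(\g^\vee)^{\g^\vee}\xrightarrow{\sim}Z(\g^\vee)$, giving the missing equality of associated graded spaces. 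Without this input your lifting step has nowhere to land. A related but more minor looseness is the claim that $p|_{Z}$ is multiplicative because the junk space "behaves multiplicatively modulo itself"; $\n^-U(\g^\vee)+U(\g^\vee)\n^+$ is not a two-sided ideal, so the clean argument is to pass to the centralizer $U^0$ of $\h^\vee$, where $U^0\cap U(\g^\vee)\n^+$ is a two-sided ideal complementary to $U(\h^\vee)$.
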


We now construct an isomorphism 
$$
\phi:Z(\g^\vee)_\sigma\simeq Z(\g_\sigma^\vee)
$$
using the commutative diagram
\begin{center}
    \begin{tikzcd}                                                                 
Z(\g^\vee) \arrow[r, two heads]\arrow[d, "\simeq"] & Z(\g^\vee)_\sigma \arrow[d, "\simeq"] \arrow[r, "\phi", dashed]& Z(\g_\sigma^\vee) \arrow[d, "\simeq"]                     \\
S(\h^\vee)^W \arrow[d, "\simeq"] \arrow[r, two heads]                                           & S(\h^\vee)_\sigma^W \arrow[d, "\simeq"]                         & S(\h_\sigma^\vee)^{W^\sigma} \arrow[d, "\simeq"]                                      \\
S(\h^*)^W \arrow[r, two heads]                                                                  & S(\h^*)_\sigma^W \arrow[r, "\simeq"]                            & S(\h_\sigma^*)^{W^\sigma}  \makebox[0pt][l]{\,,}                                   
\end{tikzcd}
\end{center}
where the vertical isomorphisms are Harish-Chandra isomorphisms or identifications $\h^\vee\simeq \h^*$. The bottom isomorphism is induced by the restriction map $\h^*\to \h_\sigma^*$ and is an isomorphism due to the identification
$$
\spec S(\h^*)^W_\sigma\simeq (\h\git W)^\sigma=\h_\sigma\git W^\sigma\simeq \spec S(\h_\sigma^*)^{W^\sigma}
$$
from Proposition \ref{p:quotspacesisosfordynkin}. This isomorphism $\phi$ will be called {\bfseries canonical}.

Let $\lambda$ be a $\sigma$-invariant dominant weight of $\g^\vee$ and $V(\lambda)$ and $W(\lambda)$ be the corresponding highest weight representations of $\g^\vee$ and $\g_\sigma^\vee$.
\begin{proposition}
    We have the following commutative diagram:

\begin{center}
    \begin{tikzcd}
                                                                                                & \mathrm{End}(V(\lambda))                                            &                                                                              & \mathrm{End}(W(\lambda)) \\
U(\g^\vee) \arrow[ru]                                                                           & \C \arrow[u, hook] \arrow[rr, no head, equal, bend left]   & U(\g_\sigma^\vee) \arrow[ru]                                                      & \C \arrow[u,hook]         \\
Z(\g^\vee) \arrow[ru, "\chi_\lambda"'] \arrow[r, two heads] \arrow[u, hook]  & Z(\g^\vee)_\sigma \arrow[r, "\phi"] \arrow[u, dashed] & Z(\g_\sigma^\vee) \arrow[ru, "\chi_\lambda"'] \arrow[u, hook]\fullstopbelow &                      
\end{tikzcd}
\end{center}

In other words, there is a factorization of $\chi_\lambda$ through $Z(\g^\vee)_\sigma$ and for any $P\in Z(\g^\vee)$, $P$ acts on $V(\lambda)$ by the same scalar as the one by which $\phi(P)$ acts on $W(\lambda)$. Moreover, this property uniquely characterizes $\phi$.
\end{proposition}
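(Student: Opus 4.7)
The plan is to establish the proposition in three pieces: (a) that $\chi_\lambda$ descends to $Z(\g^\vee)_\sigma$, (b) that this descent composes with $\phi$ to give $\chi_\lambda^{\g_\sigma^\vee}$, and (c) that these properties force $\phi$ uniquely.

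For (a), I would use the defining normalization of $\sigma$ on $V(\lambda)$: since $\sigma$ fixes the highest weight vector, the identity $\sigma(X\cdot v)=\sigma(X)\cdot\sigma(v)$ holds for all $X\in U(\g^\vee)$ and $v\in V(\lambda)$. For a central element $P$ acting by $\chi_\lambda(P)\cdot\mathrm{id}$, the computation
$$
\sigma(P)\cdot v=\sigma\bigl(P\cdot\sigma^{-1}(v)\bigr)=\chi_\lambda(P)\,v
$$
gives $\chi_\lambda(\sigma(P))=\chi_\lambda(P)$, so $\chi_\lambda$ annihilates the ideal $\lge x-\sigma(x)\rge$ and descends to the quotient.

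For (b), I would chase $P\in Z(\g^\vee)$ through the defining diagram of $\phi$. Under Harish-Chandra combined with $\h^\vee\simeq\h^*$, the element $P$ corresponds to a $W$-invariant polynomial $f_P$ on $\h$; tracing down and across the diagram, $\phi(\bar P)$ corresponds, via Harish-Chandra for $\g_\sigma^\vee$ and $\h_\sigma^\vee\simeq\h_\sigma^*$, to the restriction $f_P|_{\h_\sigma}$. Hence
$$
\chi_\lambda(P)=f_P(\lambda+\rho_{\g^\vee}),\qquad \chi_\lambda^{\g_\sigma^\vee}(\phi(\bar P))=f_P\bigl(\lambda+\rho_{\g_\sigma^\vee}\bigr),
$$
and the two scalars coincide provided that both evaluation points agree as elements of $\h_\sigma$. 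Since $\sigma(\lambda)=\lambda$ places $\lambda\in\h_\sigma$, this reduces to the identity $\rho_{\g^\vee}=\rho_{\g_\sigma^\vee}$ in $\h_\sigma$. Section \ref{sec:dynkinonopers} shows that $\rho^\vee_\g=\rho^\vee_{\g_\sigma}$, and Langlands duality identifies $\rho^\vee$ of a Lie algebra with $\rho$ of its dual, yielding the desired equality.

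For uniqueness (c), the last statement of Theorem \ref{t:dynkinfixeddatum} identifies $\sigma$-invariant dominant weights of $\g^\vee$ with dominant weights of $\g_\sigma^\vee$; consequently the set $\{\lambda+\rho_{\g_\sigma^\vee}\}$ is Zariski dense in $\h_\sigma$. If $\phi'$ is another algebra map satisfying the same factorization, then for every $\bar P$ the Harish-Chandra polynomial attached to $\phi(\bar P)-\phi'(\bar P)$ vanishes on this dense set, hence identically, so $\phi=\phi'$. The main obstacle is the Langlands-duality bookkeeping in step (b), in particular pinning down that the Weyl vector of $\g_\sigma^\vee$ coincides with that of $\g^\vee$ on the subspace $\h_\sigma$; once that identity is extracted from the computation already carried out in Section \ref{sec:dynkinonopers}, the remainder of the argument is a routine chain of Harish-Chandra and density considerations.
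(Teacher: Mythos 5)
Your proof follows essentially the same route as the paper: the same Harish–Chandra factorization, the same reduction to the identity $\rho_{\g^\vee}=\rho_{\g_\sigma^\vee}$ on $\h_\sigma$ (which the paper also pulls from the $\rho^\vee$ discussion in Section~\ref{sec:dynkinonopers}, leaving the Langlands-duality translation implicit where you make it explicit), and the same density-of-shifted-dominant-weights argument for uniqueness. The only cosmetic difference is that the paper proves $\chi_{\sigma(\mu)}(\sigma P)=\chi_\mu(P)$ for arbitrary $\mu$ and then specializes, while you work directly with $\sigma$-invariant $\lambda$; the content is the same.
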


\begin{proof}
We should explain the vertical dashed arrow and the commutativity of the rectangle including $\C$-s, $Z(\g^\vee)_\sigma$ and $Z(\g_\sigma^\vee)$. Note that for any $P\in Z(\g^\vee)$ and $\mu\in X(\g^\vee)$ we have
$$
\chi_{\sigma(\mu)}(\sigma P)\cdot \sigma(v)=\sigma(P)\cdot\sigma(v)=\sigma(P\cdot v)=\chi_\mu(P)\cdot \sigma(v)
$$
for any $v\in V(\mu)$. Therefore, 
$$
\chi_{\sigma(\mu)}(\sigma P)=\chi_\mu(P),
$$
which implies that for $\lambda\in X(\g_\sigma^\vee)=X(g^\vee)^\sigma$
$$
\chi_{\lambda}(P-\sigma P)=\chi_{\lambda}(P)-\chi_{\lambda}(\sigma P)=0,
$$
whence the factorization by the vertical dashed arrow.

The Weyl vector of $\g^\vee$ is fixed by $\sigma$ and coincides with the Weyl vector of $\g_\sigma^\vee$ (see Section~\ref{sec:dynkinonopers}), whence the following diagram: 
    \begin{center}
\begin{tikzcd}
                                                                             & \C \arrow[r, no head, equal]                            & \C &                                  & \gamma(P)(\lambda+\rho^\vee) \arrow[r, no head, equal] & \gamma(P)|_{\h_\sigma}(\lambda+\rho^\vee) \\
Z(\g^\vee)_\sigma \arrow[r] \arrow[d, "\gamma"'] \arrow[ru, "\chi_\lambda"] & Z(\g_\sigma^\vee) \arrow[d, "\gamma"'] \arrow[ru, "\chi_\lambda"] &    & P \arrow[r] \arrow[d] \arrow[ru] & \phi(P) \arrow[d] \arrow[ru]                      &                            \\
S(\h^\vee)_\sigma^W \arrow[r] \arrow[ruu]                   & S(\h_\sigma^\vee)^{W^\sigma} \arrow[ruu, "\mathrm{ev}_{\lambda+\rho^\vee}"']  &    & \gamma(P) \arrow[r] \arrow[ruu]  & \gamma(P)|_{\h_\sigma}\makebox[0pt][l]{\,.} \arrow[ruu]                     &                           
\end{tikzcd}
    \end{center}
The commutativity then follows.

Note that, due to the Harish-Chandra isomorphism, an element $\phi(P)$ is uniquely determined by actions of $\chi_\lambda$ on it for various $\lambda$. Therefore, the last statement follows.
\end{proof}

We can define an isomorphism $S(\g^\vee)^{\g^\vee}_\sigma\simeq S(\g_\sigma^\vee)^{\g_\sigma^\vee}$ in a similar way using the diagram

\begin{center}
    \begin{tikzcd}     
S(\g^\vee)_\sigma^{\g^\vee} \arrow[d, "\simeq"] \arrow[r, dashed]                         & S(\g_\sigma^\vee)^{\g_\sigma^\vee} \arrow[d, "\simeq"]                                      \\
S(\h^\vee)_\sigma^W \arrow[d, "\simeq"]                          & S(\h_\sigma^\vee)^{W^\sigma} \arrow[d, "\simeq"]                                      \\
   S(\h^*)_\sigma^W \arrow[r, "\simeq"]                            & S(\h_\sigma^*)^{W^\sigma}  \makebox[0pt][l]{\,.}                                   
\end{tikzcd}
\end{center}

We call this isomorphism {\bfseries canonical} too.
We then obtain a result similar to the previous one:
\begin{proposition}\label{p:canfors}
    The canonical isomorphism $\phi:S(\g^\vee)^{\g^\vee}_\sigma\simeq S(\g_\sigma^\vee)^{\g_\sigma^\vee}$ is uniquely determined by the property that for any compatible pair $(\chi,\chi')$ with $\sigma(\chi)=\chi$
    $$
    \phi(P)(\chi')=P(\chi),\qquad P\in S(\g^\vee)^{\g^\vee}.
    $$
\end{proposition}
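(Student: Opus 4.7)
The plan is to unravel the chain of isomorphisms defining $\phi$ and identify each link with a geometric operation at the level of categorical quotients, after which the identity $\phi(P)(\chi')=P(\chi)$ becomes automatic. First I would recall that Chevalley restriction identifies $S(\g^\vee)^{\g^\vee}$ with $\C[\h\git W]$ and $S(\g_\sigma^\vee)^{\g_\sigma^\vee}$ with $\C[\h_\sigma\git W^\sigma]$ in such a way that evaluation $P\mapsto P(\chi)$ becomes evaluation of the Chevalley image $\bar P$ at $\pi(\chi)$, and analogously for $\chi'$ with $\pi':(\g_\sigma^\vee)^*\to \h_\sigma\git W^\sigma$.

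Next I would trace the defining diagram of $\phi$: applying $\operatorname{Spec}$ to its bottom arrow $S(\h^*)^W_\sigma\xrightarrow{\sim}S(\h_\sigma^*)^{W^\sigma}$ and invoking Proposition \ref{p:quotspacesisosfordynkin}, this arrow is dual to the closed inclusion $\h_\sigma\git W^\sigma=(\h\git W)^\sigma\hookrightarrow \h\git W$. The entire composition defining $\phi$ therefore sends $\bar P\in\C[\h\git W]$ to its restriction $\bar P|_{\h_\sigma\git W^\sigma}$. The claimed identity now drops out: for a compatible pair $(\chi,\chi')$ one has $\pi(\chi)=\pi'(\chi')$ in $\h_\sigma\git W^\sigma$, hence
\[
\phi(P)(\chi')=\overline{\phi(P)}(\pi'(\chi'))=\bar P|_{\h_\sigma\git W^\sigma}(\pi'(\chi'))=\bar P(\pi(\chi))=P(\chi).
\]
The condition $\sigma(\chi)=\chi$ plays no role in the existence part and enters only through uniqueness.

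For uniqueness, if $\phi_1$ and $\phi_2$ both satisfy the stated property then $\phi_1(P)-\phi_2(P)\in S(\g_\sigma^\vee)^{\g_\sigma^\vee}$ vanishes at every $\chi'\in(\g_\sigma^\vee)^*$ that fits into a compatible pair with some $\sigma$-invariant $\chi$; I would then show that every $\chi'$ qualifies. Since $((\g^\vee)^*)^\sigma$ is naturally $((\g^\vee)_\sigma)^*$ and $(G^\vee)_\sigma$ is reductive, the categorical quotient map $((\g^\vee)^*)^\sigma\to(\h\git W)^\sigma$ is surjective, so any $\pi'(\chi')\in\h_\sigma\git W^\sigma$ lifts to a $\sigma$-invariant $\chi\in(\g^\vee)^*$. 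Thus $\phi_1(P)-\phi_2(P)$ vanishes on all of $(\g_\sigma^\vee)^*$ and is zero. The only real obstacle is the bookkeeping of the various identifications of $\h$, $\h^*$, $\h^\vee$, $\h_\sigma$, $\h_\sigma^*$, $\h_\sigma^\vee$ under Langlands duality and $W^\sigma$-invariant metrics; however, all the delicate content is already packaged into Proposition \ref{p:quotspacesisosfordynkin}, and the rest is standard Chevalley theory.
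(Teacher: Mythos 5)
Your proposal is correct and is essentially the paper's proof made explicit: the paper simply writes ``Follows directly from the construction,'' and you have unwound the defining diagram, identified its bottom arrow with restriction along $\h_\sigma\git W^\sigma=(\h\git W)^\sigma\hookrightarrow\h\git W$ (Proposition~\ref{p:quotspacesisosfordynkin}), and read off the asserted identity together with uniqueness via surjectivity onto $(\h\git W)^\sigma$. Two small remarks: the hypothesis $\sigma(\chi)=\chi$ is not entirely dispensable in the existence half, since it is exactly what makes $P\mapsto P(\chi)$ descend to $S(\g^\vee)^{\g^\vee}_\sigma$ so that the left-hand side $\phi(P)(\chi')$ is well defined on a coset of $P$; and your surjectivity claim for $((\g^\vee)^*)^\sigma\to(\h\git W)^\sigma$ is valid, but reading it as ``the categorical quotient map for $(G^\vee)_\sigma$'' implicitly uses the folding identification $W^\sigma\simeq W((\g^\vee)_\sigma)$ --- a cleaner route, already available in the paper, is to note that the Kostant section $p_{-1}+\g_\sigma^{p_1}\subset((\g^\vee)^*)^\sigma$ maps onto all of $(\h\git W)^\sigma$ (as in Proposition~\ref{p:quotspacesisosfordynkin}), or simply to restrict to $\h_\sigma\twoheadrightarrow\h_\sigma\git W^\sigma$; alternatively, Lemma~\ref{l:densecomppairs} already supplies a Zariski-dense set of admissible $\chi'$, which suffices.
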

\begin{proof}
    Follows directly from the construction.
\end{proof}
\subsection{Other isomorphisms and their compatibility}
\begin{proposition}\label{p:ffcentercompatibility}
    \begin{enumerate}
        \item[(i)] The composition $\mathfrak z(\widehat{\g^\vee})\to \mathfrak z(\widehat{\g^\vee})_\sigma\to\mathfrak z(\widehat{\g_\sigma^\vee})$ is compatible with gradings and commutes with the derivation $\tau=-\partial_t$.
        \item[(ii)] Let $S_1,\ldots,S_l\in\mathfrak z(\widehat{\g^\vee})$ be a complete set of Segal-Sugawara vectors which are all eigenvectors of $\sigma$. Let $S_1,\ldots,S_{l'}$ be the $\sigma$-invariant ones and $S_1',\ldots, S_{l'}'$ be their images in $\mathfrak z(\widehat{\g_\sigma^\vee})$. Then $S_1',\ldots, S_{l'}'$ is a complete set of Segal-Sugawara vectors in $\mathfrak z(\widehat{\g_\sigma^\vee})$.
    \end{enumerate}
\end{proposition}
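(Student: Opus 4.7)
The proof of part (1) is essentially formal. The grading on $\mathfrak z(\widehat{\g^\vee}) \subseteq U(\widehat{\g^\vee}_-)$ and the derivation $\tau = -\partial_t$ both come from derivations that act only on the $\C\jet t$-factor, and hence commute with $\sigma$ and descend to the coinvariants $\mathfrak z(\widehat{\g^\vee})_\sigma$. Under the canonical identification $\op_\g(D) \simeq V_{\mathrm{can}}(\g)\jet t$ of Section \ref{s:canrepr}, $\tau$ and the grading correspond to the obvious $\mathrm{Der}\,\C\jet t$-action, and the inclusion $V_{\mathrm{can}}(\g_\sigma)\jet t \hookrightarrow V_{\mathrm{can}}(\g)\jet t$ visibly commutes with this action. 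Passing to functions and combining with Theorem \ref{t:isosofgaudin}, the composition $\mathfrak z(\widehat{\g^\vee}) \to \mathfrak z(\widehat{\g^\vee})_\sigma \to \mathfrak z(\widehat{\g_\sigma^\vee})$ commutes with $\tau$ and preserves the grading.

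For part (2), the argument is purely algebraic once (1) is in hand. Since $\tau$ commutes with $\sigma$, each $\tau^k S_i$ is a $\sigma$-eigenvector with the same eigenvalue $\zeta_i$ as $S_i$. By the Feigin-Frenkel theorem, $\mathfrak z(\widehat{\g^\vee}) = \C[\tau^k S_i : 1 \le i \le l,\ k \ge 0]$ is a polynomial algebra on which $\sigma$ acts diagonally by the weights $\zeta_i$. I will use the following general fact: for any polynomial algebra $R = \C[y_j]_{j\in J}$ equipped with a diagonal finite-order action $\sigma(y_j) = \zeta_j y_j$, the coinvariant ideal $\langle x - \sigma(x) : x \in R \rangle$ coincides with the ideal $(\{y_j : \zeta_j \ne 1\})$, so $R_\sigma \simeq \C[y_j : \zeta_j = 1]$. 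This is proved by decomposing an arbitrary element into $\sigma$-isotypic components and observing that any non-trivially weighted monomial must contain some factor $y_j$ with $\zeta_j \ne 1$. Applying this yields
$$\mathfrak z(\widehat{\g^\vee})_\sigma \simeq \C[\tau^k S_i : 1 \le i \le l',\ k \ge 0].$$

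Combining this with Theorem \ref{t:isosofgaudin} and part (1), the canonical isomorphism $\mathfrak z(\widehat{\g^\vee})_\sigma \xrightarrow{\sim} \mathfrak z(\widehat{\g_\sigma^\vee})$ is $\tau$-equivariant and graded, and sends $\tau^k S_i$ to $\tau^k S_i'$. Thus $\mathfrak z(\widehat{\g_\sigma^\vee})$ is freely generated as a polynomial algebra by the $\tau^k S_i'$ for $1 \le i \le l'$ and $k \ge 0$; the $S_i'$ are homogeneous as images of homogeneous elements under a graded map, and are nonzero (otherwise the image under the isomorphism would have too few generators in the relevant degrees). This is precisely the definition of a complete set of Segal-Sugawara vectors.

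The only genuine content lies in part (1). Although Theorem \ref{t:isosofgaudin} relies on a substantial chain of geometric identifications, every step involved (the Feigin-Frenkel isomorphism, the oper identification of Theorem \ref{t:isoswithopers}, and the inclusion $\op_{\g_\sigma}(D) \hookrightarrow \op_{\g}(D)^\sigma$ of Theorem \ref{t:sigma-invariantopers}) is manifestly $\mathrm{Der}\,\C\jet t$-equivariant by construction, so the desired compatibility is automatic.
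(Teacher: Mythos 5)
Your argument is correct and follows essentially the same route as the paper's own proof: part (1) follows because every arrow in the chain linking $\mathfrak z(\widehat{\g^\vee})$ to $\mathfrak z(\widehat{\g_\sigma^\vee})$ via opers is $\mathrm{Der}\,\C\jet t$-equivariant, and part (2) then follows by identifying $\mathfrak z(\widehat{\g^\vee})_\sigma$ with the polynomial subalgebra generated by the $\tau^k S_i$ with $\sigma$-invariant $S_i$. The only difference is that you spell out as a separate general lemma the computation of coinvariants of a polynomial algebra under a diagonal finite-order action, which the paper treats as immediate.
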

\begin{proof}
    According to \cite[Proposition 5.3, Theorem 5.4, Theorem 5.7]{FFT} and our construction of maps between opers, all the maps in 

\begin{center}
{\small
    \begin{tikzcd}
\mathfrak z_u(\widehat{\g^\vee})_\sigma                      & \mathrm{Fun}\left(\mathrm{Op}_\g(D_u)\right)^\sigma \arrow[l, "\sim"']\arrow[r, "\sim"]                                           & {\mathrm{Fun}\,\mathrm{Op}_{\g_\sigma}(D_u)}  \arrow[r, "\sim"]                                        & \mathfrak z_u(\widehat{\g_\sigma^\vee}) \makebox[0pt][l]{\,.}     
\end{tikzcd}
}
\end{center}
are compatible with the actions of $\mathrm{Aut}\,\C\jet t$ and $\mathrm{Der}\,\C\jet t$. In particular, they are compatible with derivations $\tau=-\partial_t$ and $-t\partial_t$. The second one defines the gradings, so $(i)$ follows.

For $(ii)$, since 
$$
    \mathfrak z(\widehat{\g^\vee})_\sigma\simeq \C[\tau^kS_i:1\le i\le l, k\le 0]_\sigma\simeq \C[\tau^kS_i:1\le i\le l', k\le 0],
$$
we get from $(i)$ that 
$$
\mathfrak z(\widehat{\g_\sigma^\vee})\simeq \C[\tau^kS'_i:1\le i\le l', k\le 0],
$$
hence $S_1',\ldots,S_{l'}'$ is indeed a complete set of Segal-Sugawara vectors in $\mathfrak z(\widehat{\g_\sigma^\vee})$.
\end{proof}

\begin{lemma}\label{l:freepolyovercenters}
    The algebra $\mc B(\g^\vee)$ is a polynomial algebra over $Z(\g^\vee)$ and $S(\g^\vee)^{\g^\vee}$. Moreover, its algebraically independent generators can be chosen to be eigenvectors of $\sigma$.
\end{lemma}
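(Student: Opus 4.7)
The plan is to combine the free polynomial generation of $\mc B(\g^\vee)$ from Proposition \ref{prop:genpropofgaudin}(iii) with the existence of $\sigma$-eigenvector Segal-Sugawara vectors (Corollary \ref{cor:segalsugaweigenv}) and to locate $Z(\g^\vee)\otimes 1$ and $1\otimes S(\g^\vee)^{\g^\vee}$ inside $\mc B(\g^\vee)$ using the oper description of Theorem \ref{t:isoswithopers}.

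I would first address the $\sigma$-eigenvector assertion. By Corollary \ref{cor:segalsugaweigenv} one can pick a complete set of Segal-Sugawara vectors $S_1,\dots,S_l\in\mathfrak z(\widehat{\g^\vee})$ all of which are $\sigma$-eigenvectors. The derivation $\tau=-\partial_t$ acts only on the $t$-variable, hence commutes with the action of $\sigma$ on $\g^\vee$-coefficients, so every $\tau^k S_i$ is still a $\sigma$-eigenvector. Since $\Psi_{0;z,\infty}$ is $\sigma$-equivariant by Theorem \ref{t:actionsonffcenter}, the images $T_i^{(k)}:=\Psi_{0;z,\infty}(\tau^k S_i)\in \mc B(\g^\vee)$ for $i=1,\dots,l$ and $k=0,\dots,\deg S_i$ are $\sigma$-eigenvectors, and by Proposition \ref{prop:genpropofgaudin}(iii) they form a system of free polynomial generators of $\mc B(\g^\vee)$.

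For the "polynomial over $Z(\g^\vee)$ and over $S(\g^\vee)^{\g^\vee}$" statements, the plan is to pass to $\mc B(\g^\vee)\simeq\operatorname{Fun}\op_\g(\mathbb P^1)_{0,\infty}^{1,2}$ and use the canonical representatives of Section \ref{s:canrepr}, which present the right-hand side as an affine space of dimension $l+\sum_i d_i$ parametrised by $V_{\mathrm{can}}$-valued Laurent polynomials $\mathbf v(t)$ with the prescribed orders of pole at $0$ and $\infty$. The two $\sigma$-equivariant residue maps $\operatorname{Res}^1_0$ and $\operatorname{Res}^2_\infty$ to $\h\git W$ become, in these coordinates, projections onto the top and bottom Laurent coefficients of $\mathbf v(t)$, yielding two $\sigma$-equivariant trivialisations
\[
\op_\g(\mathbb P^1)_{0,\infty}^{1,2}\simeq (\h\git W)\times \mathbb A^{\sum_i d_i}.
\]
Pulling back along the $1$-residue, Harish-Chandra (applied with the shift $\lambda\mapsto\pi(-\lambda-\rho)$ already appearing in Theorem \ref{t:isoswithopers}) identifies $\operatorname{Fun}(\h\git W)$ with $Z(\g^\vee)$, embedded as $Z(\g^\vee)\otimes 1\subset U(\g^\vee)\otimes S(\g^\vee)$ since the datum at $0$ only involves the $U(\g^\vee)$-factor; pulling back along the $2$-residue, Chevalley identifies $\operatorname{Fun}(\h\git W)$ with $S(\g^\vee)^{\g^\vee}$, embedded as $1\otimes S(\g^\vee)^{\g^\vee}$. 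Each trivialisation then exhibits $\mc B(\g^\vee)$ as a polynomial algebra over the corresponding subalgebra, and the $\sum_i d_i$ fibre generators can be picked as $\sigma$-eigenvectors: $\sigma$ preserves the grading on $\mc B(\g^\vee)$ as well as on $Z(\g^\vee)$ and $S(\g^\vee)^{\g^\vee}$, and since $\sigma$ has finite order it acts semisimply on each graded piece.

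The main obstacle I expect is the careful verification that, in the canonical coordinates on $\op_\g(\mathbb P^1)_{0,\infty}^{1,2}$, the maps $\operatorname{Res}^1_0$ and $\operatorname{Res}^2_\infty$ really do become the coordinate projections onto the "extremal" Laurent coefficients of $\mathbf v(t)$, so that the two affine fibrations split trivially and the pullbacks above really coincide with $Z(\g^\vee)\otimes 1$ and $1\otimes S(\g^\vee)^{\g^\vee}$ respectively; granting this, the two polynomial-over structures follow by construction, and the $\sigma$-equivariant choice of generators reduces to the elementary fact that semisimple actions on graded polynomial rings always admit eigenvector generating sets.
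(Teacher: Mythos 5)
Your $\sigma$-eigenvector observation in the first paragraph is essentially identical to the paper's, and it is correct. For the assertion that $\mc B(\g^\vee)$ is polynomial over $Z(\g^\vee)$ and over $S(\g^\vee)^{\g^\vee}$, however, you take a genuinely different, more geometric route than the paper. The paper stays entirely on the algebraic side: starting from the generators $\Psi(\tau^k S_i)$, it invokes the elementary linear-algebra fact that the span of $\{\Psi(\tau^k S_i):0\le k\le \deg S_i\}$ equals the span of the coefficients of $\Psi(S_i)$ viewed as a polynomial in $z$ (since $\Psi$ intertwines $\tau$ with $\pm\partial_z$), and then simply reads off from Proposition \ref{prop:genpropofgaudin}(i) that the top-degree coefficient lies in $U(\g^\vee)\otimes 1$ and is a generator of $Z(\g^\vee)$, while the degree-$0$ coefficient lies in $1\otimes S(\g^\vee)$ and is a generator of $S(\g^\vee)^{\g^\vee}$. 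This change of basis produces, in one stroke, an algebraically independent generating set of $\mc B(\g^\vee)$ that visibly contains generating sets of both $Z(\g^\vee)$ and $S(\g^\vee)^{\g^\vee}$, and the $\sigma$-eigenvector refinement is automatic since the basis change is triangular within each $\sigma$-isotypic block.

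Your oper-theoretic route, by contrast, needs the claim you yourself flag as the main obstacle: that $\operatorname{Res}^1_0$ and $\operatorname{Res}^2_\infty$ express $\op_\g(\mathbb P^1)_{0,\infty}^{1,2}$ as a trivial product over $\h\git W$ with affine-space fibres, and moreover that the pullbacks of $\operatorname{Fun}(\h\git W)$ under these two maps are literally $Z(\g^\vee)\otimes 1$ and $1\otimes S(\g^\vee)^{\g^\vee}$ inside $U(\g^\vee)\otimes S(\g^\vee)$. Neither point is immediate: after a gauge transformation by $\rho^\vee(t)$ to bring the canonical form $\nabla^0+p_{-1}+\mathbf v(t)$ into the singular local normal form at $0$, the components of $\mathbf v$ of different $\mathrm{ad}\,\rho^\vee$-degrees get rescaled by different powers of $t$, so identifying which Laurent coefficient the residue picks out requires a careful degree-by-degree bookkeeping; and matching the result against the Harish-Chandra and Chevalley identifications inside $U(\g^\vee)\otimes S(\g^\vee)$ (including the $\rho$-shift at $0$) is exactly the content that \cite[Theorem 5.8]{FFT} and \cite[Theorem A]{FFR} supply but which you would need to make explicit at the level of coordinates. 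Until that is done, the "two trivial fibrations" statement is a conjecture rather than a proof step, so as written your argument has a gap that the paper's more elementary coefficient argument avoids entirely.
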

\begin{proof}
    For this, we will use a general fact from linear algebra that whenever we have a polynomial $f$ in one variable $t^{-1}$ with coefficients in a vector space over $\C$, then the linear span of its coefficients equals the span of its values at $\deg f+1$ pairwise distinct values of $t$ and equals the span of values of $\partial_t^kf$ at a nonzero point and $\deg f+1$ values of $k$.
    
    Choose a complete set of Segal-Sugawara vectors $S_1,\ldots,S_l$. According to Proposition \ref{prop:genpropofgaudin}(iii), the images of elements $\tau^k S_i$ in $\mc B$, where $k=0\ldots \deg S_i$, form an algebraically independent subset of a set of generators of $\mc B$. We can also see from the description in Proposition \ref{prop:genpropofgaudin}(i) that, if we treat $z$ as a variable, then the map $\Psi$ will send $\tau$ to $\partial_z$. Therefore, applying the fact mentioned above, we see that the linear span of 
    $$
    \lge\tau^k S_i:k=0\ldots \deg S_i\rge
    $$
    equals the linear span of the coefficients of the image of $S_k$, considered as a polynomial in $z$. Choose this set of generators for each $i$. In particular, there will be generators of $S(\g^\vee)^{\g^\vee}$ in this set, being the coefficients at degree $0$, and generators of $Z(\g^\vee)$ being the coefficients at maximal degrees, as desired. Moreover, if the set $S_1,\ldots, S_l$ is chosen in such a way that $S_i$ are eigenvectors of $\sigma$ (which is always possible by Corollary \ref{cor:segalsugaweigenv}), then the generators of $\mc B(\g^\vee)$ constructed will be eigenvectors of $\sigma$.
\end{proof}
\begin{remark}
{\rm
    The argument above repeats the proof of \cite[Lemma 1]{R1}.
}
\end{remark}
\begin{theorem}\label{t:bigalgebrasiso}
    The isomorphism $\mc B(\g^\vee)_\sigma\simeq \mc B(\g_\sigma^\vee)$ has the following properties:
    \begin{itemize}
        \item[(i)] It preserves the gradings with respect to the polynomial part.
        \item[(ii)] It restricts to isomorphisms of subalgebras $Z(\g^\vee)_\sigma\simeq  Z(\g_\sigma^\vee)$ and $S(\g^\vee)^{\g^\vee}_\sigma\simeq S(\g_\sigma^\vee)^{\g_\sigma^\vee}$ which are the canonical ones.
    \end{itemize}
\end{theorem}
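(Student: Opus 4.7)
The plan is to prove both parts by following invariant structures through the chain
$$
\mc B(\g^\vee)_\sigma \xleftarrow{\sim} \mathrm{Fun}\bigl(\op_\g(\mathbb P^1)_{0,\infty}^{1,2}\bigr)^\sigma \xrightarrow{\sim} \mathrm{Fun}\,\op_{\g_\sigma}(\mathbb P^1)_{0,\infty}^{1,2} \xrightarrow{\sim} \mc B(\g_\sigma^\vee)
$$
furnished by Theorem \ref{t:isosofgaudin}(2).

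For part $(i)$, I would argue that the grading on $\mc B(\g^\vee)$ coming from the polynomial factor corresponds, under the identification with $\mathrm{Fun}\,\op_\g(\mathbb P^1)_{0,\infty}^{1,2}$, to the grading induced by the $\C^\times$-action rescaling the local coordinate $t$. Since the Dynkin automorphism acts on opers only through the coefficients and is insensitive to the $\mathbb P^1$-coordinate, it commutes with this $\C^\times$-action. Consequently the central horizontal identification is graded, and $(i)$ follows. This is the same mechanism as in the proposition preceding Lemma \ref{l:freepolyovercenters}, which records compatibility of the oper-theoretic isomorphisms with the $\mathrm{Der}\,\C\jet t$-action and, in particular, with $-t\partial_t$.

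For part $(ii)$, I would first establish subalgebra preservation. Using Corollary \ref{cor:segalsugaweigenv} one can pick a complete set $S_1,\ldots,S_l$ of Segal-Sugawara vectors in $\mathfrak z(\widehat{\g^\vee})$ consisting of $\sigma$-eigenvectors; the $\sigma$-invariant ones descend to a complete set of Segal-Sugawara vectors in $\mathfrak z(\widehat{\g_\sigma^\vee})$ by the proposition just referenced. Expanding $\Psi_{0;z,\infty}(S_i)$ as a polynomial in $z$ and invoking the explicit formula of Proposition \ref{prop:genpropofgaudin}$(i)$, one sees that the coefficient at $z^0$ lies in $1\otimes S(\g^\vee)^{\g^\vee}$ and the top-degree coefficient lies in $Z(\g^\vee)\otimes 1$, as in the proof of Lemma \ref{l:freepolyovercenters}. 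The identical construction performed on the $\g_\sigma^\vee$-side yields generators of $S(\g_\sigma^\vee)^{\g_\sigma^\vee}$ and $Z(\g_\sigma^\vee)$ inside $\mc B(\g_\sigma^\vee)$; since the horizontal isomorphism maps each $\sigma$-invariant $S_i$ to its $\g_\sigma^\vee$-counterpart, it sends $Z(\g^\vee)_\sigma$ and $S(\g^\vee)^{\g^\vee}_\sigma$ onto $Z(\g_\sigma^\vee)$ and $S(\g_\sigma^\vee)^{\g_\sigma^\vee}$ respectively.

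There remains the question of canonicity of these restrictions. For $S(\g^\vee)^{\g^\vee}_\sigma\simeq S(\g_\sigma^\vee)^{\g_\sigma^\vee}$, this is a direct application of Proposition \ref{p:canfors}: one needs $\phi(P)(\chi')=P(\chi)$ for every compatible pair $(\chi,\chi')$ with $\sigma(\chi)=\chi$, which is exactly the commutativity of the rectangle of evaluations at $\chi$ and $\chi'$ in the second diagram of Theorem \ref{t:isosofgaudin}. For $Z(\g^\vee)_\sigma\simeq Z(\g_\sigma^\vee)$ canonicity is characterised, by the first proposition of this section, by matching central characters on $\sigma$-invariant highest weight modules. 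I expect this to be the main obstacle, as it demands a compatibility between the oper identification and the representation theory of both Langlands duals at $0$ that goes beyond the algebra isomorphism itself. The natural route is to combine the diagram of Theorem \ref{t:isoswithopers} relating $Z(\widehat{\g^\vee})$ with $\mathrm{Fun}\,\op_\g(D^\times)$ with the oper-theoretic description of $\chi_\lambda$ as evaluation at the oper on $D$ attached to $\lambda+\rho^\vee$; then Theorem \ref{t:sigma-invariantopers} identifies this $\sigma$-fixed oper with the $\g_\sigma$-oper attached to the same weight (the Weyl vectors matching by Section \ref{sec:dynkinonopers}), and the canonicity of the restriction falls out.
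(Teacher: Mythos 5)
Your proposal pursues a genuinely different, more oper-theoretic route than the paper, and while the overall direction is sound, there are two gaps worth flagging.

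For part (i), the paper never invokes a $\C^\times$-action on the space of opers. Instead it works entirely with the evaluation diagrams of Theorem \ref{t:isosofgaudin}(2): picking a compatible pair $(\chi,\chi')$ with $P(\chi)\ne 0$, it observes that $(t\chi,t\chi')$ is again a compatible pair, and from $P(t\chi)=t^dP(\chi)$ reads off $P'(t\chi')=t^dP'(\chi')$ on a dense set, so $P'$ is homogeneous of degree $d$. Your route via rescaling the coordinate on $\mathbb P^1$ can in principle be made to work, but the reference you lean on is off target: the proposition preceding Lemma \ref{l:freepolyovercenters} records compatibility of the $\mathfrak z_u$-level isomorphism with $\mathrm{Der}\,\C\jet t$ and with the grading on $\mathfrak z_u$ by $-t\partial_t$, which is \emph{not} the grading on $\mc B(\g^\vee)$ by the polynomial factor. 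Indeed $\Psi_{0;z,\infty}(X[r])=z^rX\otimes 1+\delta_{r,-1}\otimes X$ sends the $\mathfrak z$-homogeneous element $X[-1]$ to a sum of terms of $S(\g^\vee)$-degrees $0$ and $1$, so the $\mathfrak z$-grading does not descend naively to the $S$-grading, and one would have to separately verify that the oper identification $\mc B(\g^\vee)\simeq \mathrm{Fun}\,\op_\g(\mathbb P^1)_{0,\infty}^{1,2}$ intertwines the $S$-grading with the global $\C^\times$-action fixing $0,\infty$. The paper simply avoids this by evaluating.

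For part (ii), your subalgebra-preservation argument via the $\sigma$-eigenvector Segal--Sugawara generators of Lemma \ref{l:freepolyovercenters} is a valid alternative to the paper's (the paper instead characterises $S(\g^\vee)^{\g^\vee}$ as the elements that become constant after evaluation at generic $\chi$), and your argument for canonicity of $S(\g^\vee)^{\g^\vee}_\sigma\simeq S(\g_\sigma^\vee)^{\g_\sigma^\vee}$ via Proposition \ref{p:canfors} coincides with the paper's. The one point you correctly identify as an obstacle --- canonicity of $Z(\g^\vee)_\sigma\simeq Z(\g_\sigma^\vee)$ --- you propose to handle by matching the $\sigma$-fixed $1$-residue at $0$ with the oper on $D$ attached to $\lambda+\rho^\vee$; this would require establishing that, under the identification $\mc B(\g^\vee)\simeq\mathrm{Fun}\,\op_\g(\mathbb P^1)_{0,\infty}^{1,2}$, the subalgebra $Z(\g^\vee)\otimes 1$ is cut out by the $1$-residue map at $0$ composed with the Harish-Chandra isomorphism, a compatibility the paper does not state and which is more than "falls out." The paper instead gives a short elementary argument: for generic $\chi$ the algebra $\mc B_\chi^\lambda(\g^\vee)$ is the full algebra of diagonal matrices on $V(\lambda)$, so its center is the scalars, and since the algebra isomorphism $\mc B_\chi^\lambda(\g^\vee)_\sigma\simeq\mc B_{\chi'}^\lambda(\g_\sigma^\vee)$ sends identity to identity, the scalar by which a central $P$ acts on $V(\lambda)$ equals the scalar by which $\phi(P)$ acts on $W(\lambda)$; canonicity then follows from the characterisation of $\phi$ by central characters. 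If you want to keep your route, the missing link is precisely the identification of $Z(\g^\vee)\subset\mc B(\g^\vee)$ with the pullback of $\mathrm{Fun}(\h\git W)$ along $\mathrm{Res}^1_0$, together with the fact that this matches Harish-Chandra up to the $\rho$-shift.
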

\begin{proof}
    We begin with $(i)$. Take a homogeneous element $P\in\mc B(\g^\vee)$ of degree $d$ and let $P'$ be its image in $\mc B(\g_\sigma^\vee)$. We want to find an element $\chi\in (\g_\sigma^\vee)^*$ such that $P(\chi)\ne 0$. If there is no such element, then $P=0$ on $(\g^\vee)_\sigma$, implying that $P'=0$ due to the commutative diagram
\begin{center}
    \begin{tikzcd}
\mc B(\g^\vee)_\sigma \arrow[r, "\simeq"] \arrow[d, two heads] & \mc B(\g_\sigma^\vee) \arrow[d, two heads] \\
\mc B_\chi(\g^\vee)_\sigma \arrow[r, "\simeq"]                 & \mc B_{\chi'}(\g_\sigma^\vee)            
\end{tikzcd}
\end{center}
for compatible pairs $(\chi,\chi')$. Thus, we may assume that we have found such an element $\chi$, hence a dense open subset of elements $\chi'\in (\g_\sigma^\vee)^*$ such that $P'(\chi)\ne 0$ and they form compatible pairs with $\sigma$-invariant elements $\chi\in (\g^\vee)^*$ such that $P(\chi)\ne 0$. It follows from the same commutative diagram that $P(\chi)$ is mapped to $P'(\chi')$. Then for any $t\in \C$ the element $P(t\chi)=t^dP(\chi)$ is mapped to $P'(t\chi')$, which implies that $P'(t\chi')=t^dP(\chi')$ for $\chi'$ from an open dense subset of $(\g_\sigma^\vee)^*$. Therefore, $P'$ is also homogeneous of degree $d$, as desired.

 We now switch to $(ii)$. Applying the above commutative diagram, we easily see that the isomorphism $\mc B(\g^\vee)_\sigma\simeq \mc B(\g_\sigma^\vee)$ restricts to the isomorphism of subrings $S(\g^\vee)^{\g^\vee}_\sigma\simeq S(\g_\sigma^\vee)^{\g_\sigma^\vee}$, as both these subrings can be characterized as the sets of elements which get constant after being evaluated at a generic element $\chi$. Moreover, it follows from Proposition \ref{p:canfors} that this isomorphism is canonical.

 For the remaining part of $(ii)$, we have $Z(\g^\vee)\subset \mc B(\g^\vee)$ and due to the fact that $\mc B(\g^\vee)$ is a free polynomial algebra over $Z(\g^\vee)$ with generators being $\sigma$-eigenvectors (Lemma \ref{l:freepolyovercenters}), there is an induced inclusion $Z(\g^\vee)_\sigma\subset \mc B(\g^\vee)_\sigma$. Since $Z(\g^\vee)$ is the zero-degree part of $\mc B(\g^\vee)$, we have maps

\begin{center}
\begin{tikzcd}
Z(\g^\vee)_\sigma \arrow[r, hook] \arrow[rd, hook] \arrow[rrr, dotted, bend left] & \mc B(\g^\vee)_\sigma \arrow[r, "\simeq"] \arrow[d, two heads] & \mc B(\g_\sigma^\vee) \arrow[d, two heads] & Z(\g_\sigma^\vee) \arrow[l, hook] \arrow[ld, hook] \\
                                                                                       & \mc B_\chi(\g^\vee)_\sigma \arrow[r, "\simeq"]                 & \mc B_{\chi'}(\g_\sigma^\vee)\commabelow                 &                                              
\end{tikzcd}
\end{center}
hence it is enough to restrict to $\mc B_{\chi'}(\g^\vee)$ for generic $\chi'$. Note that an element $P\in U(\g^\vee)$ lies in the center if and only if it acts diagonally on every representation. Since the algebra $\mc B_{\chi}^\lambda(\g^\vee)$ coincides with the diagonal matrices in $V(\lambda)$ for generic $\chi$, hence its center is the one-dimensional space spanned by the identity. As the identity goes to the identity, it follows that $P$ acts on $V(\lambda)$ by the same scalar as the image $\bar P$ of $P$ in $U(\g_\sigma^\vee)$ acts on $W(\lambda)$, which implies that we get the canonical map $Z(\g^\vee)_\sigma\xrightarrow{\sim} Z(\g_\sigma^\vee)$.
\end{proof}

Let us state one more compatibility that our isomorphisms satisfy. As described in \cite[Section 6.5]{Mo}, define a map 
\begin{align*}
\rho: U(\widehat{\g^\vee_-})&\to U(\g^\vee)\otimes \C[z^{-1}]\qquad X[r]\mapsto z^r X,\\
\rho_z: U(\widehat{\g^\vee_-})&\to U(\g^\vee)\qquad\qquad\qquad X[r]\mapsto z^r X.
\end{align*}

According to \cite[Proposition 6.5.2]{Mo}, the vectors $S_1,\ldots,S_{l'}$ are mapped to algebraically independent generators $P_1,\ldots,P_{l'}$ of $Z(\g^\vee)_\sigma$ and the vectors $S_1',\ldots,S_{l'}'$ are mapped to algebraically independent generators $P'_1,\ldots,P'_{l'}$ of $Z(\g_\sigma^\vee)$. Then we can define algebra isomorphisms between $Z(\g^\vee)_\sigma$ and $Z(\g_\sigma^\vee)$ by sending $P_i$ to $P'_i$ for each $z$, which due to the compatibility of $\rho$ with $\partial_t$ and $\partial_z$ will glue to a commutative diagram

\begin{center}
    \begin{tikzcd}
\mathfrak z(\widehat{\g^\vee})_\sigma \arrow[d, "\rho"] \arrow[r, "\sim"] & \mathfrak z(\widehat{\g_\sigma^\vee}) \arrow[d, "\rho"] \\
{Z(\g^\vee)_\sigma[z^{-1}]} \arrow[r, "\sim"]                             & {Z(\g_\sigma^\vee)[z^{-1}]}\makebox[0pt][l]{\,.}                             
\end{tikzcd}
\end{center}
\begin{proposition}
    The bottom map in this commutative diagram is induced by the canonical isomorphism $Z(\g^\vee)_\sigma\xrightarrow{\sim} Z(\g_\sigma^\vee)$.
\end{proposition}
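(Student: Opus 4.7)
The plan is to factor $\rho$ through the universal big algebra $\mc B(\g^\vee)$, so that the claim reduces to the identification of the restriction of $\mc B(\g^\vee)_\sigma\simeq\mc B(\g_\sigma^\vee)$ to $Z$ already carried out in Theorem \ref{t:bigalgebrasiso}(ii). From Proposition \ref{prop:genpropofgaudin}(i), $\Psi_{0;z,\infty}(X[r]) = z^r X\otimes 1 + \delta_{r,-1}\otimes X$, and post-composing with the augmentation $1\otimes\mathrm{ev}_0\colon U(\g^\vee)\otimes S(\g^\vee)\to U(\g^\vee)$ annihilates the second summand. Thus $\rho_z = (1\otimes\mathrm{ev}_0)\circ\Psi_{0;z,\infty}$, and $\g^\vee$-equivariance of both factors places the image inside $U(\g^\vee)^{\g^\vee} = Z(\g^\vee)$, giving the factorization
\[
\mathfrak z(\widehat{\g^\vee})\xrightarrow{\Psi_{0;z,\infty}}\mc B(\g^\vee)\xrightarrow{1\otimes\mathrm{ev}_0}\mc B_0(\g^\vee)\subseteq Z(\g^\vee),
\]
and similarly for $\g_\sigma^\vee$.

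The pair $(0,0)$ is $\sigma$-invariant and compatible (both sides project to $0\in\mathfrak h\git W$), and inspection of Theorem \ref{t:isosofgaudin}(2) shows that its middle row is an isomorphism for every such pair; the extra genericity hypothesis is needed only for the $\mc B^\lambda$ row below it. Specializing at $(0,0)$ and pasting onto Theorem \ref{t:isosofgaudin}(1),(2) produces, for each fixed $z\in\C^\times$, the commutative diagram
\begin{center}
\begin{tikzcd}
\mathfrak z(\widehat{\g^\vee})_\sigma \arrow[r, "\sim"] \arrow[d, "\Psi_{0;z,\infty}"'] & \mathfrak z(\widehat{\g_\sigma^\vee}) \arrow[d, "\Psi_{0;z,\infty}"] \\
\mc B(\g^\vee)_\sigma \arrow[r, "\sim"] \arrow[d, "1\otimes\mathrm{ev}_0"'] & \mc B(\g_\sigma^\vee) \arrow[d, "1\otimes\mathrm{ev}_0"] \\
\mc B_0(\g^\vee)_\sigma \arrow[r, "\sim"] & \mc B_0(\g_\sigma^\vee)
\end{tikzcd}
\end{center}
whose columns realize $\rho_z$ landing in $Z(\g^\vee)_\sigma$ and $Z(\g_\sigma^\vee)$ respectively. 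By Theorem \ref{t:bigalgebrasiso}(ii), the middle horizontal isomorphism restricts along $Z\hookrightarrow\mc B$ to the canonical map, so the bottom arrow is the restriction of the canonical isomorphism.

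On the generators provided by \cite[Proposition 6.5.2]{Mo}, one has $\rho_z(S_i) = z^{-d_i}Q_i$ and $\rho_z(S_i') = z^{-d_i}Q_i'$, so the bottom arrow takes $z^{-d_i}Q_i$ to $z^{-d_i}Q_i'$; cancelling the nonzero scalar $z^{-d_i}$, this is the map $Q_i\mapsto Q_i'$, which is precisely the canonical action on these generators. Since this identification is $z$-independent, the gluing over $z$ (available through the $\partial_z$-compatibility noted before the proposition) yields exactly the $\C[z^{-1}]$-linear extension of the canonical isomorphism $Z(\g^\vee)_\sigma\simeq Z(\g_\sigma^\vee)$, as required. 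The only real technicality is to confirm that $(0,0)$ is an admissible pair for the middle row of Theorem \ref{t:isosofgaudin}(2), which is immediate from the statement of that theorem.
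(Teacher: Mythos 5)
Your overall strategy---factor $\rho_z$ as $(1\otimes\mathrm{ev}_0)\circ\Psi_{0;z,\infty}$ through $\mc B(\g^\vee)$, observe that $\mathrm{ev}_0$ lands in $Z(\g^\vee)$, and then read off the bottom arrow from Theorem~\ref{t:bigalgebrasiso}(ii)---is the same as the paper's. The weak point is how you obtain commutativity of the bottom square of your three-row diagram, which is the step that actually transfers the identification from $\mc B$ down to $Z$.

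You argue that $(0,0)$ is a compatible $\sigma$-invariant pair and then invoke the middle row of Theorem~\ref{t:isosofgaudin}(2). But $\chi=0$ is not regular, let alone generic, and the isomorphism $\mc B_\chi(\g^\vee)\simeq\mathrm{Fun}\,\mathrm{Op}_\g(\mathbb P^1)^{1,2}_{0,\infty,\pi(-\chi)}$ that feeds into that diagram comes from Theorem~\ref{t:isoswithopers}, which is explicitly stated for generic $\chi$. So the statement you are leaning on is not established for the pair $(0,0)$, and ``inspection shows the middle row is an isomorphism for every compatible pair'' is precisely the claim that would require a separate argument. The paper sidesteps this by instead using Theorem~\ref{t:bigalgebrasiso}(i): the isomorphism $\Psi\colon\mc B(\g^\vee)_\sigma\xrightarrow{\sim}\mc B(\g_\sigma^\vee)$ preserves the polynomial grading, and $1\otimes\mathrm{ev}_0$ is exactly the projection onto the degree-zero piece, so $\mathrm{ev}_0\circ\Psi=\Psi|_{Z}\circ\mathrm{ev}_0$ automatically. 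Combined with Theorem~\ref{t:bigalgebrasiso}(ii), this gives the canonical identification without ever specializing the evaluation parameter to a non-generic point. Replacing your appeal to Theorem~\ref{t:isosofgaudin}(2) at $(0,0)$ by this grading argument closes the gap; the rest of your write-up, including the final $z$-independence check, then goes through.
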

\begin{proof}
    First of all, since Segal-Sugawara vectors $S_i$ are homogeneous and the morphisms are compatible with the grading, then the bottom map is indeed induced by an isomorphism $Z(\g^\vee)_\sigma\xrightarrow{\sim} Z(\g_\sigma^\vee)$.

    Note that the inclusion $Z(\g^\vee)\subset \mc B(\g^\vee)$ splits and there is a natural splitting map given by evaluation of the polynomial part at $0$. Thus, we have a commutative diagram

\begin{center}
    \begin{tikzcd}
\mathfrak z(\widehat{\g^\vee}) \arrow[r, "{\rho_{z,\infty}}"] \arrow[rd, "\rho_z"] & \mc B(\g^\vee) \arrow[d]          \\
                                                                         & Z(\g^\vee)\simeq \mc B_0(\g^\vee)\makebox[0pt][l]{\,,} 
\end{tikzcd}
\end{center}
hence the conclusion follows from Theorem \ref{t:bigalgebrasiso}(ii). 
\end{proof}

\section{Transfer at the level of representations}\label{s:transferatreps}
In this section, let $\lambda$ be a $\sigma$-invariant dominant coweight of $G$ and $\chi$ be a $\sigma$-invariant element of $\g^*$. We also denote the irreducible $G^\vee$-representation with the highest weight $\lambda$ by $V(\lambda)$ and the irreducible $G_\sigma^\vee$-representation with the highest weight $\lambda$ by $W(\lambda)$. We will consider the universal big algebra $\mc B$, its evaluation $\mc B^\lambda$ at the representation of highest weight $\lambda$, its evaluation $\mc B_\chi$ at $\chi$ and both evaluations $\mc B_\chi^\lambda$. 

This section is mainly devoted to the proof of Theorem \ref{t:solvetheconj}, solving Conjecture \ref{conj:isoofbigalg}. We will need several steps to achieve the goal.

\begin{lemma}\label{l:splitofmaps}
    The maps $\mc B\to\mc B_\sigma$, $\mc B^\lambda\to\mc B_\sigma^\lambda$, $\mc B_\chi\to\mc (B_\chi)_\sigma$, $\mc B^\lambda_\chi\to\mc (B^\lambda_\chi)_\sigma$ are split and the splittings commute with the natural surjections between these algebras.
\end{lemma}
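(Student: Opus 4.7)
The plan is to construct the splittings by averaging over $\sigma$. Because $\sigma$ is a Dynkin automorphism, it has finite order $n\in\{2,3\}$, so on any $\C$-vector space with a $\sigma$-action the operator
$$
\pi_\sigma := \frac{1}{n}\sum_{i=0}^{n-1}\sigma^i
$$
is defined and is an idempotent projecting onto the $\sigma$-invariants.

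The first step is the following abstract observation: for any commutative $\C$-algebra $A$ equipped with a finite-order automorphism $\sigma$, the composition
$$
A^\sigma \hookrightarrow A \twoheadrightarrow A_\sigma
$$
is an isomorphism of $\C$-algebras. Here I note first that the $\C$-linear span $L$ of $\{x-\sigma(x):x\in A\}$ is already an ideal, by virtue of the identity $a(b-\sigma(b)) = (ab-\sigma(ab)) - (a-\sigma(a))\sigma(b)$, so $A_\sigma = A/L$. Then $\pi_\sigma$ annihilates every $x-\sigma(x)$, so $L\subseteq \ker\pi_\sigma$; conversely if $\pi_\sigma(y)=0$, then $y = y-\pi_\sigma(y) = \tfrac{1}{n}\sum_{i=1}^{n-1}(y-\sigma^i(y))$, and each $y-\sigma^i(y)$ is a telescoping sum of elements $\sigma^j(y)-\sigma^{j+1}(y)\in L$. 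Hence $\ker\pi_\sigma = L$, which identifies $A^\sigma$ with $A_\sigma$.

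The second step is to define the splittings as
$$
s \;:=\; \big(A^\sigma\hookrightarrow A\big)\circ\big(A_\sigma\xrightarrow{\sim} A^\sigma\big),
$$
equivalently $s = \pi_\sigma\circ q^{-1}$ where $q:A^\sigma\xrightarrow{\sim}A_\sigma$ is the isomorphism above. Applied to $A = \mc B(\g^\vee)$, $\mc B^\lambda(\g^\vee)$, $\mc B_\chi(\g^\vee)$, $\mc B_\chi^\lambda(\g^\vee)$, this produces the required splittings of $\mc B\to\mc B_\sigma$, $\mc B^\lambda\to(\mc B^\lambda)_\sigma$, $\mc B_\chi\to(\mc B_\chi)_\sigma$, $\mc B_\chi^\lambda\to(\mc B_\chi^\lambda)_\sigma$ respectively.

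The third step, compatibility with the surjections $\mc B\twoheadrightarrow\mc B^\lambda\twoheadrightarrow \mc B^\lambda_\chi$ and $\mc B\twoheadrightarrow\mc B_\chi\twoheadrightarrow\mc B^\lambda_\chi$, is immediate from $\sigma$-equivariance: since $\lambda$ and $\chi$ are taken to be $\sigma$-invariant, each of these surjections is $\sigma$-equivariant, hence intertwines the averaging operators $\pi_\sigma$, hence intertwines the inclusions of invariants, hence intertwines the splittings $s$. I don't expect any real obstacle here: averaging in characteristic $0$ over a finite cyclic group is standard, and the only subtlety is verifying that the linear span of $\{x-\sigma(x)\}$ coincides with the ideal generated by these elements, which is handled by the commutativity identity above.
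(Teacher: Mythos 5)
Your first step contains a genuine error that sinks the argument. The claim that for any commutative $\C$-algebra $A$ with a finite-order automorphism $\sigma$ the composition $A^\sigma\hookrightarrow A\twoheadrightarrow A_\sigma$ is an isomorphism is false. Take $A=\C[x]$ and $\sigma(x)=-x$ (order $2$). Then $A^\sigma=\C[x^2]$, while the ideal generated by $\{p-\sigma(p)\}$ contains $x-\sigma(x)=2x$, hence equals $\langle x\rangle$, so $A_\sigma=\C[x]/\langle x\rangle=\C$. The composition $\C[x^2]\to\C$ sends $x^2\mapsto 0$ and is very far from injective.

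The error lies in the assertion that the linear span $L$ of $\{x-\sigma(x):x\in A\}$ is already an ideal. The identity you invoke, $a(b-\sigma(b))=(ab-\sigma(ab))-(a-\sigma(a))\sigma(b)$, is correct as an identity but does not establish the claim: the term $(a-\sigma(a))\sigma(b)$ is itself an element of $L$ multiplied by an element of $A$, which is precisely the kind of product whose membership in $L$ you are trying to prove. The argument is circular. In the example above, $L$ is the space of odd polynomials, which is not an ideal (since $x\cdot x=x^2\notin L$), and the ideal it generates is strictly larger. Consequently $\ker\pi_\sigma=L$ is the kernel of the averaging projection, but $A_\sigma=A/(\text{ideal generated by }L)$, and these two quotients differ. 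The phenomenon is unavoidable: whenever a variable $y$ satisfies $\sigma(y)=\zeta y$ with $\zeta\ne1$ a root of unity, $y$ dies in $A_\sigma$ but $y^{\operatorname{ord}\zeta}$ survives in $A^\sigma$.

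The correct route, which is the one the paper takes, relies on the special structure of $\mc B$: by Corollary \ref{cor:segalsugaweigenv} and Proposition \ref{prop:genpropofgaudin}(iii) one can write $\mc B=\C[x_1,\ldots,x_m]$ as a free polynomial algebra on $\sigma$-eigenvectors, with $x_1,\ldots,x_s$ the $\sigma$-invariant ones. The coinvariant ideal is then exactly $\langle x_{s+1},\ldots,x_m\rangle$, so $\mc B_\sigma=\C[x_1,\ldots,x_s]$, and the splitting is the tautological inclusion $\C[x_1,\ldots,x_s]\hookrightarrow\C[x_1,\ldots,x_m]$. This is a strictly smaller subalgebra than $\mc B^\sigma$ in general. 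The remaining cases are then handled by observing they are base changes of $\mc B\to\mc B_\sigma$ along $\sigma$-equivariant surjections, so the splittings descend compatibly.

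Your third step (compatibility via $\sigma$-equivariance of the surjections) is sound in spirit and survives once the splitting is defined correctly, but the construction of the splitting itself is the crux of the lemma, and that is where your argument breaks.
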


\begin{proof}
    As all these maps are obtained as base changes of the map $\mc B\to\mc B_\sigma$, it is enough only to consider this case. It follows from Proposition \ref{cor:segalsugaweigenv} and Proposition \ref{prop:genpropofgaudin}(iii) that $\mc B$ is a polynomial algebra in variables being $\sigma$-eigenvectors. Write $\mc B=\C[x_1,\ldots,x_s,x_{s+1},\ldots,x_m]$, where $x_1,\ldots,x_s$ are the $\sigma$-invariant variables. Then $\mc B_\sigma=\C[x_1,\ldots,x_s]$, making the splitting obvious.
\end{proof}

Let $\C[X]$ denote the ring of regular functions on an algebraic variety $X$. Identifying $\g$ with $\g^*$ via an invariant nondegenerate bilinear form on $\g$ (e.g. the Killing form), we consider the restriction map to the Kostant section
$$
S(\g)\otimes \operatorname{End}V(\lambda)\simeq \C[\g]\otimes \operatorname{End}V(\lambda)\to \C[p_{-1}+\g^{p_1}]\otimes \operatorname{End}V(\lambda).
$$
\begin{lemma}\label{l:restrofinvtokostant}
    The induced map
    $$
    r:(S(\g)\otimes \operatorname{End}V(\lambda))^G\to \C[p_{-1}+\g^{p_1}]\otimes \operatorname{End}V(\lambda)
    $$
    is injective and commutes with evaluation at elements $\chi\in p_{-1}+\g^{p_1}$.
\end{lemma}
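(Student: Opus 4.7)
The plan is to identify $(S(\g)\otimes \operatorname{End}V(\lambda))^G$ with the space of $G$-equivariant polynomial maps $f\colon \g\to \operatorname{End}V(\lambda)$, where $G$ acts on $\g$ by the adjoint action and on $\operatorname{End}V(\lambda)$ by conjugation via $\pi_\lambda$. Under this identification, the map $r$ is simply restriction of such a map to the affine subspace $p_{-1}+\g^{p_1}\subset \g$, and evaluation at a point $\chi\in p_{-1}+\g^{p_1}$ factors through $r$ tautologically, since $r$ keeps the values at points of the Kostant section unchanged. So the compatibility with evaluation is immediate.

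For injectivity, the key input is Kostant's theorem, already invoked in Proposition \ref{p:quotspacesisosfordynkin}: the Kostant section $p_{-1}+\g^{p_1}$ meets every regular adjoint $G$-orbit in exactly one point, hence
\[
G\cdot (p_{-1}+\g^{p_1}) \;=\; \g^{\mathrm{reg}},
\]
which is a Zariski dense open subset of $\g$. Suppose $f\in (S(\g)\otimes \operatorname{End}V(\lambda))^G$ satisfies $r(f)=0$. Then $f(y)=0$ for every $y\in p_{-1}+\g^{p_1}$. By $G$-equivariance,
\[
f(\mathrm{Ad}_g\, y) \;=\; \pi_\lambda(g)\, f(y)\, \pi_\lambda(g)^{-1} \;=\; 0
\]
for all $g\in G$ and $y\in p_{-1}+\g^{p_1}$, so $f$ vanishes on $\g^{\mathrm{reg}}$. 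Since $f$ is polynomial and $\g^{\mathrm{reg}}$ is Zariski dense in $\g$, we conclude $f=0$.

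No serious obstacle is expected here: the proof is a direct application of Kostant's section theorem together with the elementary observation that a polynomial vanishing on a dense set is zero. The only point that requires care is the identification of $(S(\g)\otimes \operatorname{End}V(\lambda))^G$ with equivariant polynomial maps $\g\to\operatorname{End}V(\lambda)$ — standard, but one must remember to use the invariant form identification $\g\simeq\g^*$ that the statement presupposes so that restriction to a subvariety of $\g$ makes sense.
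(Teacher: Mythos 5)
Your proof is correct and is essentially the paper's own argument, just phrased in the language of $G$-equivariant polynomial maps rather than explicitly unravelling $g\cdot A = A$ for $A=\sum_i P_i\otimes a_i$; both rest on the same two facts — that $r$ is literally restriction to the Kostant section, and that $G\cdot(p_{-1}+\g^{p_1})=\g^{\mathrm{reg}}$ is Zariski dense.
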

\begin{proof}
Take $A:=\sum_i P_i\otimes a_i\in (S(\g)\otimes \operatorname{End}V(\lambda))^G$. By definition, for any $\chi\in p_{-1}+\g^{p_1}$ we have $A(\chi)=r(A)(\chi)$, so the second part of the lemma is clear. Assume that $r(A)=0$. Then $A(\chi)=0$ for any $\chi\in p_{-1}+\g^{p_1}$. Take $g\in G$. Unravelling $g\cdot A=A$, we get
\begin{align*}
0=A(\chi)&=\sum_i P_i(\chi) a_i=\sum_i (g\cdot P_i)(\chi) (g\cdot a_i)\\
&=\sum_i P_i(g^{-1}\chi) (g\cdot a_i)=g\cdot A(g^{-1}\chi),
\end{align*}
which implies that $A(\chi)=0$ for any $\chi\in G\cdot (p_{-1}+\g^{p_1})$. As pointed out in the proof of Proposition \ref{p:quotspacesisosfordynkin}, each regular $G$-orbit intersects the Kostant section, hence $A(\chi)=0$ for $\chi$ from a Zariski-dense subset of $\g$, which implies that $A=0$.
\end{proof}

For simplicity, denote $S(\g^\vee)^{\g^\vee}$ and $S(\g_\sigma^\vee)^{\g_\sigma^\vee}$ by $R$ and $R_\sigma$, respectively. For $\chi\in(\g^\vee)^*$, we denote by $\C_\chi$ the $R$-module $\C$, on which each element $P\in R$ acts by multiplication by $P(\chi)$.
\begin{lemma}\label{l:basechangeofbig}
    Choose regular $\chi\in (\g^\vee)^*$. Then the canonical surjection
    \begin{align*}
        \mc B\otimes_R\C_\chi&\twoheadrightarrow\mc B_\chi
    \end{align*}
   is an isomorphism. If $\chi$ is generic regular and such that 
    \begin{equation}\label{eq:dimsagree}
        \dim \mc B^\lambda_\chi=\dim V(\lambda),
    \end{equation}
    then the surjection
    $$
    \mc B^\lambda\otimes_R\C_\chi\twoheadrightarrow\mc B^\lambda_\chi
    $$
    is an isomorphism. Moreover, if $(\chi,\chi')$ is a compatible pair with $\chi$ as above and $\sigma$-invariant, then the surjection
    $$
\mc B^\lambda_\sigma\otimes_{R}\C_\chi\simeq \mc B^\lambda_\sigma\otimes_{R_\sigma}\C_{\chi'}\twoheadrightarrow\mc (B^\lambda_\chi)_\sigma
    $$
    is also an isomorphism.
\end{lemma}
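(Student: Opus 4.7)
My plan is to handle the three claims in order, each building on the previous.

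For the first isomorphism $\mc B\otimes_R\C_\chi\simeq\mc B_\chi$, I would exploit the polynomial structure. By Lemma \ref{l:freepolyovercenters}, $\mc B(\g^\vee)$ is a polynomial algebra over $R$ in $\dim\b$ generators, so $\mc B\otimes_R\C_\chi$ is a polynomial $\C$-algebra of Krull dimension $\dim\b$. By Theorem \ref{t:freegensofbigalg}, $\mc B_\chi$ is likewise polynomial of Krull dimension $\dim\b$ for regular $\chi$. A surjection between polynomial $\C$-algebras of equal Krull dimension must be an isomorphism: a nonzero prime kernel would strictly lower the Krull dimension of the quotient, contradicting equality.

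For the second isomorphism $\mc B^\lambda\otimes_R\C_\chi\simeq\mc B^\lambda_\chi$, the plan is a dimension count via generic flatness. By Lemma \ref{l:restrofinvtokostant} combined with Kostant's identification $R\simeq\C[p_{-1}+\g^{p_1}]$ (applied to $\g^\vee$), the algebra $\mc B^\lambda$ embeds as an $R$-submodule of the free $R$-module $R\otimes\operatorname{End}V(\lambda)$, so it is finitely generated over $R$. Generic flatness then provides a dense open subset of $\spec R$ on which $\mc B^\lambda$ is locally free; for $\chi$ there, $\dim_\C\mc B^\lambda\otimes_R\C_\chi$ equals the generic rank $N:=\dim_K\mc B^\lambda\otimes_R K$, with $K=\operatorname{Frac}R$. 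The surjection onto $\mc B^\lambda_\chi$ and the hypothesis $\dim\mc B^\lambda_\chi=\dim V(\lambda)$ give $N\ge n:=\dim V(\lambda)$. For the reverse inequality, $\mc B^\lambda\otimes_R K$ is a commutative $K$-subalgebra of $M_n(K)=K\otimes\operatorname{End}V(\lambda)$. A lift to $\mc B^\lambda$ of a simple-spectrum element of $\mc B^\lambda_\chi$ has nonzero characteristic-polynomial discriminant at $\chi$, a Zariski-open condition in $\spec R$, and so its image in $M_n(K)$ is a regular element with commutant of $K$-dimension exactly $n$. Commutativity forces $\mc B^\lambda\otimes_R K$ into that commutant, giving $N\le n$. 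Hence $N=n$, and the surjection between $\C$-vector spaces of equal dimension is an isomorphism.

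For the third isomorphism I would first note that compatibility of $(\chi,\chi')$ with $\sigma(\chi)=\chi$ implies the evaluation $\operatorname{ev}_\chi\colon R\to\C$ factors through the canonical surjection $R\twoheadrightarrow R_\sigma$ coming from Proposition \ref{p:quotspacesisosfordynkin} (in characteristic zero, $R^\sigma$, $R/(P-\sigma P)$, and $R_\sigma$ are canonically identified) as $\operatorname{ev}_{\chi'}\circ(R\twoheadrightarrow R_\sigma)$. Since $\mc B^\lambda_\sigma$ is naturally an $R_\sigma$-module (the $R$-action on $\mc B^\lambda$ descends because elements $P-\sigma P$ act trivially after passing to $\sigma$-coinvariants), this gives $\mc B^\lambda_\sigma\otimes_R\C_\chi\simeq\mc B^\lambda_\sigma\otimes_{R_\sigma}\C_{\chi'}$. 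Finally, $\sigma$-coinvariants commute with the right-exact functor $-\otimes_R\C_\chi$; applying $(-)_\sigma$ to the isomorphism $\mc B^\lambda\otimes_R\C_\chi\simeq\mc B^\lambda_\chi$ from the second step yields $\mc B^\lambda_\sigma\otimes_R\C_\chi\simeq(\mc B^\lambda_\chi)_\sigma$.

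The main obstacle is the upper bound $N\le n$ in the second claim: one must carefully pass from a simple-spectrum element of $\mc B^\lambda_\chi$ to a regular element of $M_n(K)$ whose commutant bounds the generic rank of $\mc B^\lambda$ over $R$. The other two claims reduce to polynomial-algebra comparisons and formal manipulations of coinvariants with tensor products.
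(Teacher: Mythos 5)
Your proof is correct, and it arrives at the same conclusion via a mostly parallel but not identical route. For the first isomorphism, the paper chooses explicit algebraically independent generators of $\mc B$ extending generators of $R$ (Lemma~\ref{l:freepolyovercenters}) and observes that the complementary generators map to algebraically independent generators of $\mc B_\chi$; your Krull-dimension argument is a slightly more abstract rephrasing of the same count. For the third isomorphism, the paper's one-line "obtained by base change" is exactly your observation that $\sigma$-coinvariants commute with $-\otimes_R\C_\chi$ once one notes $\operatorname{ev}_\chi$ factors through $R_\sigma$. The genuine difference is in the second isomorphism: both you and the paper embed $\mc B^\lambda$ into $R\otimes\operatorname{End}V(\lambda)$ via Lemma~\ref{l:restrofinvtokostant} and then compare $\dim_\C\mc B^\lambda\otimes_R\C_\chi$ with $\dim\mc B^\lambda_\chi$, but the mechanisms for bounding the generic rank $N\le n$ differ. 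The paper argues directly that any $n+1$ elements of $\mc B^\lambda$ must be $R$-linearly dependent because their specializations at a generic $\chi$ span at most $n$ dimensions, and then extracts a nonzero $a\in R$ with $a\mc B^\lambda$ contained in a free rank-$n$ submodule, so inverting $a(\chi)$ gives the isomorphism. You instead lift a simple-spectrum element of $\mc B^\lambda_\chi$, use nonvanishing of its characteristic-polynomial discriminant to conclude it is regular in $M_n(K)$, and bound $\mc B^\lambda\otimes_R K$ by the $n$-dimensional commutant of a regular matrix. Your commutant argument is valid and arguably more conceptual — it explains \emph{why} the rank is $n$ rather than just observing it — at the cost of invoking simple-spectrum (\cite[Corollary 4]{FFR}) as an additional input, whereas the paper's specialization argument is more elementary and stays entirely in the framework of generic fiber dimensions.
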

\begin{proof}
    For the first surjection, fix a set of algebraically independent generators $\mc B=\C[x_1,\ldots,x_m]$ such that $x_1,\ldots,x_l$ are algebraically independent generators of $R$, which is possible by Lemma \ref{l:freepolyovercenters}. Then 
    $$
    \mc B\otimes_R\C_\chi\simeq \C[x_{l+1},\ldots,x_m].
    $$
    On the other hand, the map $\mc B\to \mc B_\chi$ sends $x_{l+1},\ldots,x_m$ to algebraically independent generators of $\mc B_\chi$, as $\mc B_\chi$ has transcendence degree $m-l$ over $\C$ (Theorem \ref{t:freegensofbigalg}). Therefore, this map is an isomorphism, as desired.

    Using Lemma \ref{l:restrofinvtokostant}, embed $\mc B^\lambda$ into $\C[p_{-1}+\g^{p_1}]\otimes \operatorname{End}V(\lambda)$. Since the restriction 
    $$
    R=S(\g)^\g\simeq \C[\g]^\g\to \C[p_{-1}+\g^{p_1}]
    $$
    is an isomorphism (proof of Proposition \ref{p:quotspacesisosfordynkin}), we get an $R$-inclusion into a finite free $R$-algebra
    \begin{equation}\label{eq:bigintokostant}
        \mc B^\lambda\subset R\otimes \operatorname{End}V(\lambda).
    \end{equation}
    
    We know that $\mc B^\lambda_\chi$ has dimension $\dim V(\lambda)$ for generic $\chi$, hence any $\dim V(\lambda)+1$ elements of $\mc B^\lambda$ are linearly dependent over $R$. This implies that the localization $S_0^{-1}\mc B^\lambda$ at $S_0:=R\setminus \{0\}$ has dimension $\dim V(\lambda)$ as a vector space over $S_0^{-1}R$. Therefore, there is a nonzero element $a\in R$ and a free submodule $N\subset\mc B^\lambda$ of rank $\dim V(\lambda)$ such that $a\mc B^\lambda\subset N$. Let $\chi$ be a regular element satisfying (\ref{eq:dimsagree}) such that $a(\chi)\ne 0$ (this is precisely the genericity assumption needed in the lemma). As $a$ becomes a unit $a(\chi)\in \C_\chi$, we get
    $$
    \C^{\dim V(\lambda)}\simeq N\otimes_R \C_\chi=\mc B^\lambda\otimes_R \C_\chi\twoheadrightarrow \mc B^\lambda_\chi\simeq \C^{\dim V(\lambda)},
    $$
    which proves the claim.
    
    The third isomorphism is obtained by base change from the second one. 
\end{proof}

\begin{remark}
{\rm
    Note that the inclusion (\ref{eq:bigintokostant}) doesn't commute with evaluations at all $\chi\in\g\simeq \g^*$. We know that it commutes with evaluations at $\chi\in p_{-1}+\g^{p_1}$, but for $g\in G$ we have $\mc B^\lambda_{g\cdot\chi}=g\cdot \mc B^\lambda_\chi\ne \mc B^\lambda_\chi$, however the evaluations of $S(\g)^\g$ at $\chi$ and $g\cdot \chi$ coincide.
}
\end{remark}

\begin{remark}
{\rm
    According to \cite[Theorem 11]{Ko}, we have an $S(\g)^\g$-module (not an algebra) isomorphism
    $$
    (S(\g)\otimes \operatorname{End}V(\lambda))^\g\simeq S(\g)^\g\otimes \operatorname{End}_\h V(\lambda),
    $$
    so
    $$
    \mc B^\lambda\subset S(\g)^\g\otimes \operatorname{End}_\h V(\lambda)
    $$
    with $\mc B^\lambda_\chi$ being the image of $\mc B^\lambda$ in $\C_\chi\otimes \operatorname{End}_\h V(\lambda)$. After this observation, the proof of the second part of Lemma \ref{l:basechangeofbig} goes similarly without any usage of the Kostant section. 
}
\end{remark}

Now, we are finally prepared to prove Conjecture \ref{conj:isoofbigalg}.
\begin{proof}[Proof of Theorem \ref{t:solvetheconj}]
    Since there is a finite free $R$-module on the right-hand side of (\ref{eq:bigintokostant}), then $\mc B^\lambda$ is a finitely generated torsion-free module over $R$. The same is true for $\mc B^\lambda(\g_\sigma^\vee)$ as an $R_\sigma$-module, hence there exists a free $R_\sigma$-submodule $N\subset \mc B^\lambda(\g_\sigma^\vee)$ and a nonzero element $a\in R_\sigma$ such that $a\mc B^\lambda(\g_\sigma^\vee)\subset N$. Let $M$ be the preimage of $N$ under the map
    $$
\mc B^\lambda(\g^\vee)_\sigma\twoheadrightarrow\mc B^\lambda(\g_\sigma^\vee)
    $$
    and let $I$ be the kernel of this map. Then we have an exact sequence
    $$
    0\to I\cap M\to M\to N\to 0,
    $$
    which splits since $N$ is free. Then for any $\chi'\in (\g_\sigma^\vee)^*$ we have an exact sequence
    \begin{equation}
        0\to (I\cap M)\otimes \C_{\chi'}\to M\otimes_{S(\g_\sigma^\vee)^{\g_\sigma^\vee}}\C_{\chi'}\to N\otimes_{S(\g_\sigma^\vee)^{\g_\sigma^\vee}}\C_{\chi'}\to 0.\label{eq:exactaftertensor}
    \end{equation}

    Choose a generic regular $\chi'$ such that $a(\chi')\ne 0$. Since $a\mc B^\lambda(\g_\sigma^\vee)\subset N$, we have a commutative diagram 
    \begin{center}
        \begin{tikzcd}
N \arrow[d, "a\cdot"'] \arrow[r, hook] & B^\lambda(\g_\sigma^\vee) \arrow[d, "a\cdot"] \arrow[ld, dashed] \\
N \arrow[r, hook]                      & B^\lambda(\g_\sigma^\vee)                                       \fullstopbelow
\end{tikzcd}
    \end{center}
    Then we get a similar diagram after tensoring with $\C_\chi$, but with vertical arrows being multiplications by a nonzero complex number $a(\chi')$. Therefore, all the arrows become isomorphisms, so we get that
    $$
N\otimes_{S(\g_\sigma^\vee)^{\g_\sigma^\vee}}\C_{\chi'}\simeq B^\lambda(\g_\sigma^\vee)\otimes_{S(\g_\sigma^\vee)^{\g_\sigma^\vee}}\C_{\chi'}.
    $$
    Clearly, $a B^\lambda(\g^\vee)\subset M$, hence by the similar argument
$$
M\otimes_{S(\g_\sigma^\vee)^{\g_\sigma^\vee}}\C_{\chi'}\simeq B^\lambda(\g^\vee)_\sigma\otimes_{S(\g_\sigma^\vee)^{\g_\sigma^\vee}}\C_{\chi'}.
    $$
    It follows then from the isomorphism 
    $$
    \mc B_\chi^\lambda(\g^\vee)_\sigma\simeq\mc B_{\chi'}^\lambda(\g_\sigma^\vee)
    $$
    and Lemma \ref{l:basechangeofbig} that the epimorphism in (\ref{eq:exactaftertensor}) is an isomorphism. Therefore, $(I\cap M)\otimes \C_{\chi'}=0$. Let $I_{\chi'}$ be the ideal of $R_\sigma$ generated by $P-P(\chi')$ for all $P$. Then $I_{\chi'} (I\cap M)=I\cap M$. Note that since $\mc B^\lambda(\g^\vee)$ is finite over $R$, then $\mc B^\lambda(\g^\vee)_\sigma$ is finite over $R_\sigma$, hence $I$ is a finitely-generated $R_\sigma$-module. Therefore, the Cayley-Hamilton theorem implies that there exists an element $x\in I_{\chi'}$ such that $1+x$ annihilates $I\cap M$. According to Lemma \ref{l:splitofmaps}, we have a sequence of inclusions
    $$
    I\cap M\subset I\subset \mc B^\lambda(\g^\vee)_\sigma\subset \mc B^\lambda(\g^\vee),
    $$ 
    where the last algebra is torsion-free over $R_\sigma$. Clearly, $x\ne-1$, so we get that $I\cap M=0$. Then $aI\subset I\cap M=0$, so $I$ is annihilated by $a$. By the same argument, $I=0$, which finishes the proof.
\end{proof}

\begin{remark}
    {\rm
    It follows from \cite[Theorem 2.1 (ii)-(iii)]{Ha} that $\mc B^\lambda$ is a free $R$-module of rank $\dim V(\lambda)$. Thus, it is enough to take $N=\mc B^\lambda(\g_\sigma^\vee)$ in the above proof, and Lemma \ref{l:basechangeofbig} is true without the assumption of genericity of $\chi$.
    }
    
\end{remark}
Recall the isomorphisms
$$
\left(\mathrm{Spec}\,\mathcal B_\chi^\lambda(\g^\vee)\right)^\sigma\simeq \left(\mathrm{Op}_\g^{\mathrm{MF}}(\mathbb P^1)_{1,\infty,\pi(-\chi)}^{1,2,\lambda}\right)^\sigma\simeq \mathrm{Op}_{\g_\sigma}^{\mathrm{MF}}(\mathbb P^1)_{1,\infty,\pi(-\chi)}^{1,2,\lambda}\simeq \mathrm{Spec}\,\mathcal B_\chi^\lambda(\g^\vee).
$$
for a generic compatible pair $(\chi,\chi')$ with $\sigma(\chi)=\chi$. The set of joint eigenspaces of $\mathcal B_\chi^\lambda(\g^\vee)$ in  $V(\lambda)$ can be identified with $\mathrm{Spec}\,\mathcal B_\chi^\lambda(\g^\vee)$, hence we have a bijection
$$
\phi: \{\text{eigenvalues on }W(\lambda)\}\xrightarrow{\sim}\{\text{eigenvalues on } V(\lambda)\}^\sigma,
$$
which can be restated as giving a correspondence $\phi$ between eigenvalues such that the triangles

\begin{center}
    \begin{tikzcd}
\mc B_\chi^\lambda(\g^\vee)_\sigma \arrow[r, "\phi(\tau)"] \arrow[d, "\simeq"'] & \C \\
\mathcal B_\chi^\lambda(\g_\sigma^\vee) \arrow[ru, "\tau"']                            &   
\end{tikzcd}
\end{center}
commute.

We finish this section with a new proof of Jantzen's twining formula. I thank Leonid Rybnikov for his help with the argument.

\begin{lemma}\label{l:cycforallreg}
    Let $\chi\in\g$ be a regular element. Then the highest weight vector $v_\lambda$ of $V(\lambda)$ is a cyclic vector for $\mc B_\chi(\g)$.
\end{lemma}
\begin{proof}
    Identifying $\g^*$ with $\g$ via the Killing form, we may restate the proposition for $\chi\in \g$. Clearly, $v_\lambda$ is a cyclic vector for $\mc B_\chi$ if and only if it is a cyclic vector for $g\cdot \mc B_\chi=\mc B_{g\cdot \chi}$ for some $g\in G$, so we may assume that $\chi$ lies in the Kostant section $p_{-1}+\g^{p_1}$.

    Consider the automorphisms $\phi_s:=\exp(s)\cdot \exp(s\rho^\vee)$ of $\g$ for $s\in\R$. Write $\chi=p_{-1}+\tau$, where $\tau\in \g^{p_1}$. Then
    $$
    \phi_s(\chi)=\exp(s)\cdot \exp(s\rho^\vee)\cdot (p_{-1}+\tau)=p_{-1}+\exp(s)\cdot \exp(s\rho^\vee)\tau.
    $$
    It follows from \cite[(0.1.8)]{Ko} (or directly from the theory of $\mathfrak{sl}_2$-modules) that $\tau\in \n_+$, which implies that
    $$
    \lim_{s\to-\infty} \phi_s(\chi)=p_{-1}.
    $$
    
    Following the proofs of Corollary 2 and Corollary 3 in \cite{FFR}, we see that $v_\lambda$ is a cyclic vector for $\mc B_\chi$ for any $\chi$ from an open neighborhood of $p_{-1}$. Thus, there exists $s\in\R$ such that the statement is satisfied for $\chi':=\phi_s(\chi)\in p_{-1}+\g^{p_1}$. We see that
    $$
    \mc B_{\chi}=\mc B_{\phi_{-s}(\chi')}=\mc B_{\exp(-s\rho^\vee)\cdot\chi'}=\exp(-s\rho^\vee)\cdot\mc B_{\chi'},
    $$
    where the equality in the middle is the application of the general equality $\mc B_{t\cdot\chi}=\mc B_\chi$, which follows from Proposition \ref{prop:genpropofgaudin}(ii). Therefore, the statement is satisfied for $\mc B_\chi$, and we are done.
\end{proof}

Recall from Example \ref{e:imageofcasimir} that after identifying $\g^\vee\simeq (\g^\vee)^*$ via an invariant bilinear form, we get $\chi\in \mc B_\chi(\g^\vee)$. If we change the form by rescaling, then $\chi$ will be rescaled in the same way.
\begin{lemma}\label{l:imageofchi}
    Let $(\chi,\chi')$ be a compatible pair. Then the map
    $$
    \mc B_\chi(\g^\vee)\twoheadrightarrow\mc B_{\chi'}(\g_\sigma^\vee)
    $$
    sends $\chi$ to $\chi'$, where the identifications are made using the critical level forms on $\g^\vee$ and $\g_\sigma^\vee$. The same statement holds for Killing forms.
\end{lemma}
\begin{proof}
    Let $S_1\in \mathfrak z(\widehat{\g^\vee})$ and $S_1'\in \mathfrak z(\widehat{\g_\sigma^\vee})$ be as in Example \ref{e:imageofcasimir}, and let $\bar S_1$ and $\bar S_1'$be their images in $\mc B(\g^\vee)$ and $\mc B(\g_\sigma^\vee)$, respectively. 
    
    According to Proposition \ref{p:ffcentercompatibility}, $S_1$ is mapped to $cS_1'$, where $c$ is some nonzero complex number. Moreover, the same proposition tells that the graded pieces of $\bar S_1$ are mapped to $c$ times the corresponding graded pieces of $\bar S_1'$.
    
    Recall that the $0$-graded piece of $S_1$ equals $\sum X_iX^i\otimes 1\in Z(\g^\vee)\otimes 1\subset \mc B(\g^\vee)$. According to Theorem \ref{t:bigalgebrasiso}(ii), this element acts on each $V(\lambda)$ ($\sigma(\lambda)=\lambda$) by the same scalar as its image in $\mc B(\g^\vee)$ acts on $W(\lambda)$. Take $\lambda$ to be the highest weight of the adjoint representation. If the identifying forms are the Killing forms, then $\sum X_iX^i$ acts by $1$. Therefore, its image acts by $1$ as well, implying that $c=1$. Since the critical level forms equal $-1/2$ times the Killing forms, $c=1$ in this case too.
    
    This implies that the middle graded piece of $\bar S_1$ maps to the middle graded piece of $\bar S_1'$. But $\chi$ and $\chi'$ are the evaluations of the middle graded pieces at $\chi$ and $\chi'$, respectively. Since $(\chi,\chi')$ is a compatible pair, we are done.
\end{proof}
\begin{proof}[Proof of Theorem \ref{t:Jantzen}]
    
    Let $v_1,\ldots,v_n$ be a $\sigma$-equivariant basis of eigenvectors of $\mc B_\chi^\lambda(\g^\vee)$ and $x_1,\ldots,x_n$ be the corresponding idempotents in $\mc B_\chi^\lambda(\g^\vee)$. Let $a$ be the number of fixed $v_i$ and $b$ be the number of $v_i$ which are $\sigma$-eigenvectors with eigenvalue different from $1$. Note that 
    $$
        x_i(x_i-\sigma(x_i))=
    \begin{cases}
    cx_i,&\sigma(x_i)\ne x_i, c\in \C^*,\\
    0,&\sigma(x_i)= x_i,
    \end{cases}
    $$
    which implies that all $x_i$ such that $\sigma(x_i)\ne x_i$ are zero in $\mc B_\chi^\lambda(\g^\vee)_\sigma$. Thus, 
    $
    \dim \mc B_\chi^\lambda(\g^\vee)_\sigma=a.
    $
    We claim that $b=0$. This would imply that $\sigma$ viewed as a matrix in the basis $v_i$ has $1$ at the diagonal elements corresponding to fixed $v_i$-s and $0$ at all other diagonal elements, hence the epimorphism
    $$
    \mc B_\chi^\lambda(\g^\vee)\to\mc B_\chi^\lambda(\g_\sigma^\vee)
    $$
    would restrict to an isomorphism
    $$
    \operatorname{span}\{x_i:\sigma(x_i)=x_i\}\stackrel{\sim}{\to}\mc B_\chi^\lambda(\g_\sigma^\vee)
    $$
    and would send all $x_i$ with $\sigma(x_i)\ne x_i$ to zero. 
    In particular, if we assume that $v_1,\ldots, v_k$ is a basis of $V_\mu(\lambda)$ for a $\sigma$-invariant $\mu$, then
    \begin{equation}\label{eq:traceofweightspaces}
        \operatorname{tr}(\sigma|V_\mu(\lambda))=\#\{i=1\ldots k:\sigma(x_i)=x_i\}=\dim \sum_{i=1}^kx_i\mc B_\chi^\lambda(\g_\sigma^\vee).
    \end{equation}
    
    It follows from Lemma \ref{l:cycforallreg} that the highest vector $v_\lambda$ of $V(\lambda)$ is a cyclic vector for $\mc B_\chi^\lambda(\g^\vee)$. In addition, it is $\sigma$-invariant. Now, write $v_\lambda=\sum a_iv_i$. Since it is cyclic for $\mc B_\chi^\lambda(\g^\vee)$, we have $a_i\ne 0$ for every $i$. However, if there existed $j$ such that $\sigma(v_j)=\lambda v_j$ with $\lambda\ne 1$, then we would have
    $$
    \sum a_iv_i=v_\lambda=\sigma(v_\lambda)=\sum a_i\sigma(v_i),
    $$
    therefore $a_j=\lambda a_j$, a contradiction to $a_j\ne 0$. Therefore, $b=0$, and we are done with this part.

    Now, we prove the rest of the theorem. Choose a $\sigma$-invariant regular $\chi\in U\cap (\h_\sigma^\vee)^*$ such that $\chi$ `separates' the $\sigma$-orbits in weights of $V(\lambda)$, i.e. for any weights $\mu_1,\mu_2$ of $V(\lambda)$ not lying in the same $\sigma$-orbit, we have $\chi(\mu_1)\ne \chi(\mu_2)$. This is possible since there are only finitely many weights in $V(\lambda)$ and $U$ is open and dense in $(\g_\sigma^\vee)^*$. We assume that the basis $v_1,\ldots,v_n$ is chosen in such a way that $v_i$ are weight vectors in $V(\lambda)$.
    
    Let now $\mu$ be a $\sigma$-invariant weight and let $r:=\chi(\mu)$. Without loss of generality, assume that vectors $v_1,\ldots, v_k$ form a basis of $\ker(\chi-r)$. According to Lemma \ref{l:imageofchi}, $\chi$ corresponds to $\chi'\in \mc B_{\chi'}(\g_\sigma^\vee)$, which along with (\ref{eq:traceofweightspaces}) implies that
    \begin{align*}
        \operatorname{tr}(\sigma|V_\mu(\lambda))&=\#\{i=1\ldots k:\sigma(v_i)=v_i\}\\
        &=\#\{i=1\ldots n:\sigma(v_i)=v_i, v_i\in \ker(\chi-a)\}\\
        &=\dim\ker(\chi-r)\cdot B_{\chi'}(\g_\sigma^\vee)\\
        &=\dim\ker(\chi'-r)\\
        &=\dim W_\mu(\lambda).
    \end{align*}
    In these lines, by $\chi-r$ and $\chi'-r$ we mean the left multiplications by these elements, and the second and the last equalities follow from our assumption that $\chi$ separates $\sigma$-orbits of weights in $V(\lambda)$. So, we are done.
\end{proof}

\section*{Acknowledgments}
 The author is grateful to Tam{\'a}s Hausel for introducing him to the problem and Tam{\'a}s Hausel, Jakub L{\"o}wit, Nhok Tkhai Shon Ngo, Leonid Rybnikov and Oksana Yakimova for fruitful discussions.

\medskip
\noindent
{\bfseries Funding:} The research was conducted during the Scientific Internship in 2023 at the Institute of Science and Technology Austria. The author worked on related topics during the ISTernsip Summer Program in 2022 at the Institute of Science and Technology Austria. The author was supported by the Inicjatywa Doskonałości - Uniwersytet Jagielloński science scholarship for master students. The author was partially supported by the NCN grant SONATA NCN UMO-2021/43/D/ST1/02290.
\section*{Declarations}
\noindent
{\bfseries Conflict of Interest:} The author has no conflict of interest to declare that is relevant to this article.
\bibliographystyle{alphaurl}
\nocite{*}
\bibliography{references}

\end{document}